\documentclass[11pt]{article}

\usepackage{geometry}
\usepackage{latexsym}
\usepackage{mathrsfs}
\usepackage{amsfonts}
\usepackage{amssymb}
\usepackage{subfigure}    %
\usepackage{graphicx}
\usepackage{tikz}
\usetikzlibrary{arrows.meta,positioning,calc,fit,backgrounds}
\usepackage{epstopdf}
\usepackage{float}
\usepackage{multirow}
\usepackage{booktabs}
\usepackage{longtable}
\usepackage{threeparttable}
\usepackage{bm}
\usepackage{amsmath}
\usepackage{amsthm}
\usepackage{color}
\usepackage[subnum]{cases}
\usepackage{rotating}
\usepackage{enumitem}
\usepackage{accents}
\usepackage{algorithm}
\usepackage{algorithmic}
\usepackage{mathtools}
\usepackage{caption}
\usepackage{url}
\usepackage[numbers]{natbib}
\makeatletter
\@addtoreset{equation}{section}
\makeatother

\makeatletter
\def \wideubar{\underaccent{{\cc@style\underline{\mskip15mu}}}}
\def \widebar{\accentset{{\cc@style\underline{\mskip10mu}}}}
\makeatother

\definecolor{lightgreen}{rgb}{0.1,0.5,0.1}
\usepackage[colorlinks=true,
breaklinks=true,
bookmarks=true,
urlcolor=black,
citecolor=lightgreen,
linkcolor=lightgreen,
bookmarksopen=false,
draft=false]{hyperref}

\graphicspath{{images/}}

\newcommand{\R}{\mathbb{R}}
\newcommand{\cX}{\mathcal{X}}
\newcommand{\cY}{\mathcal{Y}}

\newcommand{\tb}[1]{\textbf{#1}}
\newcommand{\coleq}{\coloneqq}
\newcommand{\inprod}[2]{\langle#1,\,#2\rangle}
\DeclareMathOperator*{\argmin}{argmin}

\DeclareMathOperator{\Diag}{Diag}

\definecolor{otclarification}{rgb}{0,0,0}
\newcommand{\otclarify}[1]{\textcolor{otclarification}{#1}}

\begin{document}

\newtheorem{property}{Property}[section]
\newtheorem{proposition}{Proposition}[section]
\newtheorem{definition}{Definition}[section]
\newtheorem{lemma}{Lemma}[section]
\newtheorem{corollary}{Corollary}[section]
\newtheorem{theorem}{Theorem}[section]
\newtheorem{remark}{Remark}[section]
\newtheorem{problem}{Problem}[section]
\newtheorem{example}{Example}[section]
\newtheorem{assumption}{Assumption}
\renewcommand*{\theassumption}{\Alph{assumption}}

\title{A Logarithmic-dual-update Bregman ADMM   for Optimal Transport}
\author{
Di Hou\thanks{Department of Mathematics, National
University of Singapore, Singapore 119076 ({\tt dihou@u.nus.edu}).}
\and
Kim-Chuan Toh\thanks{Department of Mathematics, National
University of Singapore, Singapore 119076 ({\tt mattohkc@nus.edu.sg}).}
}
\date{}\maketitle
\begin{abstract}
{
This paper studies an entropy Bregman alternating direction method of
multipliers (BADMM) with logarithmic dual update for solving discrete
optimal transport (OT) problems. The algorithm uses simple row and
column normalizations without inner iterations, allowing efficient
CPU and GPU implementations. However, the convergence of this method for general OT problems
has not been established. Starting from a strictly positive feasible
primal point and a zero multiplier, here we establish the $R$-linear convergence
of the primal--dual sequence for general OT costs with any fixed
multiplier stepsize in $(0,2)$ and a sufficiently large fixed penalty,
even when the primal limit lies on the boundary. Our analysis further
characterizes the primal limit as the entropy Bregman projection of the
initial point onto the optimal solution set and establishes strict
complementarity of the limiting OT primal--dual solution. For separable
costs on two-dimensional grids, we exploit the separable structure  to
accelerate matrix--vector products and reduce storage requirements.
Numerical experiments show competitive performance of our BADMM for general cost
matrices with substantial speedups over state-of-the-art solvers, including
HOT and HALO, for squared Euclidean costs on two-dimensional-grid marginals.
The largest tested problem, with $2048\times2048 \approx 4.19$M grid points in each
marginal, is solved in approximately 100 seconds on a single GPU.
}
\end{abstract}
\section{Introduction}
\subsection{Optimal transport}
We consider the optimal transport (OT) problem
\begin{equation}\label{eq:gw}
\min\{\inprod CP\mid P\in\Pi(a,b)\},\qquad
\Pi(a,b)\coleq\{P\geq0\mid Pe_n=a,\ P^\top e_m=b\},
\end{equation}
where $C\in\R^{m\times n}$ is the cost matrix, $a\in\R^m$ and $b\in\R^n$
are positive vectors with sums one, and $m,n\geq2$.
We write $\Pi\coleq\Pi(a,b)$ and use $e_d$ for the all-ones vector in $\R^d$.
OT compares probability distributions by minimizing a linear cost
subject to prescribed row and column sums. OT and its regularized variants
have applications in image retrieval~\cite{rubner2000earth},
document comparison~\cite{kusner2015word},
domain adaptation~\cite{courty2017domain}, and
geometric data processing~\cite{solomon2015convolutional}.
For more applications, see~\cite{peyre2019computational}.
These applications motivate efficient algorithms for large transport
problems. As a linear programming (LP) problem, OT can be solved
by general-purpose LP solvers~\cite{zanetti2025hipo}
or specialized network simplex implementations such as POT~\cite{POT}.
Further approaches for unregularized OT and general LPs include Bregman
proximal point methods~\cite{yang2022bregman,chu2023ieppa,xie2020ipot},
splitting and primal--dual methods~\cite{mai2022drot,lu2024pdot,chen2025hprlp,applegate2025pdlp,zhang2025hot},
Newton methods~\cite{li2020snipal,hou2024sparse}, and multiscale
interior-point methods~\cite{sun2026multiscale}.
Recent solvers further exploit GPU parallelism~\cite{wang2014bregman,lu2025cupdlpx}
and hierarchical problem reduction~\cite{xia2026halo,xia2026hello}.

Entropic regularization offers a computationally attractive
alternative through the simple row and column scalings of the
Sinkhorn--Knopp algorithm~\cite{sinkhorn1967concerning,cuturi2013sinkhorn}.
However, a fixed positive regularization parameter generally changes the optimal
solution and its transport cost~\cite{weed2018entropic}, while small
parameters can cause numerical overflow or underflow and slow convergence
in direct scaling implementations. Log-domain stabilization alleviates
these numerical difficulties~\cite{schmitzer2019stabilized} but does not
remove the regularization bias. This motivates methods that solve
the original problem while retaining the simple computations associated
with entropy.

To avoid regularization bias, one can use entropy as a Bregman proximal term instead of
adding a fixed regularization term to the objective
\cite{teboulle1992entropic,chen1993proximal,yang2022bregman,xie2020ipot}.
The resulting Bregman proximal point algorithm targets the original OT
problem without requiring the proximal parameter to tend to zero.
Equivalently, it applies the exponential method of multipliers to the
OT dual problem~\cite[Section~2]{tseng1993exponential}.
However, each proximal step requires an entropy projection
satisfying both marginal constraints, which generally involves an inner
loop of row and column normalizations.
\subsection{Algorithmic framework}\label{sec:ot-framework}
To retain simple normalization steps without inner iterations,
we consider a Bregman alternating direction method of multipliers (ADMM)
with a logarithmic multiplier update.
To separate the two marginal constraints, define
\begin{equation*}
\cX\coleq\{X\geq0\mid Xe_n=a\},\qquad
\cY\coleq\{Y\geq0\mid Y^\top e_m=b\}.
\end{equation*}
Introducing two primal variables $X,Y$ and imposing $X-Y=0$, we
rewrite problem~\eqref{eq:gw} equivalently as
\begin{equation}\label{eq:ot-split}
\min\{\inprod CX\mid X\in\cX,\ Y\in\cY,\ X-Y=0\}.
\end{equation}
Let $H(P)\coleq\sum_{i,j}P_{ij}(\log P_{ij}-1)$ denote the negative entropy,
with the convention $0\log0=0$.
For $P\geq0$ and $W>0$, the Bregman distance induced by $H$ is defined by
\begin{equation*}
D_H(P;W)\coleq H(P)-H(W)-\inprod{\nabla H(W)}{P-W}
=\sum_{i,j}\left(P_{ij}\log\frac{P_{ij}}{W_{ij}}-P_{ij}+W_{ij}\right),
\end{equation*}
where $\nabla H(W)=\log W$. We use the same notation $D_H$ for the
corresponding Bregman distance between vectors.
Let $Z\in\R^{m\times n}$ be the dual variable associated with $X-Y=0$.
We use a Bregman ADMM (BADMM) with logarithmic dual update:
each primal update minimizes the Lagrangian over one marginal constraint
set, with an entropy Bregman penalty relative to the other primal variable.
Given $Y^0>0$ with $(Y^0)^\top e_m=b$ and $Z^0\in\R^{m\times n}$,
fix a penalty $\rho>0$ and multiplier stepsize $s>0$.
For $k\geq0$, compute
\begin{equation}\label{eq:ot-bregman-updates}
\begin{aligned}
X^{k+1}&=\argmin\{\inprod{C+Z^k}X+\rho D_H(X;Y^k)\mid X\in\cX\},\\
Y^{k+1}&=\argmin\{-\inprod{Z^k}Y+\rho D_H(Y;X^{k+1})\mid Y\in\cY\},\\
Z^{k+1}&=Z^k+s\rho(\log X^{k+1}-\log Y^{k+1}).
\end{aligned}
\end{equation}
Since $\nabla H(W)=\log W$, the multiplier correction is the difference
of the entropy gradients at $X^{k+1}$ and $Y^{k+1}$.
The two primal subproblems have explicit solutions obtained by row
and column normalization, respectively, so no inner iterations are needed.
Section~\ref{sec:ot-algorithm} gives the full construction and uses
a cyclic reindexing of these updates for the analysis.

The simple updates permit efficient computation in the following two ways.
\begin{enumerate}[leftmargin=*,label=(\roman*)]
\item When we choose the initial multiplier to be $Z^0=0$, the iteration
admits a three-vector implementation. Each sweep requires two
matrix--vector products and entrywise operations, without forming the
subsequent primal or multiplier matrices.
The primal and multiplier iterates are therefore represented by
only $\mathcal{O}(m+n)$ vector entries. The matrix-vector products and entrywise operations
are amenable to parallel computation on CPUs and GPUs.
Section~\ref{sec:ot-fast-computation} gives the complete vector recurrence
and its relation to Algorithm~\ref{alg:ot}.

\item For a product initial point and separable costs on product grids,
the matrix-vector products can be evaluated by successive products along
each grid coordinate. On two $L\times L$ grids, this reduces the cost of
each matrix-vector product from $\mathcal{O}(L^4)$ to $\mathcal{O}(L^3)$ and OT matrix storage from
$\mathcal{O}(L^4)$ to $\mathcal{O}(L^2)$. Thus, the same iteration can handle larger grid
problems without storing the full matrix.
Section~\ref{sec:ot-structured-products} develops this implementation
using the separable cost structure~\cite{peyre2019computational}.
\end{enumerate}

Related variants of this Bregman ADMM framework have been studied.
For example, Wang and Banerjee~\cite{wang2014bregman} apply Bregman ADMM
to OT with a linear multiplier update, while Khoo, Tang and
Toh~\cite{khoo2025bregman} use the same logarithmic multiplier update
for the Bethe variational problem. The Bethe analysis relies on uniform
positive lower bounds on the iterates, whereas our OT limits may lie on
the boundary. For $s=1$, $Z^0=0$ and matching starting points and
iteration indices, our iteration admits an equivalent entropy Bregman
Douglas--Rachford splitting (BDRS) formulation~\cite[Theorem~3.3 and Eq.~(5.12)]{ma2025bdrs}.
The convergence analysis in~\cite{ma2025bdrs} does not cover OT.
Ma~\cite{ma2026scaling} subsequently established convergence for matrix
scaling under a projective contraction condition, while counterexamples
in~\cite{ma2026failure} rule out unconditional convergence.
These results motivate identifying conditions that guarantee convergence
for general OT costs.

Establishing such convergence guarantees requires controlling the marginal
residuals and the logarithmic multiplier updates. This is particularly
delicate when the iterates approach the boundary, where the entropy
gradient is singular.
We provide convergence guarantees for general OT data:
from $X^0\in\operatorname{ri}\Pi$ and $Z^0=0$, the primal--dual sequence
has finite length and converges $R$-linearly for any fixed $s\in(0,2)$ and
a sufficiently large fixed $\rho$ depending on $(C,a,b,X^0,s)$.
We further identify the primal limit as the entropy Bregman projection
of the initial point $X^0$ onto the optimal solution set and prove strict
complementarity of the OT primal--dual solution determined by the limit.

\subsection{Contributions}
The main contributions of this paper are summarized as follows.
\begin{enumerate}[leftmargin=*]
\item We develop an entropy Bregman ADMM with logarithmic dual
update and a multiplier stepsize $s\in(0,2)$ for discrete OT problems.
The method admits efficient CPU and GPU implementations based on simple
row and column normalizations. For separable costs on product grids, we
further exploit the separable structure to accelerate the matrix--vector
products and reduce storage requirements.

\item We establish finite length and convergence of the
primal--dual sequence for general OT data. Starting from a strictly
positive feasible primal point and a zero multiplier, the result holds
for any fixed $s\in(0,2)$ and a sufficiently large fixed penalty depending
on the data, the initial point and $s$. Moreover, we characterize the
primal limit as the entropy Bregman projection of the initial point onto
the optimal solution set.
Thus, BADMM converges to the same primal
solution as the exact entropy Bregman proximal point algorithm started
from the same point, even when optimal solutions lie on the boundary.

\item We further prove that the OT primal--dual solution determined
by the limit is strictly complementary. Under the same conditions, we
establish $R$-linear convergence of the primal--dual sequence without
requiring a unique OT optimal solution.

\item We conduct extensive numerical experiments and demonstrate
competitive CPU and GPU performance of our BADMM for general cost matrices. For squared Euclidean costs on
two-dimensional grids, our GPU implementation outperforms the
state-of-the-art solvers HALO and HELLO. It solves the largest
$2048\times2048$ DOMDEC problem in under 100 seconds, about
24 times faster than HALO~\cite{xia2026halo}, and is approximately
84 times faster on average than HELLO~\cite{xia2026hello} on the
$512\times512$ DOTmark problems.
\end{enumerate}

As an additional side contribution, in Appendix~\ref{sec:ot-support-identification}, we derive a computable primal--dual gap test that certifies
variables vanishing at every optimal solution.
Under the preceding convergence conditions, the test identifies all
such variables in finitely many iterations, even though the iterates
themselves remain strictly positive. Fixing the certified variables
preserves the entire optimal solution set and permits subsequent
reduced computation on the retained variables.

\subsection{Organization}
The rest of the paper is organized as follows.
Section~\ref{sec:ot-badmm} introduces entropy projections and Bregman
proximal steps, derives the algorithm, and establishes the iteration
properties used in the analysis.
Section~\ref{sec:ot-convergence} proves convergence and the linear rate.
Section~\ref{sec:ot-implementation} presents implementation techniques.
Numerical experiments are reported in Section~\ref{sec:ot-numerical},
followed by concluding remarks in Section~\ref{sec:ot-conclusion}.
Appendix~\ref{sec:ot-support-identification} gives the support-identification
rule and its finite-identification guarantee, and
Appendix~\ref{app:ot-computational-details} reports detailed computational results.

\subsection{Notation}
We use the following notation throughout the paper.
\begin{itemize}[leftmargin=0pt,itemsep=5pt,topsep=5pt]
\item[] \tb{Matrices and elementwise operations}:
For a matrix $U$, the inequalities $U\geq0$ and $U>0$ are understood
entrywise, and $U_{\cdot j}$ denotes column $j$.
For arrays of the same size, $(U\odot V)_{ij}\coleq U_{ij}V_{ij}$ and
$(U\oslash V)_{ij}\coleq U_{ij}/V_{ij}$, with nonzero denominators.
Entrywise quotients may also be written as $U/V$.
Logarithms, exponentials and powers are entrywise, so that
$(\exp U)_{ij}=e^{U_{ij}}$ and $(U^{\odot p})_{ij}\coleq U_{ij}^{p}$ for $U>0$.
The same conventions apply to vectors.
For a vector $v$, $\Diag(v)$ is the diagonal matrix with diagonal $v$.

\item[] \tb{Inner products and norms}:
On $\R^{m\times n}$, the Frobenius inner product is
$\inprod UV\coleq\operatorname{tr}(U^\top V)$, with induced norm
$\|\cdot\|_F$.
An unsubscripted norm is Euclidean for vectors and Frobenius for matrices.
The norms $\|U\|_1$ and $\|U\|_\infty$ denote the sum and maximum of the
absolute entry values, respectively.
For $x,y\in\R^m$, we write $\langle x,y\rangle_a\coleq x^\top\Diag(a)y$
and $\|x\|_a\coleq\sqrt{\langle x,x\rangle_a}$.
The weighted inner product $\langle\cdot,\cdot\rangle_b$ and norm
$\|\cdot\|_b$ are defined analogously on $\R^n$.

\item[] \tb{Sets and supports}:
The sets $\R_+^n$ and $\R_{++}^n$ consist of vectors in $\R^n$ with
nonnegative and positive entries, respectively.
For a set $S$, $\operatorname{ri}S$ denotes its relative interior.
For $P\geq0$, $\operatorname{supp}P\coleq\{(i,j)\mid P_{ij}>0\}$.
For an index set $S$, $S^c$ is its complement in the full matrix index
set, and $U|_S$ denotes the entries of $U$ indexed by $S$.

\item[] \tb{Marginal normalizations}:
For $W\in\R^{m\times n}_{++}$ with positive row and column sums, define
\begin{eqnarray*}
\begin{array}{lllll}
\mathcal R_a(W)&\coleq&\Diag\big(a\oslash(We_n)\big)W
&=&\mbox{argmin}\Big\{ D_H(Q;W) \mid Q e_n = a, Q\geq 0 \Big\}
\\[5pt]
\mathcal C_b(W) &\coleq& W\Diag\big(b\oslash(W^\top e_m)\big)
&=&\mbox{argmin}\Big\{ D_H(Q;W) \mid Q^\top e_m = b, Q\geq 0  \Big\}.
\end{array}
\end{eqnarray*}
These are the entropy projections onto $\cX$ and $\cY$, respectively.
\end{itemize}

\section{Bregman ADMM}\label{sec:ot-badmm}
We first recall the optimality conditions and entropy projection
properties for OT. We then introduce Bregman ADMM and establish the
identities used in its convergence analysis.

\subsection{Optimality conditions and optimal solutions}\label{sec:ot-optimality}
We first state the KKT conditions for the OT problem~\eqref{eq:gw}.
Let $P$ denote the primal variable, and let $\alpha\in\R^m$ and
$\beta\in\R^n$ denote the dual variables
associated with $Pe_n=a$ and $P^\top e_m=b$, respectively.
\begin{definition}[KKT solution and strict complementarity]
A primal--dual point $(P,\alpha,\beta)\in\Pi\times\R^m\times\R^n$ is a KKT solution
of~\eqref{eq:gw} if its dual slack satisfies
\begin{equation}\label{eq:gw-certificate}
\Gamma\coleq C-\alpha e_n^\top-e_m\beta^\top\geq0,\qquad P\odot\Gamma=0.
\end{equation}
A KKT solution is strictly complementary if $\Gamma_{ij}>0$ outside
$\operatorname{supp}P$.
\end{definition}

Let $\mathcal O$ denote the optimal solution set and $S$ the union of
the supports of all optimal solutions:
\begin{equation*}
\mathcal O\coleq\argmin\{\langle C,Q\rangle\mid Q\in\Pi\},\qquad
S\coleq\bigcup_{Q\in\mathcal O}\operatorname{supp}Q.
\end{equation*}
The following characterization follows from the Goldman--Tucker strict
complementarity theorem~\cite{goldman1956theory} and the standard
characterization of the relative interior of a polyhedron.
\begin{proposition}[Optimal solutions and strict complementarity]\label{prop:ot-optimal-support}
We have
\begin{equation*}
\mathcal O=\{Q\in\Pi:Q|_{S^c}=0\},\qquad
\operatorname{ri}\mathcal O=\{Q\in\mathcal O:Q_{ij}>0\text{ on }S\}.
\end{equation*}
There is a dual optimal solution $(\alpha_*,\beta_*)$ whose slack
$\Gamma$ in~\eqref{eq:gw-certificate} satisfies
\begin{equation}\label{eq:ot-strict-certificate}
\Gamma_{ij}=0\quad\Longleftrightarrow\quad(i,j)\in S.
\end{equation}
\end{proposition}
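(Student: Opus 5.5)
Our plan is to first establish the strict complementarity certificate~\eqref{eq:ot-strict-certificate} via Goldman--Tucker, and then deduce the two descriptions of $\mathcal O$ and $\operatorname{ri}\mathcal O$ from it.

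\emph{Step 1: construct a point of $\mathcal O$ with support exactly $S$.} Since $\Pi$ is a nonempty compact polytope (it contains $ab^\top$), the set $\mathcal O$ is a nonempty compact convex polytope. For each $(i,j)\in S$ we pick $Q^{(ij)}\in\mathcal O$ with $Q^{(ij)}_{ij}>0$. The average $\bar Q$ of these finitely many matrices lies in $\mathcal O$ by convexity, and by nonnegativity $\operatorname{supp}\bar Q=S$.

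\emph{Step 2: obtain the strict certificate.} The OT problem is an LP in standard form with constraint matrix $A$ mapping $P\mapsto(Pe_n,P^\top e_m)$ and dual $\max\{a^\top\alpha+b^\top\beta\mid C-\alpha e_n^\top-e_m\beta^\top\geq0\}$. Since the primal is feasible and bounded, strong duality holds. The Goldman--Tucker theorem then gives a strictly complementary optimal pair $(P^\sharp,\alpha_*,\beta_*)$: $P^\sharp\odot\Gamma=0$ and $P^\sharp+\Gamma>0$.

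For any optimal $Q$, complementary slackness with the dual optimum $(\alpha_*,\beta_*)$ gives $Q\odot\Gamma=0$. Hence $\Gamma=0$ on $S$. Conversely, if $\Gamma_{ij}=0$ then $P^\sharp_{ij}>0$, so $(i,j)\in S$. This proves~\eqref{eq:ot-strict-certificate}.

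\emph{Step 3: characterize $\mathcal O$.} One inclusion is clear: every $Q\in\mathcal O$ vanishes on $S^c$ by definition of $S$. Conversely, take $Q\in\Pi$ with $Q|_{S^c}=0$. Then $Q\odot\Gamma=0$ because $\Gamma$ vanishes on $S$. The identity $\langle C,Q\rangle=a^\top\alpha_*+b^\top\beta_*+\langle\Gamma,Q\rangle=a^\top\alpha_*+b^\top\beta_*$ shows that $Q$ attains the dual optimal value, so $Q\in\mathcal O$. Thus $\mathcal O=\Pi\cap\{Q\mid Q|_{S^c}=0\}$, which is a polyhedron described by the affine constraints $Qe_n=a$, $Q^\top e_m=b$, $Q|_{S^c}=0$ and the inequalities $Q_{ij}\ge0$ for $(i,j)\in S$.

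\emph{Step 4: characterize $\operatorname{ri}\mathcal O$.} We invoke the standard fact that the relative interior of a polyhedron $\{x\mid Ex=f,\ x_I\geq0\}$ is obtained by making strict exactly those inequalities that are not implicit equalities. An inequality $Q_{ij}\ge0$, $(i,j)\in S$, would be an implicit equality only if $Q_{ij}=0$ on all of $\mathcal O$. This is ruled out by $\bar Q$, which is positive on $S$. Therefore $\operatorname{ri}\mathcal O=\{Q\in\mathcal O\mid Q_{ij}>0\text{ on }S\}$.

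If one prefers to verify this directly, it is elementary. Given $Q$ positive on $S$ and any $Q'\in\mathcal O$, the point $Q+t(Q-Q')$ stays in $\mathcal O$ for small $t>0$, since it satisfies the affine constraints and remains positive on $S$. Hence $Q\in\operatorname{ri}\mathcal O$. Conversely, if $Q\in\operatorname{ri}\mathcal O$, we extend the segment from $\bar Q$ through $Q$ slightly beyond $Q$ within $\mathcal O$. This writes $Q$ as a proper convex combination of $\bar Q$ and another point of $\mathcal O$, and since $\bar Q$ is positive on $S$, so is $Q$.

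\emph{Main obstacle.} The only nontrivial ingredient is Goldman--Tucker in Step 2, which we cite. The rest is bookkeeping, the main care being that Step 3 uses the specific dual $(\alpha_*,\beta_*)$ together with strong duality.
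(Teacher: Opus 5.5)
Your proof is correct and takes the same route as the paper, which gives no detailed argument and attributes the statement to the Goldman--Tucker theorem and the standard description of the relative interior of a polyhedron; your Steps 2--4 carry out exactly that. As a small simplification, the strictly complementary primal solution $P^\sharp$ from Step 2 already has support exactly $S$, so it can replace the averaged matrix $\bar Q$ and Step 1 can be dropped.
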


We use the connected components of the bipartite support graph
to group the row and column indices in the convergence analysis.
\begin{definition}[Bipartite support graph]\label{def:ot-support-graph}
For $P\in\Pi$, its bipartite support graph has one vertex for each row
and one for each column, with an edge between row $i$ and column $j$
exactly when $P_{ij}>0$. Row and column vertices are distinct even if
their indices coincide. A connected component is a maximal set of
vertices in which any two vertices are joined by a path of edges.
Let $I_d$ and $J_d$ denote the row and column index sets of its $d$th
connected component.
\end{definition}
Since $a,b>0$, no vertex is isolated, so each connected component
contains both row and column vertices. Thus $\{I_d\}$ and $\{J_d\}$
partition the respective index sets, and $P_{I_d,J_e}=0$ for $d\ne e$.

\begin{lemma}[Equality of row and column marginal sums]\label{lem:ot-marginal-sums}
Let $P\in\Pi(a,b)$, $I\subseteq\{1,\ldots,m\}$, and
$J\subseteq\{1,\ldots,n\}$, with complements taken in the respective
index sets. If $P_{ij}=0$ whenever
$(i,j)\in(I\times J^c)\cup(I^c\times J)$, then
\begin{equation*}
\sum_{i\in I}a_i=\sum_{j\in J}b_j.
\end{equation*}
\end{lemma}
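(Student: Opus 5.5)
The plan is to compute the total mass of the block $P_{I,J}$ in two ways, once through row sums and once through column sums, and then use the vanishing hypotheses to drop the off-diagonal blocks. No earlier result is needed beyond the marginal constraints $Pe_n=a$ and $P^\top e_m=b$ that define $\Pi(a,b)$.

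\emph{Row sums.} For each $i\in I$, the row constraint gives
\begin{equation*}
a_i=\sum_{j}P_{ij}=\sum_{j\in J}P_{ij}+\sum_{j\in J^c}P_{ij}.
\end{equation*}
The second sum vanishes because $(i,j)\in I\times J^c$. Summing over $i\in I$ then yields
\begin{equation*}
\sum_{i\in I}a_i=\sum_{i\in I}\sum_{j\in J}P_{ij}.
\end{equation*}

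\emph{Column sums.} For each $j\in J$, the column constraint gives
\begin{equation*}
b_j=\sum_{i\in I}P_{ij}+\sum_{i\in I^c}P_{ij}=\sum_{i\in I}P_{ij},
\end{equation*}
since $P_{ij}=0$ on $I^c\times J$. Summing over $j\in J$ gives
\begin{equation*}
\sum_{j\in J}b_j=\sum_{i\in I}\sum_{j\in J}P_{ij}.
\end{equation*}

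Comparing the two expressions for the block mass $\sum_{i\in I}\sum_{j\in J}P_{ij}$ gives the claimed equality. There is no real obstacle. The only point to check is that empty $I$ or $J$ causes no trouble: if $I=\emptyset$, both displayed identities still hold with the convention that empty sums are zero, and the same is true if $J=\emptyset$. So the argument needs no special cases. Nonnegativity of $P$ is not even used; only the marginal constraints and the two vanishing conditions are.
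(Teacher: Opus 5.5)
Your proof is correct and is the paper's argument. The paper also writes the block mass $\sum_{i\in I,j\in J}P_{ij}$ once through the row constraints on $I$, dropping the $I\times J^c$ entries, and once through the column constraints on $J$, dropping the $I^c\times J$ entries; you simply spell out the two steps the paper compresses into one line.
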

\begin{proof}
By primal feasibility and the assumed zero entries,
$
\sum_{i\in I}a_i=\sum_{i\in I,j\in J}P_{ij}=\sum_{j\in J}b_j.
$
This completes the proof.
\end{proof}
In particular, $\sum_{i\in I_d}a_i=\sum_{j\in J_d}b_j$ for every
connected component of the bipartite support graph of $P$.

\subsection{Entropy Bregman projections}\label{sec:ot-projections}
For $W>0$, $a\in\R^m_{++}$ and $b\in \R^n_{++}$ satisfying $e_m^\top a=e_n^\top b$, define
\begin{equation}\label{eq:ot-projection-map}
\mathcal P(W;a,b)\coleq\argmin\{D_H(Q;W)\mid
 Q\geq0,\ Qe_n=a,\ Q^\top e_m=b\}.
\end{equation}
\begin{proposition}[Entropy projection]\label{prop:ot-projection-properties}
The mapping in~\eqref{eq:ot-projection-map} has the following properties.
\begin{enumerate}[label=(\roman*),leftmargin=*,beginpenalty=10000]
\item The minimizer is unique and positive. A feasible matrix $Q>0$
equals $\mathcal P(W;a,b)$ if and only if, for some $\alpha\in\R^m$
and $\beta\in\R^n$,
\begin{equation}\label{eq:ot-projection-scaling}
Q=\Diag(e^\alpha)W\Diag(e^\beta).
\end{equation}
\item For positive vectors $u,v$ and any $t\in\R$,
\begin{eqnarray}
\mathcal P(W;a,b) &=&\mathcal P(\Diag(u)W\Diag(v);a,b),
\label{eq:ot-projection-invariance}
\\[5pt]
\mathcal P(W\odot e^{-tC};a,b) &=& 
\argmin\{t\langle C,Q\rangle+D_H(Q;W)\mid Q\in\Pi(a,b)\}.
\label{eq:ot-projection-cost}
\end{eqnarray}
\item For $Q\coleq\mathcal P(W;a,b)$ and every $V\in\Pi(a,b)$,
\begin{equation}\label{eq:ot-projection-pythagoras}
D_H(V;W)=D_H(V;Q)+D_H(Q;W).
\end{equation}
\item The mapping $(W,a,b)\mapsto\mathcal P(W;a,b)$ is $C^\infty$
on $\{(W,a,b):W>0,\ a>0,\ b>0,\ e_m^\top a=e_n^\top b\}$.
For every compact subset $K$ of this set, there exists $L_K>0$ such that
\begin{equation}\label{eq:ot-projection-lipschitz}
\|\mathcal P(\widehat W;\widehat a,\widehat b)-\mathcal P(W;a,b)\|_F
\leq L_K\big(\|\widehat W-W\|_F+\|\widehat a-a\|+\|\widehat b-b\|\big)
\end{equation}
for all $(W,a,b),(\widehat W,\widehat a,\widehat b)\in K$.
\end{enumerate}
\end{proposition}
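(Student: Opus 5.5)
The plan is to prove the four items in order. Everything rests on the strict convexity of $D_H(\cdot;W)$ on $\R^{m\times n}_+$ together with the fact that its gradient $\log Q-\log W$ blows up at the boundary.

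\emph{Item (i).} The feasible set $\Pi(a,b)$ is nonempty, since it contains $ab^\top$, and compact. The objective is continuous and strictly convex, so a unique minimizer exists.

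For positivity, I would argue by contradiction. Let $Q$ be the minimizer and suppose $Q_{ij}=0$ for some $(i,j)$. Take the strictly positive feasible point $V=ab^\top$ and set $Q_t=(1-t)Q+tV$. The one-sided derivative of $t\mapsto D_H(Q_t;W)$ at $t=0^+$ is $-\infty$, because the terms $(Q_t)_{ij}\log (Q_t)_{ij}$ with $Q_{ij}=0$ have derivative tending to $-\infty$, while all other terms have bounded derivative. This contradicts minimality.

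Since $Q>0$ and the constraints $Qe_n=a$, $Q^\top e_m=b$ are affine, the KKT conditions are necessary and sufficient by convexity. They read $\log Q-\log W=\alpha e_n^\top+e_m\beta^\top$ for some multipliers, which is exactly~\eqref{eq:ot-projection-scaling}. Conversely, a feasible $Q>0$ of that form satisfies the KKT conditions and is therefore the minimizer.

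\emph{Item (ii).} For the invariance~\eqref{eq:ot-projection-invariance}, write $\widetilde W=\Diag(u)W\Diag(v)$. A positive feasible $Q$ is a diagonal scaling of $W$ if and only if it is a diagonal scaling of $\widetilde W$ (absorb $\log u,\log v$ into $\alpha,\beta$). By (i) the two projections therefore coincide.

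For~\eqref{eq:ot-projection-cost}, expand
\[
D_H(Q;W\odot e^{-tC})=t\langle C,Q\rangle+D_H(Q;W)+\mathrm{const},
\]
where the constant is $\sum_{ij}\big(W_{ij}e^{-tC_{ij}}-W_{ij}\big)$. This uses $\sum_{ij} Q_{ij}=1$ on $\Pi$. Hence the two minimization problems have the same minimizers.

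\emph{Item (iii).} Use the three-point identity
\[
D_H(V;W)=D_H(V;Q)+D_H(Q;W)+\langle \log Q-\log W,\,V-Q\rangle .
\]
By (i), $\log Q-\log W=\alpha e_n^\top+e_m\beta^\top$. For $V\in\Pi(a,b)$,
\[
\langle \alpha e_n^\top,V-Q\rangle=\alpha^\top(Ve_n-Qe_n)=0,
\]
and similarly the $\beta$ term vanishes, so the cross term is zero. Entries with $V_{ij}=0$ are harmless under the convention $0\log0=0$, since the identity is linear in $V$ and holds for $V\ge0$.

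\emph{Item (iv).} This is the main obstacle. I would apply the implicit function theorem to the scaling equations. The unknowns are $(\alpha,\beta)$ modulo the one-dimensional gauge $(\alpha+c e_m,\beta-ce_n)$; I fix the gauge by $\beta_n=0$. The equations are
\[
\Diag(e^\alpha)We^\beta=a,\qquad \Diag(e^\beta)W^\top e^\alpha=b.
\]
One of these $m+n$ equations is redundant because $e_m^\top a=e_n^\top b$, so I drop the last column equation. The resulting map is $C^\infty$ in $(\alpha,\beta,W,a,b)$.

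Its Jacobian in $(\alpha,\beta)$ is the bipartite graph Laplacian-type matrix
\[
\begin{pmatrix}\Diag(a)&Q\\ Q^\top&\Diag(b)\end{pmatrix},
\]
with the last row and column removed. Since $Q>0$, the bipartite graph is connected, so the kernel of the full matrix acting on $(\alpha,-\beta)$ is spanned by $(e_m,-e_n)$. The gauge-fixed Jacobian is therefore nonsingular.

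Existence of a solution for every admissible $(W,a,b)$ comes from (i). The implicit function theorem then gives that $(\alpha,\beta)$, and hence $Q$, depend smoothly on $(W,a,b)$.

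The Lipschitz bound~\eqref{eq:ot-projection-lipschitz} follows from boundedness of the derivative on the compact set $K$, applied along line segments. The domain is convex as the intersection of open orthants with an affine constraint, so a segment joining two points of $K$ stays in the domain. The segment may leave $K$, however, so I would instead bound the derivative on the convex hull of $K$, which is compact and still lies in the domain.
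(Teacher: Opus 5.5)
Your proof is correct, but it follows a different route from the paper, which gives no direct argument. The paper obtains (i)--(iii) by citing Csisz\'ar's theorems on I-projections onto linear families, with the cost absorbed into $W$. It obtains (iv) by citing the implicit-function argument of Luise et al.\ and Pauwels et al. You instead prove each item directly:
\begin{itemize}
\item positivity from the $-\infty$ one-sided derivative toward an interior feasible point;
\item the scaling form from the Lagrange conditions for affine constraints;
\item the Pythagorean identity from the three-point identity;
\item smoothness from the implicit function theorem applied to the gauge-fixed scaling equations.
\end{itemize}
For (iv) this is essentially the argument behind the cited references, made explicit. Your version shows exactly where $W>0$ is used: connectedness of the support graph of $Q$, the same mechanism the paper later uses in Lemma~\ref{lem:ot-linear-contraction}. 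It also justifies the step from $C^\infty$ to the constant $L_K$, by working on the convex hull of $K$, which the paper leaves implicit. The citation route buys brevity.

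A few points should be tightened.
\begin{itemize}
\item A one-dimensional kernel alone does not make the minor obtained by deleting an arbitrary row and column nonsingular. State that the Jacobian is symmetric with quadratic form $\sum_{i,j}Q_{ij}(x_i+y_j)^2$. Then deleting an index at which the kernel vector $(e_m,-e_n)$ is nonzero leaves a positive definite matrix.
\item To identify the local implicit-function solution with the global map, note that $(\alpha,\beta)$ with $\beta_n=0$ is unique. This holds because $\alpha_i+\beta_j=\log(Q_{ij}/W_{ij})$ and $Q$ is unique by (i).
\item The matrix $ab^\top$ is feasible only when the common mass is one. On the domain in (iv), use $ab^\top/(e_m^\top a)$ instead.
\item The identity \eqref{eq:ot-projection-cost} does not need $\sum_{ij}Q_{ij}=1$, because the linear terms $-Q_{ij}$ cancel identically.
\end{itemize}
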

\begin{proof}
Parts~\textup{(i)--(iii)} follow from the linear-family projection
properties in~\cite[Theorems~3.1--3.2]{csiszar2004information}, with
the cost absorbed into $W$ in~\eqref{eq:ot-projection-cost}.
Part~\textup{(iv)} follows from the implicit-function argument
in~\cite[Theorem~2 and Supplement, Section~C]{luise2018differential} applied to the cost
$-\log W$; see also~\cite[Theorem~3.3]{pauwels2023derivatives}
for joint dependence on the cost and marginals.
\end{proof}

We next define the optimal solution selected by the initial
matrix and a feasible path approaching this solution.
For a fixed $X^0\in\operatorname{ri}\Pi$, define
\begin{equation}\label{eq:ot-selected}
P_*\coleq\argmin\{D_H(Q;X^0)\mid Q\in\mathcal O\}.
\end{equation}
The associated entropy path is
\begin{equation}\label{eq:ot-entropy-path}
P(t)\coleq\mathcal P(X^0\odot e^{-tC};a,b),\qquad t\geq0.
\end{equation}
For $t>0$, $P(t)$ minimizes the transport cost with an entropy
penalty of weight $1/t$ centered at $X^0$.
\begin{proposition}[Entropy path]\label{prop:ot-entropy-path}
The problem in~\eqref{eq:ot-selected} has a unique solution $P_*$
with $\operatorname{supp}P_*=S$.
The path $P(t)$ in~\eqref{eq:ot-entropy-path} is positive and smooth
for $t\geq0$. Moreover,
\begin{equation}\label{eq:ot-path-limit}
P(t)\to P_*,\qquad D_H(P_*;P(t))\to0\qquad(t\to\infty).
\end{equation}
\end{proposition}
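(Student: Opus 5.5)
We prove the three claims in order: existence, uniqueness and support of $P_*$, then positivity and smoothness of the path, then convergence.

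\textbf{Step 1: existence, uniqueness and support of $P_*$.} The set $\mathcal O$ is a nonempty compact polyhedron. Since $X^0>0$, the map $Q\mapsto D_H(Q;X^0)$ is continuous and strictly convex on $\R^{m\times n}_+$. Hence~\eqref{eq:ot-selected} has a unique minimizer $P_*$. To show $\operatorname{supp}P_*=S$, pick any $\bar Q\in\operatorname{ri}\mathcal O$; by Proposition~\ref{prop:ot-optimal-support}, $\bar Q$ is positive on $S$. Suppose $P_{*,ij}=0$ for some $(i,j)\in S$. Along the segment $P_*+\epsilon(\bar Q-P_*)$, which stays in $\mathcal O$, the directional derivative of $D_H(\cdot;X^0)$ at $\epsilon=0^+$ is $\sum(\bar Q-P_*)_{kl}\log(P_{*,kl}/X^0_{kl})$. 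This equals $-\infty$, because the $(i,j)$ term contributes $\bar Q_{ij}\log 0$ and all other terms are finite. This contradicts minimality, so $\operatorname{supp}P_*=S$.

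\textbf{Step 2: positivity and smoothness of the path.} For each $t\geq0$ the matrix $X^0\odot e^{-tC}$ is positive and depends smoothly on $t$. Proposition~\ref{prop:ot-projection-properties}(i) gives that $P(t)$ is positive, and part~(iv) gives that $t\mapsto P(t)$ is $C^\infty$, since $a,b$ are fixed.

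\textbf{Step 3: convergence $P(t)\to P_*$.} By~\eqref{eq:ot-projection-cost}, $P(t)$ minimizes $\langle C,Q\rangle+t^{-1}D_H(Q;X^0)$ over $\Pi$. Let $c_*$ be the optimal value. Comparing with $P_*$ gives
\[
\langle C,P(t)\rangle-c_*\leq t^{-1}\bigl(D_H(P_*;X^0)-D_H(P(t);X^0)\bigr)\leq t^{-1}D_H(P_*;X^0).
\]
The left side is $\geq0$, so also $D_H(P(t);X^0)\leq D_H(P_*;X^0)$. By compactness of $\Pi$, every limit point $\bar P$ of $P(t)$ is optimal, and by lower semicontinuity it satisfies $D_H(\bar P;X^0)\leq D_H(P_*;X^0)$. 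Uniqueness from Step 1 then forces $\bar P=P_*$, so $P(t)\to P_*$.

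\textbf{Step 4: $D_H(P_*;P(t))\to0$.} This is the delicate part, because $\log P(t)_{ij}\to-\infty$ off $S$. We write
\[
D_H(P_*;P(t))=\sum_{(i,j)\in S}\Bigl(P_{*,ij}\log\frac{P_{*,ij}}{P(t)_{ij}}-P_{*,ij}+P(t)_{ij}\Bigr)+\sum_{(i,j)\notin S}P(t)_{ij}.
\]
The second sum tends to zero. In the first sum, $P_{*,ij}>0$ and $P(t)_{ij}\to P_{*,ij}$ on $S$, so each term tends to zero by continuity.

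Alternatively, one can use Pythagoras~\eqref{eq:ot-projection-pythagoras} with $W=X^0\odot e^{-tC}$, but the direct entrywise argument above already suffices. The main obstacle is really the support claim in Step 1, where the infinite-slope argument of the entropy at the boundary is essential; Step 4 then reduces to continuity on $S$.
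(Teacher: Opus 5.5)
Your proof is correct, but it takes a different route from the paper. The paper's proof works mostly by citation. It takes existence, uniqueness and $\operatorname{supp}P_*=S$ from Csisz\'ar's theory of $I$-projections onto linear families, combined with Proposition~\ref{prop:ot-optimal-support}. Positivity and smoothness come from Proposition~\ref{prop:ot-projection-properties}(i),(iv), exactly as in your Step~2. Both limits in \eqref{eq:ot-path-limit} are taken from the Alvarez--Bolte--Brahic convergence theory for Hessian--Riemannian gradient flows and central paths in linear programming. That theory shows the path of $\min\{\langle C,Q\rangle+t^{-1}D_H(Q;X^0)\mid Q\in\Pi\}$ converges to the Bregman projection of $X^0$ onto the optimal face.

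You instead give a self-contained argument with three ingredients:
\begin{itemize}
\item the infinite slope of $x\log x$ at $0$, along a segment toward a point of $\operatorname{ri}\mathcal O$, to get the support claim;
\item a Tikhonov-type comparison with $P_*$, giving both $\langle C,P(t)\rangle-c_*\le t^{-1}D_H(P_*;X^0)$ and $D_H(P(t);X^0)\le D_H(P_*;X^0)$, then compactness and uniqueness to get $P(t)\to P_*$;
\item an entrywise computation for $D_H(P_*;P(t))\to0$.
\end{itemize}
The citation route is shorter and places the path inside a general Legendre-barrier framework. Yours is elementary and shows that the second limit follows immediately from the first. Only entries of $\operatorname{supp}P_*$ carry logarithms, and there $P_*$ is positive.

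Two small remarks. First, Step~4 uses only $P(t)\to P_*$: if you split over $\operatorname{supp}P_*$ instead of $S$, it does not need the identity $\operatorname{supp}P_*=S$. So, contrary to your closing comment, it does not rest on Step~1's support argument. Second, in Step~1, $P_*$ might vanish at several entries of $S$, so it is not true that ``all other terms are finite.'' The conclusion still holds: each such term equals $-\infty$, and entries outside $S$ are constant along the segment. Hence the right directional derivative of the convex function $\epsilon\mapsto D_H(P_*+\epsilon(\bar Q-P_*);X^0)$ is $-\infty$, and minimality is contradicted.
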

\begin{proof}
The support characterization in Proposition~\ref{prop:ot-optimal-support}
and~\cite[Theorems~3.1--3.2]{csiszar2004information} give existence,
uniqueness and $\operatorname{supp}P_*=S$.
Applying Proposition~\ref{prop:ot-projection-properties}(i), (iv)
to the positive matrix $X^0\odot e^{-tC}$ gives positivity and
smoothness of $P(t)$, and hence bounded derivatives on every finite
interval. The limits in~\eqref{eq:ot-path-limit} follow
from~\cite[Proposition~3.2, Theorem~4.2 and Corollary~4.1]{alvarez2004hessian}.
\end{proof}

\subsection{Bregman ADMM}\label{sec:ot-algorithm}
For the original OT problem, the entropy Bregman proximal point
algorithm (PPA)~\cite{teboulle1992entropic,yang2022bregman} adds to the linear objective
a proximal term centered at the current feasible iterate.
Starting from $Q^0\in\operatorname{ri}\Pi$
and fixing $\rho>0$, it generates $\{Q^k\}$ by
\begin{equation}\label{eq:ot-bregman-proximal-step}
Q^{k+1}=\argmin\{\langle C,Q\rangle+\rho D_H(Q;Q^k)\mid Q\in\Pi\}
=\mathcal P(Q^k\odot e^{-C/\rho};a,b),
\end{equation}
for $k\geq0$, where the second equality follows
from~\eqref{eq:ot-projection-cost}. Proposition~\ref{prop:ot-projection-properties}(i)
ensures that each $Q^{k+1}$ is positive and feasible.
The center $Q^k$ changes at every step, so the proximal term is not a
fixed entropy regularizer added to the target OT problem.
Each proximal step requires a complete entropy projection satisfying
both marginal constraints, which generally requires an inner iteration
of row and column normalizations. To avoid solving this joint
subproblem at every iteration, we separate the two sets of marginal
constraints, using $X$ for the row constraints and $Y$ for the column
constraints, and introduce a multiplier $Z$ for $X=Y$.
The resulting equivalent formulation of the original OT problem is
\begin{equation*}
\min\{\langle C,X\rangle\mid X\in\cX,\ Y\in\cY,\ X-Y=0\}.
\end{equation*}
Here $X,Y$ are the primal variables, and $Z\in\R^{m\times n}$ is the dual
variable associated with $X-Y=0$.
We use entropy Bregman penalties in the primal subproblems
\cite{wang2014bregman} and the nonlinear dual update
of~\cite[Algorithm~3]{khoo2025bregman}, with a relaxation parameter $s>0$.
To simplify the indices in the analysis, Algorithm~\ref{alg:ot}
uses the update order $Y,Z,X$.
\begin{algorithm}[H]
\caption{Bregman ADMM for optimal transport}\label{alg:ot}
\textbf{Input:} Choose $\rho>0$, $s>0$, $X^0>0$ with
$X^0e_n=a$, and $Z^0\in\R^{m\times n}$. Set $k=0$.
\par\noindent\textbf{While} a termination criterion is not met, compute
\begin{subequations}\label{eq:ot-iteration}
\begin{align}
 Y^{k+1}&=\mathcal C_b\!\left(
       X^k\odot\exp\!\left(\frac{Z^k}{\rho}\right)\right),
       \\
 Z^{k+1}&=Z^k+s\rho\big(\log X^k-\log Y^{k+1}\big),
       \\
 X^{k+1}&=\mathcal R_a\!\left(
       Y^{k+1}\odot\exp\!\left(-\frac{C+Z^{k+1}}{\rho}\right)\right).
\end{align}
\end{subequations}
Set $k\leftarrow k+1$ and continue.
\end{algorithm}

After the first two updates, Algorithm~\ref{alg:ot} starts from
$(Y^1,Z^1)$ and computes $X^k,Y^{k+1},Z^{k+1}$ for $k\geq1$.
This gives the update order in~\eqref{eq:ot-bregman-updates}.
If $X^0\in\Pi$ and $Z^0=0$, then $Y^1=X^0$ and $Z^1=0$.

\begin{remark}[Relation to ADEMM and BDRS]
For $s=1$, $Z^0=0$ and matching starting points and iteration indices,
Algorithm~\ref{alg:ot} has the same scaling updates as the alternating
direction exponential multiplier method (ADEMM) and
an equivalent entropy Bregman Douglas--Rachford splitting (BDRS) formulation
\cite[Theorem~3.3 and Eq.~(5.12)]{ma2025bdrs}.
\end{remark}

\subsection{Properties of the iterates}\label{sec:ot-properties}
We establish the identities and projection properties of
Algorithm~\ref{alg:ot} needed for the convergence analysis.
\begin{proposition}[Basic identities and equivalent updates]\label{prop:ot-equivalent-updates}
Let $\{(X^k,Y^k,Z^k)\}$ be generated by Algorithm~\ref{alg:ot}.
\begin{enumerate}[label=(\roman*),leftmargin=*]
\item The primal updates are uniquely defined and, for every $k\geq1$,
\begin{equation*}
X^k>0,\qquad Y^k>0,\qquad X^ke_n=a,\qquad (Y^k)^\top e_m=b.
\end{equation*}
There exist dual variables $r^k\in\R^m$ for the row constraints in
the $X^k$ subproblems ($k\geq1$) and $c^k\in\R^n$ for the column
constraints in the $Y^{k+1}$ subproblems ($k\geq0$) such that
\begin{subequations}\label{eq:sub-kkt}
\begin{align}
C+Z^k+\rho(\log X^k-\log Y^k)+r^ke_n^\top&=0,\quad k\geq1,\\
-Z^k+\rho(\log Y^{k+1}-\log X^k)+e_m(c^k)^\top&=0,\quad k\geq0.
\end{align}
\end{subequations}
\item If $Z^0=0$, all entries in each column of $Z^k$ are equal. Thus there is a vector
$z^k\in\R^n$, with $z^0=0$, such that
$Z^k=-\rho e_m(z^k)^\top$.
The column update then reduces to $Y^{k+1}=\mathcal C_b(X^k)$, and
\begin{equation}\label{eq:ot-vector-scaling}
z^{k+1}=z^k-s\log\big(((X^k)^\top e_m)\oslash b\big).
\end{equation}
The row update $X^{k+1}$ is obtained by substituting
$Z^{k+1}=-\rho e_m(z^{k+1})^\top$ in Algorithm~\ref{alg:ot}.
\item If $Z^0=0$, then for $k\geq1$, eliminating the multiplier gives the recurrence
\begin{equation}\label{eq:ot-two-primal-recurrence}
Y^{k+1}=\mathcal C_b(X^k),\qquad
X^{k+1}=\mathcal R_a\!\left[
\big((X^k)^{\odot2}\oslash Y^k\big)
\Diag\!\left(\big(b\oslash((X^k)^\top e_m)\big)^{1+s}\right)
\right].
\end{equation}
\end{enumerate}
\end{proposition}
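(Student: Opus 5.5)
We prove the three parts in order by direct computation, using the explicit form of the normalization maps $\mathcal R_a$ and $\mathcal C_b$ and the variational characterizations recorded in the Notation section.

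\emph{Part (i).} We argue by induction on $k$. Suppose $X^k>0$ with $X^ke_n=a$; this holds for $k=0$ by the input assumptions. Then $W\coleq X^k\odot\exp(Z^k/\rho)$ is entrywise positive, so its column sums are positive. Hence $Y^{k+1}=\mathcal C_b(W)=W\Diag(b\oslash(W^\top e_m))$ is positive and satisfies $(Y^{k+1})^\top e_m=b$. The multiplier update $Z^{k+1}$ is well defined because both logarithms are taken of positive matrices. The same reasoning applied to $Y^{k+1}\odot\exp(-(C+Z^{k+1})/\rho)>0$ and $\mathcal R_a$ gives $X^{k+1}>0$ with $X^{k+1}e_n=a$. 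Uniqueness follows from strict convexity of $D_H(\cdot;W)$ on the affine constraint set.

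For the KKT identities, we identify $Y^{k+1}$ as the minimizer of $-\inprod{Z^k}{Y}+\rho D_H(Y;X^k)$ over $\cY$. Indeed, $-\inprod{Z^k}{Y}+\rho D_H(Y;X^k)$ differs from $\rho D_H(Y;X^k\odot e^{Z^k/\rho})$ only by a constant, and $\mathcal C_b$ is the entropy projection onto $\cY$. Since the minimizer is positive, the constraint $Y\geq0$ is inactive, and stationarity with a multiplier $c^k$ for $Y^\top e_m=b$ gives (b). Alternatively, (b) can be read off directly: $\log Y^{k+1}=\log X^k+Z^k/\rho+e_m\log(b\oslash W^\top e_m)^\top$, so one may take $c^k=-\rho\log(b\oslash W^\top e_m)$. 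Similarly, $\log X^k=\log Y^k-(C+Z^k)/\rho+\log(a\oslash(\cdot)e_n)e_n^\top$ for $k\geq1$, which yields (a) with an explicit $r^k$.

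\emph{Part (ii).} Subtracting (b) from the $Z$ update shows that $Z^{k+1}=Z^k+s\rho(\log X^k-\log Y^{k+1})=Z^k-s(Z^k-e_m(c^k)^\top)$. Therefore, if $Z^k=-\rho e_m(z^k)^\top$, then $Z^{k+1}$ also has constant columns; by induction this holds for all $k$, starting from $z^0=0$. When $Z^k=-\rho e_m(z^k)^\top$, the column scaling absorbs the factor $\exp(Z^k/\rho)=e_m\exp(-z^k)^\top$, which is a right diagonal scaling. Hence $Y^{k+1}=\mathcal C_b(X^k)=X^k\Diag(b\oslash(X^k)^\top e_m)$. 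Then $\log X^k-\log Y^{k+1}=-e_m\log(b\oslash (X^k)^\top e_m)^\top$, and substituting this into the $Z$ update gives \eqref{eq:ot-vector-scaling}.

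\emph{Part (iii).} Set $d^k\coleq b\oslash((X^k)^\top e_m)$. Writing $\exp(-Z^{k+1}/\rho)=e_m\exp(z^{k+1})^\top$, the update becomes
\[
X^{k+1}=\mathcal R_a\big(Y^{k+1}\odot e^{-C/\rho}\Diag(e^{z^{k+1}})\big).
\]
We eliminate $e^{-C/\rho}$ using the previous row step for $k\geq1$:
\[
X^k=\Diag(\cdot)\,Y^k\odot e^{-C/\rho}\Diag(e^{z^k}),
\]
so $e^{-C/\rho}=\Diag(\cdot)^{-1}(X^k\oslash Y^k)\Diag(e^{-z^k})$. Recall also that $Y^{k+1}=X^k\Diag(d^k)$ and $e^{z^{k+1}-z^k}=(d^k)^{s}$. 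Substituting these, the argument of $\mathcal R_a$ equals
\[
\Diag(\cdot)\,\big((X^k)^{\odot2}\oslash Y^k\big)\Diag\big((d^k)^{1+s}\big).
\]
Since $\mathcal R_a$ is invariant under left diagonal scaling, the left factor can be dropped, which gives \eqref{eq:ot-two-primal-recurrence}.

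The main point requiring care is this final bookkeeping: tracking the diagonal factors and recognizing that left row scalings are annihilated by $\mathcal R_a$, while the right column factors combine into the single exponent $1+s$. The rest is routine.
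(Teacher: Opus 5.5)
Your proposal is correct and follows essentially the same route as the paper. Part (i) reads off $r^k,c^k$ from the logarithms of the explicit normalization formulas, part (ii) cancels the right column factor $e^{-z^k}$ inside $\mathcal C_b$, and part (iii) substitutes the previous row step and uses invariance of $\mathcal R_a$ under left diagonal scaling; writing $Z^{k+1}=(1-s)Z^k+s\,e_m(c^k)^\top$ from (b) is only a slightly cleaner way to run the column-structure induction.
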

\begin{proof}
For~\textup{(i)}, the normalizations in~\eqref{eq:ot-iteration}
give positive matrices with the prescribed marginals.
Strict convexity gives uniqueness. Taking logarithms of the two
primal normalization formulas gives~\eqref{eq:sub-kkt}, with the
row and column normalization factors determining $r^k$ and $c^k$.

For~\textup{(ii)}, suppose that $Z^k=-\rho e_m(z^k)^\top$.
The column factors $e^{-z^k}$ cancel in the normalization defining
$Y^{k+1}$. Hence $Y^{k+1}=\mathcal C_b(X^k)$ and
\begin{equation*}
\log X^k-\log Y^{k+1}
=e_m\big[\log(((X^k)^\top e_m)\oslash b)\big]^\top.
\end{equation*}
The multiplier update gives the formula for $z^{k+1}$ in~\eqref{eq:ot-vector-scaling}.
Induction from $Z^0=0$ proves the stated column structure for $Z^{k+1}$ and the
vector recurrence for $z^{k+1}$. Substituting this representation of $Z^{k+1}$
in~\eqref{eq:ot-iteration} gives the row update for $X^{k+1}$.

For~\textup{(iii)}, the row optimality condition in~\eqref{eq:sub-kkt}
gives
$e^{-C_{ij}/\rho+z_j^k}=e^{r_i^k/\rho}X^k_{ij}/Y^k_{ij}$.
Part~\textup{(ii)} gives
$Y^{k+1}_{ij}=X^k_{ij}b_j/((X^k)^\top e_m)_j$ and
$e^{z_j^{k+1}-z_j^k}=(b_j/((X^k)^\top e_m)_j)^s$.
Substituting both expressions into the input of the next row
normalization yields
\begin{equation*}
Y^{k+1}_{ij}e^{-C_{ij}/\rho+z_j^{k+1}}
=e^{r_i^k/\rho}\frac{(X^k_{ij})^2}{Y^k_{ij}}
 \left(\frac{b_j}{((X^k)^\top e_m)_j}\right)^{1+s}.
\end{equation*}
The factor $e^{r_i^k/\rho}$ is constant within row $i$ and cancels
under $\mathcal R_a$. This proves~\eqref{eq:ot-two-primal-recurrence}.
\end{proof}

The recurrence~\eqref{eq:ot-two-primal-recurrence} maps $(Y^k,X^k)$
to $(Y^{k+1},X^{k+1})$ without the multiplier. We use this relation
to estimate the marginal residuals in Section~\ref{sec:ot-convergence}.
For $k\geq1$, define the relative marginal residuals
\begin{equation}\label{eq:ot-marginal-residuals}
r_Y^k\coleq(Y^ke_n)\oslash a-e_m,\qquad
c_X^k\coleq((X^k)^\top e_m)\oslash b-e_n.
\end{equation}
They measure the violation of the row constraints by $Y^k$
and of the column constraints by $X^k$. Since the entries of $X^k$ and $Y^k$
each sum to one, we have
$a^\top r_Y^k=b^\top c_X^k=0$ for all $k$. 

Although $Y^k$ need not satisfy the row constraints, its
feasible projections satisfy the exact entropy PPA iteration, starting
from $\mathcal P(Y^1;a,b)$, as shown in the next proposition. These projections are used only in the
analysis and need not be computed by Algorithm~\ref{alg:ot}.
\begin{proposition}[Entropy projections of the iterates]\label{prop:ot-iterate-projections}
Let Algorithm~\ref{alg:ot} use a fixed $\rho>0$. For every $k\geq1$,
the following properties hold.
\begin{enumerate}[label=(\roman*),leftmargin=*]
\item The feasible projections satisfy the exact proximal iteration
\begin{equation}\label{eq:ot-exact-ppa}
\mathcal P(Y^{k+1};a,b)
=\argmin\{\langle C,Q\rangle
+\rho D_H(Q;\mathcal P(Y^k;a,b))\mid Q\in\Pi\}
\end{equation}
and the path representation
\begin{equation}\label{eq:ot-path-projection}
\mathcal P(Y^{k+1};a,b)
=\mathcal P\big(X^0\odot\exp((Z^0-kC)/\rho);a,b\big).
\end{equation}
\item If $Z^0=0$, then
\begin{equation}\label{eq:ot-shifted-projections}
Y^{k+1}=\mathcal P(X^0\odot e^{-kC/\rho};Y^{k+1}e_n,b),\qquad
X^k=\mathcal P(X^0\odot e^{-kC/\rho};a,(X^k)^\top e_m).
\end{equation}
\end{enumerate}
\end{proposition}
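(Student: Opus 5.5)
The key observation is that the updates in~\eqref{eq:ot-iteration} only multiply matrices entrywise by exponentials and then rescale rows or columns. Hence every iterate is a diagonal scaling of a fixed reference matrix. By the scaling invariance~\eqref{eq:ot-projection-invariance}, all diagonal factors disappear under $\mathcal P(\cdot;a,b)$. The plan is to track how each $Y^{k+1}$ relates to $X^0$ and $Z^0$ modulo row and column scalings, then read off~(i) and~(ii) from Proposition~\ref{prop:ot-projection-properties}.

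\textbf{Step 1 (scaling representation).} We prove by induction that for every $k\ge0$ there are positive vectors $u^k,v^k$ with
\begin{equation*}
Y^{k+1}=\Diag(u^k)\big(X^0\odot\exp((Z^0-kC)/\rho)\big)\Diag(v^k).
\end{equation*}
For $k=0$ this is immediate, since $Y^1=\mathcal C_b(X^0\odot e^{Z^0/\rho})$ is a column scaling of $X^0\odot e^{Z^0/\rho}$.

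For the inductive step, write $Y^{k+2}=\mathcal C_b(X^{k+1}\odot e^{Z^{k+1}/\rho})$ with $X^{k+1}=\mathcal R_a(Y^{k+1}\odot e^{-(C+Z^{k+1})/\rho})$. The factors $e^{\pm Z^{k+1}/\rho}$ cancel. Therefore $Y^{k+2}$ is a row and column scaling of $Y^{k+1}\odot e^{-C/\rho}$, which by the induction hypothesis is a scaling of $X^0\odot\exp((Z^0-(k+1)C)/\rho)$.

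The one point needing care is that the multiplier enters $X^{k+1}$ and $Y^{k+2}$ with the same $Z^{k+1}$ and opposite signs. Consequently the stepsize $s$ never appears, which explains why~\eqref{eq:ot-path-projection} is independent of $s$.

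\textbf{Step 2 (part (i)).} Applying~\eqref{eq:ot-projection-invariance} to Step~1 gives~\eqref{eq:ot-path-projection} directly. For~\eqref{eq:ot-exact-ppa}, set $Q^k\coleq\mathcal P(Y^k;a,b)$. By~\eqref{eq:ot-projection-scaling}, $Q^k$ is a diagonal scaling of $Y^k$, so $Q^k\odot e^{-C/\rho}$ is a diagonal scaling of $Y^k\odot e^{-C/\rho}$. Step~1 shows the latter is in turn a diagonal scaling of $Y^{k+1}$.

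Invariance then yields $\mathcal P(Y^{k+1};a,b)=\mathcal P(Q^k\odot e^{-C/\rho};a,b)$. By~\eqref{eq:ot-projection-cost} with $t=1/\rho$, this equals the proximal minimizer, since multiplying the objective by $\rho$ does not change the argmin.

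\textbf{Step 3 (part (ii)).} With $Z^0=0$, Step~1 says $Y^{k+1}>0$ is a diagonal scaling of $X^0\odot e^{-kC/\rho}$. By definition $Y^{k+1}$ has row sums $Y^{k+1}e_n$ and column sums $b$, and these are positive vectors with equal totals. The characterization in Proposition~\ref{prop:ot-projection-properties}(i), applied with marginals $(Y^{k+1}e_n,b)$, therefore gives the first identity in~\eqref{eq:ot-shifted-projections}.

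For $X^k$ with $k\ge1$, we have $X^k=\mathcal R_a(Y^k\odot e^{-(C+Z^k)/\rho})$ with $Z^k=-\rho e_m(z^k)^\top$ by Proposition~\ref{prop:ot-equivalent-updates}(ii). Thus $X^k$ is a diagonal scaling of $Y^k\odot e^{-C/\rho}$, hence of $X^0\odot e^{-kC/\rho}$. It is positive, with marginals $a$ and $(X^k)^\top e_m$, so the same characterization gives the second identity.

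\textbf{Main obstacle.} I expect the main difficulty to be bookkeeping rather than anything substantive. One must keep the indices consistent under the $Y,Z,X$ ordering, in particular checking that $X^k$ carries the exponent $-kC/\rho$ and not $-(k-1)C/\rho$. One must also verify that Proposition~\ref{prop:ot-projection-properties}(i) legitimately applies with the non-prescribed marginals $Y^{k+1}e_n$ and $(X^k)^\top e_m$, which are positive and have total mass one.
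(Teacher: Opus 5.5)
Your proof is correct and follows essentially the same route as the paper. The paper obtains the key relation that $Y^{k+1}$ is a row-and-column scaling of $Y^k\odot e^{-C/\rho}$ by adding the logarithmic optimality conditions \eqref{eq:sub-kkt}, so that $Z^k$ and $\log X^k$ cancel, and then telescoping; you read the same relation off the multiplicative updates and induct. The remaining steps are identical: scaling invariance, \eqref{eq:ot-projection-cost} with $t=1/\rho$, and the characterization in Proposition~\ref{prop:ot-projection-properties}(i) applied with the actual marginals.
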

\begin{proof}
(i) We first compare two successive feasible projections.
Adding the row optimality condition at $X^k$ and the column
optimality condition at $Y^{k+1}$ in~\eqref{eq:sub-kkt} cancels $Z^k$
and $\log X^k$, giving
\begin{equation}\label{eq:ot-log-telescope}
\rho(\log Y^{k+1}-\log Y^k)+C+r^ke_n^\top+e_m(c^k)^\top=0.
\end{equation}
Hence $Y^{k+1}$ is a positive row and column diagonal scalings of $Y^k\odot e^{-C/\rho}$.
By Proposition~\ref{prop:ot-projection-properties}(i), $\mathcal P(Y^k;a,b)$
is also a positive diagonal scaling of $Y^k$.
Applying~\eqref{eq:ot-projection-invariance} to these two scalings gives
\begin{equation*}
\mathcal P(Y^{k+1};a,b)
=\mathcal P(Y^k\odot e^{-C/\rho};a,b)
=\mathcal P(\mathcal P(Y^k;a,b)\odot e^{-C/\rho};a,b).
\end{equation*}
Applying~\eqref{eq:ot-projection-cost} with
$t=1/\rho$ and $W=\mathcal P(Y^k;a,b)$ proves~\eqref{eq:ot-exact-ppa}.

To identify the projection from the initial point, sum
\eqref{eq:ot-log-telescope} over $t=1,\ldots,k$:
\begin{equation}\label{eq:ot-scaling-invariant}
\log Y^{k+1}-\log Y^1
=-\frac{kC}{\rho}-\frac1\rho\sum_{t=1}^{k}
\big(r^te_n^\top+e_m(c^t)^\top\big).
\end{equation}
The initialization gives $Y^1=\mathcal C_b(X^0\odot e^{Z^0/\rho})$.
Thus $Y^{k+1}$ is a positive diagonal scaling of
$X^0\odot e^{(Z^0-kC)/\rho}$. Scaling invariance then
proves~\eqref{eq:ot-path-projection}.

(ii) Suppose now that $Z^0=0$.
The same representation at index $k+1$ and the column marginal $b$
show that $Y^{k+1}$
is a feasible positive diagonal scaling for the first projection
in~\eqref{eq:ot-shifted-projections}.
Furthermore, Proposition~\ref{prop:ot-equivalent-updates}(ii) and
the row optimality condition give
\begin{equation*}
X^k=\Diag(e^{-r^k/\rho})(Y^k\odot e^{-C/\rho})\Diag(e^{z^k}).
\end{equation*}
Consequently $X^k$ is a positive diagonal scaling of
$X^0\odot e^{-kC/\rho}$ with row sums $a$ and column sums
$(X^k)^\top e_m$.
The characterization in Proposition~\ref{prop:ot-projection-properties}(i),
applied with these respective marginal pairs, proves both identities
in~\eqref{eq:ot-shifted-projections}.
\end{proof}
Convergence of the projected sequence alone does not imply
convergence of the actual iterates. In Section~\ref{sec:ot-convergence}, we estimate
bounds for their marginal residuals and prove convergence.

\section{Convergence analysis}\label{sec:ot-convergence}
We establish convergence of the primal--dual sequence
generated by Algorithm~\ref{alg:ot}.

\begin{theorem}[Global convergence]\label{thm:ot-global}
Fix $s\in(0,2)$ and $X^0\in\operatorname{ri}\Pi$.
There exists $\rho_0>0$, depending on $(C,a,b,X^0,s)$, such that
Algorithm~\ref{alg:ot} with $Z^0=0$ and any fixed $\rho\geq\rho_0$
generates a sequence $\{(X^k,Y^k,Z^k)\}$ with the following properties.
\begin{enumerate}[label=(\roman*),ref=(\roman*),leftmargin=*]
\item (Finite length.) The sequence has finite length:
\begin{equation*}
\sum_{k=1}^{\infty}\big(\|X^{k+1}-X^k\|_F+\|Y^{k+1}-Y^k\|_F
 +\|Z^{k+1}-Z^k\|_F\big)<\infty.
\end{equation*}
\item (KKT convergence.) The sequence converges to a KKT solution
$(P_*,P_*,Z_\infty)$ of~\eqref{eq:ot-split},
whose associated OT solution is strictly complementary,
and $P_*$ is the Bregman projection of $X^0$ onto $\mathcal O$
in~\eqref{eq:ot-selected}.
\item (Linear convergence.) The convergence is $R$-linear: for some
$M>0$ and $q\in(0,1)$, and sufficiently large $k$, we have
\begin{equation*}
\|X^k-P_*\|_F+\|Y^k-P_*\|_F+\|Z^k-Z_\infty\|_F\leq Mq^k.
\end{equation*}
\end{enumerate}
\end{theorem}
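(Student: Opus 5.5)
The plan is to exploit the shifted-projection representation in Proposition~\ref{prop:ot-iterate-projections}(ii). With $Z^0=0$ and $t_k\coleq k/\rho$, we have $X^k=\mathcal P(X^0\odot e^{-t_kC};a,(X^k)^\top e_m)$ and $Y^{k+1}=\mathcal P(X^0\odot e^{-t_kC};Y^{k+1}e_n,b)$. The feasible counterpart of both is the entropy path point $P(t_k)$ from Proposition~\ref{prop:ot-entropy-path}. Thus everything reduces to two tasks: showing that the relative marginal residuals $c_X^k$ and $r_Y^k$ in~\eqref{eq:ot-marginal-residuals} decay geometrically, and showing that $P(t)$ converges to $P_*$ exponentially in $t$.

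\textbf{Step 1: linear convergence of the entropy path.} Write $P(t)=\Diag(e^{\alpha(t)})(X^0\odot e^{-tC})\Diag(e^{\beta(t)})$. Using the strict dual certificate $(\alpha_*,\beta_*)$ of Proposition~\ref{prop:ot-optimal-support}, factor $e^{-tC}=e^{-t\Gamma}e^{-t(\alpha_*e_n^\top+e_m\beta_*^\top)}$ and absorb the rank-one part into the scalings. Then $P(t)=\mathcal P(X^0\odot e^{-t\Gamma};a,b)$. Entries off $S$ carry the factor $e^{-t\gamma}$, where $\gamma\coleq\min_{S^c}\Gamma>0$, while on $S$ the matrix equals $X^0|_S$.
\begin{itemize}
\item The problem restricted to $S$ with marginals $(a,b)$ is feasible, with $\operatorname{ri}$ solution $P_*$.
\item The implicit-function Lipschitz bound~\eqref{eq:ot-projection-lipschitz}, applied on a compact neighborhood, gives $\|P(t)-P_*\|_F=O(e^{-\gamma t})$.
\item Here the scalings must be normalized per connected component of the support graph of $P_*$, using Lemma~\ref{lem:ot-marginal-sums}. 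A uniform bound on the scaling vectors follows from $P_*>0$ on $S$.
\end{itemize}
This step also yields the characterization of $P_*$ as the Bregman projection and the strict complementarity claimed in (ii), once the dual limit is identified.

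\textbf{Step 2: residual contraction.} This is the main obstacle. Linearize the two-primal recurrence~\eqref{eq:ot-two-primal-recurrence} in the residual variables $(r_Y^k,c_X^k)$.
\begin{itemize}
\item By~\eqref{eq:ot-shifted-projections}, $X^k$ and $Y^{k}$ are smooth functions of $(t_k,c_X^k)$ and $(t_{k-1},r_Y^k)$, through the projection map.
\item One sweep then gives, to first order, $c_X^{k+1}\approx(1-s)c_X^k+\text{(coupling through }P(t_k))$ plus an $O(1/\rho)$ drift from $t_k\to t_{k+1}$.
\item The coupling operator is built from the row- and column-normalized transition matrices $D_a^{-1}P$ and $D_b^{-1}P^\top$. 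These are stochastic matrices, self-adjoint with respect to $\langle\cdot,\cdot\rangle_a$ and $\langle\cdot,\cdot\rangle_b$, with spectrum in $[-1,1]$.
\item I would aim to show that the linear iteration matrix has spectral radius at most $\max(|1-s|,\lambda)<1$ on the subspace $a^\top r=b^\top c=0$, component by component. Here $s\in(0,2)$ enters.
\end{itemize}

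The difficulty is that as $t_k\to\infty$ the transition matrices converge to those of $P_*$, which may be disconnected. On each component the unit eigenvector is killed by the zero-sum constraint only componentwise, not globally. I would handle this by showing that the inter-component residual modes are driven only by the tiny off-$S$ mass, which is $O(e^{-\gamma t_k})$. This requires $\rho$ large, so that the drift between successive $t_k$ is small relative to the contraction. The plan is then to prove an inequality $\|(r_Y^{k+1},c_X^{k+1})\|\le q\|(r_Y^k,c_X^k)\|+Ke^{-\gamma t_k}/\rho$ by induction, keeping the iterates in a compact region where~\eqref{eq:ot-projection-lipschitz} applies. The choice of $\rho_0$ is made so that $q<1$ uniformly along the path and the initial residual (zero at $k=1$) stays in the basin.

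\textbf{Step 3: conclusion.}
\begin{itemize}
\item Steps 1 and 2 together with~\eqref{eq:ot-projection-lipschitz} give $\|X^k-P_*\|_F+\|Y^k-P_*\|_F\le Mq^k$.
\item For $Z$, use $Z^k=-\rho e_m(z^k)^\top$ and~\eqref{eq:ot-vector-scaling}: the increments $s\log(1+c_X^k)$ are geometrically summable. So $z^k$ converges R-linearly, which also gives finite length in (i).
\item Passing to the limit in~\eqref{eq:sub-kkt} shows that $(P_*,P_*,Z_\infty)$ is KKT. The dual $(\alpha,\beta)$ built from $\lim r^k$ and $z^\infty$ has slack vanishing on $S$.
\item Positivity of the slack on $S^c$ follows because $\log X^k_{ij}\to-\infty$ there at rate $\gamma t_k$. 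This forces $\Gamma_{ij}=\gamma$-type positive limits, which gives strict complementarity.
\end{itemize}
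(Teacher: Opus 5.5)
There is a genuine gap. Your reduction to geometric residual decay plus exponential path convergence fails when the bipartite support graph of $P_*$ has several connected components. This is the case of non-unique OT duals, and the theorem must cover it.

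\textbf{Step 2 has no uniform contraction.} Take pairs $(r,c)$ equal to $\gamma_d$ on $I_d$ and $\delta_d$ on $J_d$, with $\sum_d\gamma_d\sum_{i\in I_d}a_i=\sum_d\delta_d\sum_{j\in J_d}b_j=0$. They lie in $\{a^\top r=b^\top c=0\}$, and $B_{P_*}$ acts on them by $(\gamma_d,\delta_d)\mapsto(-\delta_d,\gamma_d+2\delta_d)$. This is a Jordan block with double eigenvalue $1$, for every $s$. So the spectral radius of $B_{P(t)}$ on that subspace tends to $1$ as $t\to\infty$. No $\rho_0$ gives a uniform $q<1$ along the infinite path, in any norm. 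Separately, the Taylor constants of the one-step map (Lemma~\ref{lem:ot-residual-one-step}) are uniform only on compact sets of positive matrices, which $P(t)$ leaves when $S^c\neq\varnothing$.

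\textbf{Your remedy is circular.} You propose that these modes are driven by an off-$S$ mass of size $O(e^{-\gamma t_k})$. But the identity $\sum_{i\in I_d}a_ir^k_{Y,i}=\sum_{i\in I_d,\,j\notin J_d}Y^k_{ij}-\sum_{i\notin I_d,\,j\in J_d}Y^k_{ij}$ bounds them by $\tau(Y^k)$ of the actual iterate, not of $P(t_k)$. In turn, $\tau(Y^k)$ is set by the iterate's relative per-component scalings, i.e.\ by the accumulated inter-component drift of $z^k$. That drift is precisely the neutral mode you need to control, and nothing in your plan gives an a priori bound on $\tau(Y^k)$.

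Step~1 has a related flaw. $P_*>0$ on $S$ fixes the scalings of $X^0\odot e^{-t\Gamma}$ only up to one constant per component. The inter-component constants can grow linearly in $t$, since the path's dual limit need not be $(\alpha_*,\beta_*)$. So a uniform bound on the scaling vectors is not available.

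\textbf{Step 3's strict complementarity has the same defect.} By \eqref{eq:ot-actual-ratio}, off $S$ the iterates contract per step by $e^{-(\Gamma_\infty)_{ij}/\rho}$. Here $\Gamma_\infty$ is the slack of the iterates' own limit $(\alpha_\infty,\rho z_\infty)$, which need not coincide with the certificate used to factor the path. Reading $(\Gamma_\infty)_{ij}>0$ off the path's decay presupposes that the iterates' off-$S$ entries follow the path's with bounded relative error. That is exactly what is missing.

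\textbf{What the paper uses instead.} The missing ingredient is a global potential.
\begin{itemize}
\item The potential $\mathcal E_{P_*}(Y^k,z^k;\beta_*/\rho)$ has a descent threshold independent of $\rho$ (Lemma~\ref{prop:ot-entropy-identity}). It is entered by tracking the path only on a finite horizon $[0,T]$. Its descent gives $\sum_k\tau(Y^k)<\infty$ (Proposition~\ref{prop:ot-feasibility-sum}).
\item The local analysis then linearizes at $P_*$ itself after subtracting component means, with errors $O(\tau(Y^k))$ (Lemma~\ref{lem:ot-supported-estimates}).
\item $\Gamma_\infty>0$ on $S^c$ is proved by contradiction, using the same potential recentred at $z_\infty$ (Lemma~\ref{lem:ot-selected-strict}). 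Only then does $\tau(Y^k)$ decay geometrically, giving the rate.
\end{itemize}

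Your route is plausible when the support graph of $P_*$ is connected. The dual is then unique up to a constant shift, hence strictly complementary, and the contraction is uniform up to $P_*$. Even there, you would need smoothness of $\mathcal P$ up to the boundary near $X^0|_S$, which Proposition~\ref{prop:ot-projection-properties}(iv) does not provide.
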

When $X^0=ab^\top$, the selected solution has maximum
entropy over $\mathcal O$.

\begin{remark}[Comparison with BDRS counterexamples]\label{rem:ot-bdrs-counterexamples}
For $s=1$ and a prescribed infeasible initialization,
Ma~\cite[Theorem~5.1]{ma2026failure} constructs
an OT counterexample whose BDRS iterates fail to converge for every
fixed $\rho>0$.
This does not contradict our convergence result, because
Theorem~\ref{thm:ot-global} additionally requires
$X^0\in\operatorname{ri}\Pi$, $Z^0=0$, and sufficiently large fixed $\rho$.
\end{remark}

For an illustration of the linear convergence result
in Theorem~\ref{thm:ot-global}, we consider $a$ and $b$ obtained by aggregating the $32\times32$ DOTmark CauchyDensity pair
to an $8\times8$ pair by block summation, with a squared Euclidean cost matrix
(see Section~\ref{sec:ot-numerical} for the data and cost construction).
With $X^0=ab^\top$, $Z^0=0$, $s=1$, and fixed $\rho\in\{1,2,5,10\}$,
Figure~\ref{fig:ot-linear-convergence} plots the errors against the iteration
number. Their approximately geometric decay is consistent with the
$R$-linear convergence established in Theorem~\ref{thm:ot-global}.

\begin{figure}[!htbp]
\centering
\includegraphics[width=0.52\textwidth]{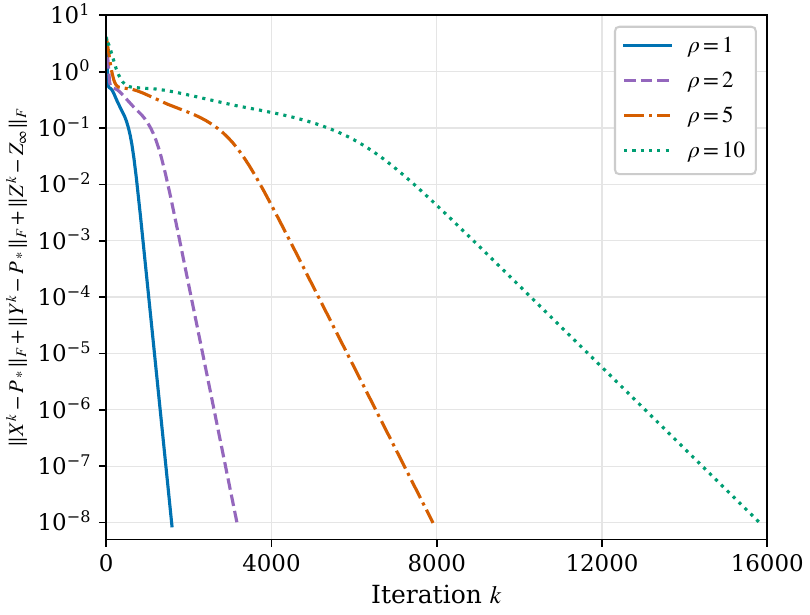}
\caption{Convergence of the primal--dual iterates for fixed penalty parameters.}
\label{fig:ot-linear-convergence}
\end{figure}

Throughout the proof, fix $s\in(0,2)$ and $X^0\in\operatorname{ri}\Pi$,
and consider Algorithm~\ref{alg:ot} with $Z^0=0$.
The penalty $\rho>0$ is fixed throughout each run of Algorithm~\ref{alg:ot}.
Figure~\ref{fig:ot-proof-flowchart} shows the three stages of the proof
and their dependencies.

\begin{figure}[!tp]
\centering
\begingroup
\definecolor{proofblue}{RGB}{55,103,158}
\definecolor{proofteal}{RGB}{35,119,109}
\definecolor{proofamber}{RGB}{160,105,35}
\definecolor{proofink}{RGB}{57,67,79}
\begin{tikzpicture}[
  font=\small,
  proofbox/.style={draw,line width=.65pt,rounded corners=2.5pt,
    fill=white,text=black,align=center,text width=12.6cm,
    inner xsep=5pt,inner ysep=5pt},
  proofhalf/.style={proofbox,text width=5.6cm,minimum height=1.65cm},
  grouptitle/.style={font=\small\bfseries,align=left,
    text width=12.6cm,inner xsep=0pt,inner ysep=1pt},
  proofgroup/.style={rounded corners=5pt,line width=.65pt,
    dash pattern=on 3pt off 2pt,inner xsep=8pt,inner ysep=8pt},
  proofarrow/.style={-{Stealth[length=2.4mm,width=1.7mm]},
    draw=proofink,line width=.85pt,shorten <=1.5pt,shorten >=1.5pt},
  reuse/.style={proofarrow,draw=proofteal!85!black,
    dash pattern=on 2.5pt off 2pt}
]
\node[proofbox,draw=proofink!55,fill=proofink!4,minimum height=.7cm]
(start) {Algorithm~\ref{alg:ot}: \mbox{$X^0\in\operatorname{ri}\Pi$},\; \mbox{$Z^0=0$},\; \mbox{$0<s<2$}};

\node[grouptitle,text=proofblue] (title1)
  at ($(start.south)+(0,-1cm)$)
  {1. Small potential and summable errors\hfill Section~\ref{sec:ot-feasibility}};
\node[proofhalf,draw=proofblue!75,anchor=north] (tracking)
  at ($(title1.south)+(-3.5cm,-.45cm)$) {
\textbf{Finite-time tracking}\\[3pt]
Lemma~\ref{lem:ot-path-one-step}\\[2pt]
Primal iterates stay close to the entropy path.
};
\node[proofhalf,draw=proofblue!75,anchor=north] (descent)
  at ($(title1.south)+(3.5cm,-.45cm)$) {
\textbf{Bregman descent}\\[3pt]
Lemma~\ref{prop:ot-entropy-identity}\\[2pt]
A sufficiently small\\potential remains small.
};
\node[proofbox,draw=proofblue!75,minimum height=1.5cm,anchor=north] (capture)
  at ($(tracking.south)+(3.5cm,-.85cm)$) {
\textbf{Small potential and summable errors} --- Proposition~\ref{prop:ot-feasibility-sum}\\[3pt]
Choose small $h$, then $T$, then a sufficiently large fixed $\rho$.\\[5pt]
\mbox{$\mathcal E_{P_*}(Y^k,z^k;\beta_*/\rho)\leq h\;\ (k\geq N)$},
\quad \mbox{$\sum_k\tau(Y^k)<\infty$}
};

\node[grouptitle,text=proofteal] (title2)
  at ($(capture.south)+(0,-1.35cm)$)
  {2. KKT convergence and finite length\hfill Section~\ref{sec:ot-kkt-convergence}};
\node[proofbox,draw=proofteal!75,minimum height=1.25cm,anchor=north] (support)
  at ($(title2.south)+(0,-.45cm)$) {
\textbf{Primal errors and \mbox{$\log(e_n+c_X^k)$}} --- Lemma~\ref{lem:ot-supported-estimates}\\[3pt]
Contraction (Lemma~\ref{lem:ot-linear-contraction})
$+$ bound on \mbox{$\|Q-P_*\|_F$} (Lemma~\ref{lem:ot-support-recovery})
};
\node[proofhalf,draw=proofteal!75,anchor=north] (summable)
  at ($(support.south)+(-3.5cm,-.85cm)$) {
\textbf{Summability and convergence}\\[3pt]
Proposition~\ref{lem:ot-primal-convergence}\\[2pt]
$X^k,Y^k\to P_*$, $z^k\to z_\infty$
};
\node[proofhalf,draw=proofteal!75,anchor=north] (kkt)
  at ($(support.south)+(3.5cm,-.85cm)$) {
\textbf{KKT conditions and finite length}\\[3pt]
Proposition~\ref{lemma:ot-feasibility-kkt}\\[2pt]
Theorem~\ref{thm:ot-global}(i), convergence in (ii)
};

\coordinate (middle2) at ($(summable.south)!.5!(kkt.south)$);
\node[grouptitle,text=proofamber] (title3)
  at ($(middle2)+(0,-1.35cm)$)
  {3. Strict complementarity and linear rate\hfill Section~\ref{sec:ot-strict-linear}};
\node[proofhalf,draw=proofamber!75,minimum height=2.3cm,anchor=north] (strict)
  at ($(title3.south)+(-3.5cm,-.45cm)$) {
\textbf{Strict complementarity}\\[3pt]
Lemma~\ref{lem:ot-selected-strict}\\[2pt]
\mbox{$\Gamma_\infty>0$ on $S^c$}\\[2pt]
Completes Theorem~\ref{thm:ot-global}(ii)
};
\node[proofhalf,draw=proofamber!75,minimum height=2.3cm,anchor=north] (rate)
  at ($(title3.south)+(3.5cm,-.45cm)$) {
\textbf{Linear convergence}\\[3pt]
Proposition~\ref{lem:ot-linear-length}\\[2pt]
Geometric decay of $\tau(Y^k)$\\[2pt]
Theorem~\ref{thm:ot-global}(iii)
};
\begin{scope}[on background layer]
\node[proofgroup,draw=proofblue!60,fill=proofblue!4,
  fit=(title1)(tracking)(descent)(capture)] (group1) {};
\node[proofgroup,draw=proofteal!60,fill=proofteal!4,
  fit=(title2)(support)(summable)(kkt)] (group2) {};
\node[proofgroup,draw=proofamber!60,fill=proofamber!5,
  fit=(title3)(strict)(rate)] (group3) {};
\end{scope}

\draw[proofarrow] (start.south) -- (group1.north);
\draw[proofarrow] (group1.south) -- (group2.north);
\draw[proofarrow] (group2.south) -- (group3.north);
\draw[proofarrow] (tracking.south) -- (tracking.south |- capture.north);
\draw[proofarrow] (descent.south) -- (descent.south |- capture.north);
\draw[proofarrow] (summable.north |- support.south) -- (summable.north);
\draw[proofarrow] (summable.east) -- (kkt.west |- summable.east);
\draw[proofarrow] (strict.east) -- (rate.west);
\draw[reuse] (support.west) -- (-7.4cm,0 |- support.west)
  |- (strict.west);
\draw[reuse] (support.east) -- (7.4cm,0 |- support.east)
  |- (rate.east);
\end{tikzpicture}
\endgroup\space
\caption{Proof of Theorem~\ref{thm:ot-global}, organized into three parts.
Dashed arrows show the use of Lemma~\ref{lem:ot-supported-estimates}
in the proofs of strict complementarity and linear convergence.}
\label{fig:ot-proof-flowchart}
\end{figure}

\subsection{Small potential and summable errors}\label{sec:ot-feasibility}
We first control the marginal residuals $(r_Y^k,c_X^k)$ defined
in \eqref{eq:ot-marginal-residuals}
to show that the primal iterates
track the entropy PPA path over each finite time interval.
We then use Bregman descent to prove summability of the two Bregman
distances and the sums of entries outside $S$
in~\eqref{eq:ot-feasibility-summable}.

For $P\in\Pi$, define the row and column weighted-average operators
$M_P\coleq\Diag(a)^{-1}P$ and $N_P\coleq\Diag(b)^{-1}P^\top$.
For the fixed $s\in(0,2)$, define
\begin{equation}\label{eq:ot-residual-linear}
B_P(r,c)\coleq\big(-M_Pc,\ N_Pr+[(1-s)I+(1+s)N_PM_P]c\big).
\end{equation}
Lemma~\ref{lem:ot-residual-one-step} shows that $B_P$ gives the linear
part of the marginal residual update. We first establish its contraction property.
Let $I_d,J_d$ be the row and column index sets of the connected
components of $P$ from Definition~\ref{def:ot-support-graph}.
We consider the subspace defined by
\begin{equation}\label{eq:ot-component-zero-sums}
\mathcal{L}_{P} \;:=\; \Big\{ (r,c)\in\R^m\times\R^n \;\Big|\; \sum_{i\in I_d}a_ir_i=0,\quad
\sum_{j\in J_d}b_jc_j=0\quad\text{for every }d\Big\}.
\end{equation}
On this subspace, define the following quantity, which
Lemma~\ref{lem:ot-linear-contraction} shows to be a norm:
\begin{equation}\label{eq:ot-residual-norm}
\|(r,c)\|_P
\coleq\Big(\|r\|_a^2-\|N_Pr\|_b^2+s^{-1}\|c+N_Pr\|_b^2\Big)^{1/2}.
\end{equation}
\begin{lemma}[Contraction of the linear residual update]\label{lem:ot-linear-contraction}
The following statements hold for $s\in(0,2)$.
\begin{enumerate}[label=(\roman*),leftmargin=*]
\item For every $P\in\Pi$, the expression in~\eqref{eq:ot-residual-norm}
defines a norm on the subspace $\mathcal{L}_P$ in~\eqref{eq:ot-component-zero-sums},
and there exists $q_P\in(0,1)$ such that
$\|B_P(r,c)\|_P\leq q_P\|(r,c)\|_P$ on this subspace.
\item For every nonempty compact set $K\subseteq\Pi$ whose matrices
have the same support, there exist $m_K,M_K>0$ and $q_K\in(0,1)$ such that
\begin{equation*}
m_K\|(r,c)\|\leq\|(r,c)\|_P\leq M_K\|(r,c)\|,
\qquad \|B_P(r,c)\|_P\leq q_K\|(r,c)\|_P
\end{equation*}
for every $P\in K$ and every $(r,c)$ satisfying~\eqref{eq:ot-component-zero-sums}.
\end{enumerate}
\end{lemma}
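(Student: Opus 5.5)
The plan is to verify three algebraic facts about $M_P$ and $N_P$ and then expand the quadratic form. \emph{Adjointness:} since $Pe_n=a$ and $P^\top e_m=b$,
\[
\langle M_Pc,r\rangle_a=r^\top Pc=\langle c,N_Pr\rangle_b .
\]
\emph{Stochasticity:} $M_P$ and $N_P$ are row-stochastic, i.e.\ averaging operators. Jensen's inequality then gives $\|N_Pr\|_b\leq\|r\|_a$ and $\|M_Pc\|_a\leq\|c\|_b$. Equality in $\|N_Pr\|_b^2=\langle r,M_PN_Pr\rangle_a\leq\|r\|_a^2$ holds iff $r$ is a fixed point of the averaging $M_PN_P$. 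By connectivity of each component of the bipartite support graph, this happens iff $r$ is constant on each $I_d$; the analogous statement holds for $c$ on each $J_d$. \emph{Invariance:} $\mathcal L_P$ is invariant under $B_P$, because
\[
\sum_{i\in I_d}a_i(M_Pc)_i=\sum_{i\in I_d,\,j\in J_d}P_{ij}c_j=\sum_{j\in J_d}b_jc_j ,
\]
using $P_{I_d,J_e}=0$ for $d\neq e$, and similarly for $N_Pr$.

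\textbf{Norm property.} Let $Q_P(r,c)$ denote the expression inside the square root in~\eqref{eq:ot-residual-norm}. It is a quadratic form, and it is nonnegative by the Jensen bound. If $Q_P(r,c)=0$, then $\|N_Pr\|_b=\|r\|_a$, so $r$ is constant on each $I_d$. The zero-sum conditions in $\mathcal L_P$ then force $r=0$. Next, $c+N_Pr=0$ gives $c=0$. Hence $Q_P$ is positive definite on $\mathcal L_P$, and its square root is a norm there.

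\textbf{Contraction.} Write $(r',c')=B_P(r,c)$ and $u\coleq c+N_Pr$, so that
\[
r'=-M_Pc,\qquad c'+N_Pr'=u-s(c-N_PM_Pc).
\]
I will expand $Q_P(r',c')-Q_P(r,c)$ using adjointness, aiming for an expression of the form
\[
Q_P(r',c')-Q_P(r,c)=-\tfrac{2-s}{s}\,\|\cdot\|^2-(\text{further nonnegative terms}),
\]
where the nonnegative terms are of the type $\|c\|_b^2-\|M_Pc\|_a^2$ or $\|r\|_a^2-\|N_Pr\|_b^2$. This expansion is the main obstacle. It requires grouping cross terms such as $\langle u,c-N_PM_Pc\rangle_b$ correctly. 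The factor $s^{-1}$ in the norm is presumably tuned so that these cross terms combine into the $(2-s)/s$ coefficient, which is positive exactly for $s\in(0,2)$. If a clean identity does not appear, my fallback is to show that the difference is a negative semidefinite form.

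Then I will check the equality case. If the decrease is zero, the vanishing terms force $c$ to be constant on components, hence $c=0$ in $\mathcal L_P$. They then force $N_Pr=0$ or $r$ constant on components, hence $r=0$. Thus $Q_P(B_P x)<Q_P(x)$ for every nonzero $x\in\mathcal L_P$. The ratio $\|B_Px\|_P/\|x\|_P$ is continuous on the compact unit sphere of $\mathcal L_P$, so it attains a maximum $q_P<1$.

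\textbf{Part (ii).} All $P\in K$ share the same support, so they have the same components $I_d,J_d$ and the same subspace $\mathcal L\coleq\mathcal L_P$. The map $(P,x)\mapsto Q_P(x)$ is continuous and positive on the compact set $K\times\{x\in\mathcal L:\|x\|=1\}$. Its minimum and maximum therefore give $m_K^2$ and $M_K^2$. Likewise, the continuous function $(P,x)\mapsto\|B_Px\|_P/\|x\|_P$ is pointwise $<1$ on this compact set, by part (i). Its maximum is the required $q_K<1$.
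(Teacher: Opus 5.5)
Your preliminary facts (adjointness, the averaging and fixed-point characterization, invariance of $\mathcal L_P$ under $B_P$), the norm property, the compactness argument for $q_P$, and part (ii) are all sound and match the paper. The gap is the contraction inequality itself, which is the real content of (i).

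You announce an expansion of $Q_P(B_Px)-Q_P(x)$ but never carry it out. Your fallback, ``show the difference is negative semidefinite,'' only restates what must be proved, and your equality-case argument depends on terms you have not identified. The guessed shape is also not quite right. The coefficient is $2-s$, not $(2-s)/s$. The second nonnegative term is not a separate expression in $r$ or in $c$; it is the form $\|x\|_a^2-\|N_Px\|_b^2$ evaluated at the mixed vector $x=r+M_Pc$.

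The missing step is an explicit energy identity. With your $u=c+N_Pr$, set $w=(I-N_PM_P)c$ and expand
\[
s^{-1}\|u-sw\|_b^2=s^{-1}\|u\|_b^2-2\langle u,w\rangle_b+s\|w\|_b^2 .
\]
The term $s^{-1}\|u\|_b^2$ cancels exactly against the term $s^{-1}\|c+N_Pr\|_b^2$ of $Q_P(r,c)$; this cancellation is what the weight $s^{-1}$ is tuned for. Adjointness gives
\[
\langle c,w\rangle_b=\|c\|_b^2-\|M_Pc\|_a^2=\|M_Pc\|_a^2-\|N_PM_Pc\|_b^2+\|w\|_b^2,\qquad
\langle N_Pr,w\rangle_b=\langle r,M_Pc\rangle_a-\langle N_Pr,N_PM_Pc\rangle_b .
\]
Substituting these and completing the square in $r+M_Pc$ yields
\[
\|(r,c)\|_P^2-\|B_P(r,c)\|_P^2=\|r+M_Pc\|_a^2-\|N_P(r+M_Pc)\|_b^2+(2-s)\|(I-N_PM_P)c\|_b^2 .
\]
Both terms are nonnegative, by your Jensen bound applied to $r+M_Pc$ and by $s<2$.

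For the equality case, suppose the difference vanishes. Then $N_PM_Pc=c$, so $c$ is a fixed point of the averaging $N_PM_P$. Hence $c$ is constant on each $J_d$, and therefore $c=0$ in $\mathcal L_P$. The first term then reduces to $\|r\|_a^2-\|N_Pr\|_b^2=0$, which gives $r=0$. With this identity in place, the rest of your argument goes through as written.
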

\begin{proof}
(i) We first show that the quantity under the square root
in~\eqref{eq:ot-residual-norm} is nonnegative. By definition,
\begin{equation*}
\|(r,c)\|_P^2
=\big(\|r\|_a^2-\|N_Pr\|_b^2\big)+s^{-1}\|c+N_Pr\|_b^2.
\end{equation*}
The second term is nonnegative because $s>0$.
For the first term, $Pe_n=a$, $P^\top e_m=b$ and
$b_j(N_Pr)_j=\sum_iP_{ij}r_i$ give
\begin{equation}\label{eq:ot-weighted-row-difference}
\begin{aligned}
0 \;\leq\; \sum_{i,j}P_{ij}\big(r_i-(N_Pr)_j\big)^2
&=\sum_i a_ir_i^2-2\sum_j b_j(N_Pr)_j^2+\sum_j b_j(N_Pr)_j^2\\
&=\|r\|_a^2-\|N_Pr\|_b^2.
\end{aligned}
\end{equation}
Thus both terms are nonnegative for every $(r,c)$.

We next show that $\|(r,c)\|_P=0$, together with
\eqref{eq:ot-component-zero-sums}, implies $r=0$ and $c=0$.
Suppose these conditions hold. Since the two nonnegative terms in
$\|(r,c)\|_P^2$ sum to zero, each term must vanish.
It follows from~\eqref{eq:ot-weighted-row-difference} that
$r_i=(N_Pr)_j$ whenever $P_{ij}>0$.
Consequently, if $P_{ij}>0$ and $P_{i'j}>0$, then
$r_i=(N_Pr)_j=r_{i'}$.
For any $i,i'\in I_d$, connectedness gives row indices
$i=i_0,i_1,\ldots,i_\ell=i'$ and column indices $j_1,\ldots,j_\ell$
such that
\begin{equation*}
P_{i_{t-1}j_t}>0,\qquad P_{i_tj_t}>0,\qquad t=1,\ldots,\ell.
\end{equation*}
Applying the preceding equality successively gives
$r_i=r_{i_1}=\cdots=r_{i_\ell}=r_{i'}$.
Thus $r_i=\gamma_d$ for all $i\in I_d$, for some constant $\gamma_d$.
Condition~\eqref{eq:ot-component-zero-sums} gives
$0=\sum_{i\in I_d}a_ir_i=\gamma_d\sum_{i\in I_d}a_i$.
Since $\sum_{i\in I_d}a_i>0$, this forces $\gamma_d=0$ for every $d$,
hence $r=0$.
The vanishing second term now gives $c=0$.
Thus $\|(r,c)\|_P^2$ is a positive definite quadratic form on the stated
subspace, and $\|\cdot\|_P$ is a norm there.

Before comparing the norms of $(r,c)$ and $B_P(r,c)$, we first verify that $(r,c)\in \mathcal{L}_P$ implies that 
$B_P(r,c)\in \mathcal{L}_P$.
Since $P$ has no entries between distinct connected components,
\begin{equation*}
\begin{aligned}
\sum_{i\in I_d}a_i(M_Pc)_i
&=\sum_{i\in I_d,j\in J_d}P_{ij}c_j
 =\sum_{j\in J_d}b_jc_j,\\
\sum_{j\in J_d}b_j(N_Pr)_j
&=\sum_{i\in I_d,j\in J_d}P_{ij}r_i
 =\sum_{i\in I_d}a_ir_i.
\end{aligned}
\end{equation*}
Thus $M_Pc$, $N_Pr$ and $N_PM_Pc$ have zero weighted sums on their
respective components whenever $(r,c)$ satisfies
\eqref{eq:ot-component-zero-sums}. Both components of $B_P(r,c)$
in~\eqref{eq:ot-residual-linear} consequently satisfy the same conditions.

We next show that $\|B_P(r,c)\|_P^2 \leq \|(r,c)\|_P^2$.
From the definitions of $M_P,N_P$, we have
\begin{equation}\label{eq:ot-weighted-adjoint}
\langle M_Pc,r\rangle_a=r^\top Pc=\langle c,N_Pr\rangle_b.
\end{equation}
Substituting the two components in~\eqref{eq:ot-residual-linear}
into the square of~\eqref{eq:ot-residual-norm} and expanding the last term gives
\begin{equation}\label{eq:ot-linear-norm-expansion}
\begin{aligned}
\|B_P(r,c)\|_P^2
&=\|M_Pc\|_a^2-\|N_PM_Pc\|_b^2
 +s^{-1}\|c+N_Pr-s(I-N_PM_P)c\|_b^2\\
&=\|M_Pc\|_a^2-\|N_PM_Pc\|_b^2+s^{-1}\|c+N_Pr\|_b^2\\
&\quad-2\langle c+N_Pr,(I-N_PM_P)c\rangle_b
 +s\|(I-N_PM_P)c\|_b^2.
\end{aligned}
\end{equation}
Applying~\eqref{eq:ot-weighted-adjoint} to $(M_Pc,c)$ and $(r,c)$,
and expanding $\|(I-N_PM_P)c\|_b^2$, we obtain
\begin{equation}\label{eq:ot-energy-inner-products}
\begin{aligned}
\langle c,(I-N_PM_P)c\rangle_b
&=\|c\|_b^2-\|M_Pc\|_a^2
 =\|M_Pc\|_a^2-\|N_PM_Pc\|_b^2+\|(I-N_PM_P)c\|_b^2,\\
\langle N_Pr,(I-N_PM_P)c\rangle_b
&=\langle r,M_Pc\rangle_a-\langle N_Pr,N_PM_Pc\rangle_b.
\end{aligned}
\end{equation}
Subtracting~\eqref{eq:ot-linear-norm-expansion} from the square
of~\eqref{eq:ot-residual-norm} cancels $s^{-1}\|c+N_Pr\|_b^2$.
Substituting~\eqref{eq:ot-energy-inner-products} into the remaining terms gives
\begin{equation}\label{eq:ot-residual-energy}
\|(r,c)\|_P^2-\|B_P(r,c)\|_P^2
=\|r+M_Pc\|_a^2-\|N_P(r+M_Pc)\|_b^2
 +(2-s)\|(I-N_PM_P)c\|_b^2.
\end{equation}

Applying~\eqref{eq:ot-weighted-row-difference} with $r$ replaced by
$r+M_Pc$ shows that the difference of the first two terms on the right-hand side
of~\eqref{eq:ot-residual-energy} is nonnegative. Since $2-s>0$,
the second term is also nonnegative. Hence
$\|B_P(r,c)\|_P\leq\|(r,c)\|_P$.
Now we show that equality in this inequality holds only when
$(r,c)=(0,0)$. Suppose equality holds. Both nonnegative terms
in~\eqref{eq:ot-residual-energy} must then vanish, giving $N_PM_Pc=c$.
By~\eqref{eq:ot-weighted-adjoint},
$\langle c,N_PM_Pc\rangle_b=\|M_Pc\|_a^2$, so
\begin{equation*}
\sum_{i,j}P_{ij}\big(c_j-(M_Pc)_i\big)^2
=\|c\|_b^2-\|M_Pc\|_a^2
=\langle c,(I-N_PM_P)c\rangle_b=0.
\end{equation*}
Thus $c_j=(M_Pc)_i$ whenever $P_{ij}>0$.
The connected-component argument used to prove positive definiteness,
with rows and columns interchanged, shows that $c$ is constant on
each $J_d$. By~\eqref{eq:ot-component-zero-sums} and
$\sum_{j\in J_d}b_j>0$, each constant is zero, so $c=0$.
The first difference in~\eqref{eq:ot-residual-energy} now reduces to
$\|r\|_a^2-\|N_Pr\|_b^2=0$.
As established after~\eqref{eq:ot-weighted-row-difference}, this equality
makes $r$ constant on each $I_d$.
Condition~\eqref{eq:ot-component-zero-sums} then gives $r=0$.
It follows that $\|B_P(r,c)\|_P<\|(r,c)\|_P$ for every nonzero pair $(r,c)$
in the subspace $\mathcal{L}_P$.

We next choose a contraction factor $q_P<1$ that works for all pairs
in the subspace, with $P$ fixed.
If the subspace is $\{(0,0)\}$, any $q_P\in(0,1)$ suffices.
Otherwise its unit sphere in $\|\cdot\|_P$ is nonempty and compact.
The continuous function $(r,c)\mapsto\|B_P(r,c)\|_P$ attains its
maximum on this sphere. The strict inequality just proved makes
that maximum less than one. Choose $q_P\in(0,1)$ not smaller than
this maximum. Normalizing each nonzero pair by $\|(r,c)\|_P$ and
using homogeneity gives~\textup{(i)}; the zero pair satisfies the
bound as well.

(ii) Finally, we choose $m_K,M_K>0$ and $q_K\in(0,1)$, independent of
$P\in K$, so that both inequalities in~\textup{(ii)} hold.
The common support gives the same connected components for every
$P\in K$, so~\eqref{eq:ot-component-zero-sums} defines one subspace.
If this subspace is $\{(0,0)\}$, any $m_K,M_K>0$ and $q_K\in(0,1)$
satisfy~\textup{(ii)}. Otherwise the product of $K$ and the Euclidean
unit sphere of this subspace is nonempty and compact.
On this product, the functions
\begin{equation*}
(P,r,c)\mapsto\|(r,c)\|_P,
\qquad
(P,r,c)\mapsto\frac{\|B_P(r,c)\|_P}{\|(r,c)\|_P}
\end{equation*}
are continuous by~\eqref{eq:ot-residual-linear}--\eqref{eq:ot-residual-norm}.
Part~\textup{(i)} shows that the first function is positive and the
second is strictly less than one at every point.
They therefore attain a positive minimum $m_K$ and a finite maximum
$M_K$ for the first function, and a maximum less than one for the
second. Choose $q_K\in(0,1)$ not smaller than the latter maximum.
Normalizing each nonzero pair by its Euclidean norm and using
homogeneity gives the bounds in~\textup{(ii)} for all pairs satisfying
\eqref{eq:ot-component-zero-sums}; the zero pair also satisfies them.
\end{proof}

Write $P^k\coleq P((k-1)/\rho)$, where $P(\cdot)$ is defined in
\eqref{eq:ot-entropy-path}. The next lemma bounds
$\|Y^k-P^k\|_F$ and $\|X^k-P^{k+1}\|_F$ in terms of
$r_Y^k$ and $c_X^k$, respectively.

\begin{lemma}[Primal error bounds from marginal residuals]\label{lem:ot-path-reconstruction}
Fix $0\leq T<\infty$. There exist $\delta_0\in(0,1/2]$ and
$L_1>0$ such that, for every $\rho\geq1$ and
$1\leq k\leq\lfloor T\rho\rfloor+1$, the iterates satisfy
\begin{equation*}
Y^k=\mathcal P(P^k;a\odot(e_m+r_Y^k),b),\qquad
X^k=\mathcal P(P^{k+1};a,b\odot(e_n+c_X^k)).
\end{equation*}
Moreover, $\|(r_Y^k,c_X^k)\|\leq\delta_0$ implies
\begin{equation}\label{eq:ot-tracking-reconstruction}
\|Y^k-P^k\|_F\leq L_1\|r_Y^k\|,\qquad
\|X^k-P^{k+1}\|_F\leq L_1\|c_X^k\|.
\end{equation}
\end{lemma}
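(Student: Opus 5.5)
The plan has two steps: first derive the two projection identities from Proposition~\ref{prop:ot-iterate-projections}(ii) and the scaling invariance~\eqref{eq:ot-projection-invariance}, and then get the Lipschitz bounds from Proposition~\ref{prop:ot-projection-properties}(iv) on a suitable compact set.

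\textbf{The identities.} By~\eqref{eq:ot-shifted-projections} with $k-1$ in place of $k$ (valid for $k\geq2$), $Y^k=\mathcal P(X^0\odot e^{-(k-1)C/\rho};Y^ke_n,b)$. By definition, $P^k=P((k-1)/\rho)=\mathcal P(X^0\odot e^{-(k-1)C/\rho};a,b)$. Proposition~\ref{prop:ot-projection-properties}(i) shows that $P^k$ is a positive diagonal scaling of $X^0\odot e^{-(k-1)C/\rho}$. Hence~\eqref{eq:ot-projection-invariance} lets us replace that matrix by $P^k$ inside the projection. Since $Y^ke_n=a\odot(e_m+r_Y^k)$, this gives the first identity.

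The case $k=1$ is direct: $Y^1=X^0=P^1$ and $r_Y^1=0$, and $X^0$ is trivially the projection of itself onto its own marginals. For $X^k$, the second identity in~\eqref{eq:ot-shifted-projections} gives $X^k=\mathcal P(X^0\odot e^{-kC/\rho};a,(X^k)^\top e_m)$. Here $P^{k+1}=P(k/\rho)$ is a diagonal scaling of the same matrix, and $(X^k)^\top e_m=b\odot(e_n+c_X^k)$, which gives the second identity. Note that none of this uses the restriction $k\leq\lfloor T\rho\rfloor+1$; that restriction matters only for the estimate.

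\textbf{The estimates.} For $1\leq k\leq\lfloor T\rho\rfloor+1$, the times $(k-1)/\rho$ and $k/\rho$ lie in $[0,T+1]$ because $\rho\geq1$. By Proposition~\ref{prop:ot-entropy-path}, the path $P(t)$ is positive and continuous, so $\mathcal K_T\coleq\{P(t):t\in[0,T+1]\}$ is a compact set of positive matrices with $\min_{\mathcal K_T}\min_{ij}W_{ij}>0$.

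Choose $\delta_0\leq1/2$. Then $\|r\|\leq\delta_0$ implies $a\odot(e_m+r)\geq a/2>0$, and similarly for $b$. Let
\[
K\coleq\{(W,a\odot(e_m+r),b):W\in\mathcal K_T,\ \|r\|\leq\delta_0,\ a^\top r=0\}\cup\{(W,a,b\odot(e_n+c)):W\in\mathcal K_T,\ \|c\|\leq\delta_0,\ b^\top c=0\}.
\]
This is compact and lies in the domain of Proposition~\ref{prop:ot-projection-properties}(iv); the mass balance holds because $a^\top r_Y^k=b^\top c_X^k=0$. Let $L_1\coleq L_K\max(\|a\|_\infty,\|b\|_\infty)$. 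Comparing $\mathcal P(P^k;a\odot(e_m+r_Y^k),b)$ with $\mathcal P(P^k;a,b)=P^k$ through~\eqref{eq:ot-projection-lipschitz} gives $\|Y^k-P^k\|_F\leq L_K\|a\odot r_Y^k\|\leq L_1\|r_Y^k\|$. Here $\mathcal P(P^k;a,b)=P^k$ holds because $P^k\in\Pi$ is its own projection, and the two sets in the union can be combined into one compact set if a single set is preferred. The same comparison gives the bound for $X^k$ against $P^{k+1}$.

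\textbf{Main obstacle.} The main point is uniformity in $\rho$. The constants must not depend on $\rho$, yet the number of indices grows like $T\rho$. This is handled by the observation above that every relevant path point lies in the fixed compact set $\mathcal K_T$ whenever $\rho\geq1$. The remaining care is just the shift $k\mapsto k-1$ between~\eqref{eq:ot-shifted-projections} and the definition of $P^k$, together with the base case $k=1$.
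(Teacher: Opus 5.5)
Your proof is correct and follows essentially the same route as the paper. You obtain the identities from \eqref{eq:ot-shifted-projections} (with the index shift for $k\geq2$ and the separate base case $k=1$) combined with the scaling invariance \eqref{eq:ot-projection-invariance}, and the estimates from the Lipschitz bound of Proposition~\ref{prop:ot-projection-properties}(iv) on a compact set built from $\{P(t):0\le t\le T+1\}$ and mass-balanced residuals of norm at most $\delta_0\le 1/2$, with $L_1=L_K\max\{\|a\|_\infty,\|b\|_\infty\}$; your explicit description of that compact set just makes precise the paper's remark that the triples range over a compact subset of the domain.
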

\begin{proof}
The path $\{P(t):0\leq t\leq T+1\}$ is compact and positive.
Its entries are therefore bounded away from zero, and its derivatives
are bounded on this interval. For $\rho\geq1$ and
$1\leq k\leq\lfloor T\rho\rfloor+1$, both $P^k$ and $P^{k+1}$
belong to this path.
By the path definition~\eqref{eq:ot-entropy-path} and
Proposition~\ref{prop:ot-projection-properties}(i), $P^k$ is a positive
diagonal scaling with
$P^k = {\rm Diag}(u^k)(X^0\odot e^{-(k-1)C/\rho}) {\rm Diag}(v^k)$ for some positive vectors $u^k$ and $v^k$. The same
statement holds at $P^{k+1}$.
For $k=1$, $Y^1=X^0=P^1$. For $k\geq2$, apply the first
identity in~\eqref{eq:ot-shifted-projections} with index $k-1$.
From the representations
of $Y^k$
and $X^k$ in~\eqref{eq:ot-shifted-projections}, and substituting
the definitions of $r_Y^k,c_X^k$ in~\eqref{eq:ot-marginal-residuals},
we get
$Y^k= \mathcal P (X^0\odot e^{-(k-1)C/\rho}; a \odot (e_m+r^k_Y),b)$,
and $X^k = \mathcal P (X^0\odot e^{-kC/\rho}; a, b\odot (e_n+c^k_X)).$
By applying \eqref{eq:ot-projection-invariance} to $Y^k$ and $X^k$, we
get
 the claimed projection representations.
Since $P^k,P^{k+1}\in\Pi$, their projections onto $\Pi$ are unchanged,
i.e., $P^k = \mathcal P (P^k; a,b)$ and $P^{k+1}=\mathcal P(P^{k+1}; a,b)$.

Choose $0<\delta_0\leq1/2$. For a pair $(r,c)$ satisfying
$a^\top r=b^\top c=0$ and $\|(r,c)\|\leq\delta_0$, the perturbed
marginals $a\odot(e_m+r)$ and $b\odot(e_n+c)$ are positive and
have the same respective total sums as $a$ and $b$.
The triples $(P(t),a\odot(e_m+r),b)$ and
$(P(t),a,b\odot(e_n+c))$ thus range over a compact subset of the domain in
Proposition~\ref{prop:ot-projection-properties}(iv).
Let $L_{\mathrm{proj}}>0$ be its Lipschitz constant on this compact
set, and set $L_1\coleq L_{\mathrm{proj}}\max\{\|a\|_\infty,\|b\|_\infty\}$.
Therefore, whenever $\|(r_Y^k,c_X^k)\|\leq\delta_0$,
\begin{equation*}
\begin{gathered}
\begin{aligned}
\|Y^k-P^k\|_F
&=\|\mathcal P(P^k;a\odot(e_m+r_Y^k),b)-\mathcal P(P^k;a,b)\|_F\\
&\leq L_{\mathrm{proj}}\|a\odot r_Y^k\|
 \leq L_{\mathrm{proj}}\|a\|_\infty\|r_Y^k\|
 \leq L_1\|r_Y^k\|,
\end{aligned}\\[3pt]
\begin{aligned}
\|X^k-P^{k+1}\|_F
&=\|\mathcal P(P^{k+1};a,b\odot(e_n+c_X^k))-\mathcal P(P^{k+1};a,b)\|_F\\
&\leq L_{\mathrm{proj}}\|b\odot c_X^k\|
 \leq L_{\mathrm{proj}}\|b\|_\infty\|c_X^k\|
 \leq L_1\|c_X^k\|.
\end{aligned}
\end{gathered}
\end{equation*}
This completes the proof.
\end{proof}

The next lemma writes $(r_Y^{k+1},c_X^{k+1})$ as
$B_{P^k}(r_Y^k,c_X^k)+e^k$ and bounds $e^k$ in terms of the current
residuals and $\rho^{-1}$.
\begin{lemma}[One-step estimate for marginal residuals]\label{lem:ot-residual-one-step}
Fix $0\leq T<\infty$. There exist $\delta_1\in(0,1/2]$,
$\rho_1\geq1$ and $L_4>0$ such that, for every fixed
$\rho\geq\rho_1$ and $1\leq k\leq\lfloor T\rho\rfloor$,
$\|(r_Y^k,c_X^k)\|\leq\delta_1$ implies
\begin{equation}\label{eq:ot-residual-remainder}
\begin{aligned}
(r_Y^{k+1},c_X^{k+1})&=B_{P^k}(r_Y^k,c_X^k)+e^k,\\
\|e^k\|&\leq L_4\big(\|(r_Y^k,c_X^k)\|^2
 +\rho^{-1}\|(r_Y^k,c_X^k)\|+\rho^{-2}\big)
\end{aligned}
\end{equation}
Here $B_{P^k}$ is defined in~\eqref{eq:ot-residual-linear}.
\end{lemma}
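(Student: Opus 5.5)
The plan is to expand the two-primal recurrence~\eqref{eq:ot-two-primal-recurrence} to first order around the entropy path, with all constants taken uniform over $t\in[0,T+2]$. We start by taking $\delta_1\leq\delta_0$ from Lemma~\ref{lem:ot-path-reconstruction}, possibly smaller, and $\rho_1\geq1$. The lemma gives $Y^k=\mathcal P(P^k;a\odot(e_m+r_Y^k),b)$ and $X^k=\mathcal P(P^{k+1};a,b\odot(e_n+c_X^k))$. Proposition~\ref{prop:ot-projection-properties}(i),(iv) then lets us write both iterates as diagonal scalings of path points:
\begin{equation*}
Y^k=\Diag(e^{u})P^k\Diag(e^{v}),\qquad X^k=\Diag(e^{u'})P^{k+1}\Diag(e^{v'}).
\end{equation*}
Here $(u,v)$ and $(u',v')$ depend smoothly on the marginal perturbations and vanish at zero. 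Because the path is compact and positive, Taylor's theorem with a uniform $C^2$ bound gives
\begin{equation*}
(u,v)=\ell_{P^k}(r_Y^k,0)+O(\|r_Y^k\|^2),\qquad
(u',v')=\ell_{P^{k+1}}(0,c_X^k)+O(\|c_X^k\|^2),
\end{equation*}
up to the per-component constant ambiguity. The linear map $\ell_P(r,c)=(\bar u,\bar v)$ is defined by the linearized marginal equations
\begin{equation*}
\bar u+M_P\bar v=r,\qquad N_P\bar u+\bar v=c.
\end{equation*}
These equations are solvable on $\mathcal L_P$, and the ambiguity $(\gamma,-\gamma)$ on components cancels in every product below.

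Next, we substitute into~\eqref{eq:ot-two-primal-recurrence}. For $Y^{k+1}=\mathcal C_b(X^k)$, the row sums are $\sum_jX^k_{ij}(1+c_j)^{-1}$, so
\begin{equation*}
r_Y^{k+1}=-M_{X^k}c_X^k+O(\|c_X^k\|^2).
\end{equation*}
Lemma~\ref{lem:ot-path-reconstruction} gives $X^k-P^{k+1}=O(\|c_X^k\|)$, and the bounded path derivative gives $P^{k+1}-P^k=O(\rho^{-1})$. Hence $M_{X^k}-M_{P^k}=O(\|c\|+\rho^{-1})$, which yields the first component of $B_{P^k}$ with a remainder of the stated form.

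For $X^{k+1}$, we first use the path structure. Since $\log P(t)=\log X^0-tC+\alpha(t)e_n^\top+e_m\beta(t)^\top$ with smooth $\alpha,\beta$, we have
\begin{equation*}
2\log P^{k+1}-\log P^k=\log P^{k+2}+\text{(row and column terms of size }O(\rho^{-2})).
\end{equation*}
Therefore the matrix to be row-normalized has the form
\begin{equation*}
\Diag(e^{2u'-u+O(\rho^{-2})})\,P^{k+2}\,\Diag\big(e^{2v'-v+O(\rho^{-2})}(e_n+c_X^k)^{-(1+s)}\big).
\end{equation*}
The row factor is removed by $\mathcal R_a$. Writing $g\coloneqq 2v'-v-(1+s)c_X^k$, the row normalization contributes approximately $-M_{P}g$ to the row exponent. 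The new relative column residual is then $(I-N_PM_P)g$ to first order, with remainders
\begin{equation*}
O\big(\|(r,c)\|^2+\rho^{-1}\|(r,c)\|+\rho^{-2}\big).
\end{equation*}
The $\rho^{-1}\|(r,c)\|$ term comes from replacing $P^{k+1},P^{k+2}$ by $P^k$ in the linear coefficients. The $\rho^{-2}$ term is the only contribution independent of the residuals; the first-order $\rho^{-1}$ effects are absorbed exactly by the path shift $P^{k+1}\to P^{k+2}$, which is why the bound contains no bare $\rho^{-1}$.

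It remains to check the linear algebra identity $(I-N_PM_P)g=N_Pr+[(1-s)I+(1+s)N_PM_P]c$, using $(u,v)=\ell_P(r,0)$ and $(u',v')=\ell_P(0,c)$. From $N_Pu+v=0$ and $u'+M_Pv'=0$ we get $v=-N_Pu$ and $(I-N_PM_P)v'=c$. Together with $u-M_PN_Pu=r$, these give $(I-N_PM_P)v=-N_Pr$. Substituting,
\begin{equation*}
(I-N_PM_P)g=2c+N_Pr-(1+s)(I-N_PM_P)c,
\end{equation*}
which is exactly the second component of $B_P$. We expect the main obstacle to be not this algebra but making every remainder uniform in $k\leq\lfloor T\rho\rfloor$ and $\rho\geq\rho_1$. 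This requires a uniform $C^2$ bound for $\mathcal P$ on the compact set from Lemma~\ref{lem:ot-path-reconstruction}, and a second-difference bound on $\alpha,\beta$ along $[0,T+2]$. We handle it by choosing $\delta_1$ small enough that all perturbed marginals stay in one compact set, and $\rho_1$ large enough that $P^{k+2}$ stays on the path segment $[0,T+2]$.
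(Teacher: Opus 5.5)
Your proposal is correct, but it takes a different route from the paper.

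\textbf{The paper's route.} It never introduces scaling potentials. It defines a smooth map $F(A,B;d)$ on the convex hull of the path and computes $D_dF(P,P;0)=B_P$. That computation uses only the marginals of the projection derivatives $G,H$ and the derivative $D\mapsto D-\Diag((De_n)\oslash a)P$ of $\mathcal R_a$. The paper bounds $F(P^k,P^{k+1};0)$ by $O(\rho^{-2})$ through the identity $B^{\odot2}\oslash A=2B-A+(B-A)^{\odot2}\oslash A$, where $2B-A$ satisfies both marginal constraints. The $\rho^{-1}\|d\|$ term comes from a mixed-derivative bound $\|D_dF(A,B;0)-D_dF(A,A;0)\|\leq L_F\|B-A\|_F$.

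\textbf{Your route.} You write $Y^k,X^k$ through dual potentials and reduce the linearization to the block system $\bar u+M_P\bar v=r$, $N_P\bar u+\bar v=c$. Its kernel $(\gamma,-\gamma)$ is annihilated by $I-N_PM_P$, since $N_PM_Pe_n=e_n$. You obtain the $\rho^{-2}$ term because the $-tC$ parts cancel exactly in $2\log P^{k+1}-\log P^k$. Thus $(P^{k+1})^{\odot2}\oslash P^k$ is a diagonal scaling of $P^{k+2}$ with factors $e^{O(\rho^{-2})}$, coming from second differences of the smooth path potentials.

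\textbf{What each buys.} Your route shows clearly why no bare $\rho^{-1}$ term appears: at zero residuals, one step of the recurrence advances the kernel $X^0\odot e^{-tC}$ by exactly one step. It also does not need $\rho_1\geq2L_2/\gamma$ to keep $2P^{k+1}-P^k$ positive. The cost is gauge bookkeeping. You must fix a normalization (e.g.\ $v_1=0$) to get uniform $C^2$ bounds on the potentials and uniform Lipschitz dependence of $\ell_P$ on $P$ modulo gauge. That is legitimate here, because every path point is positive with a single connected component.

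The paper's matrix-level computation avoids potentials altogether. Its expansion template, including the derivative of $\mathcal R_a$, is reused in Lemma~\ref{lem:ot-supported-estimates} around $P_*$. There $P_*$ may vanish off $S$ and its support graph may have several components, so a single-gauge potential parametrization would need modification.
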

\begin{proof}
\indent\textup{(i)} We first define $F$ so that
$F(P^k,P^{k+1};(r_Y^k,c_X^k))=(r_Y^{k+1},c_X^{k+1})$.
Choose $0<\delta_1\leq\delta_0$, where $\delta_0$ is given by
Lemma~\ref{lem:ot-path-reconstruction}.
In this proof, $A,B$ belong to the convex hull $\mathcal K_T$ of
$\{P(t):0\leq t\leq T+1\}$, and $d\coleq(r,c)$ satisfies
$a^\top r=b^\top c=0$ and $\|d\|\leq\delta_1$.
For each $(A,B,d)$, we introduce auxiliary matrices
$\widehat Y,\widehat X$ and their outputs
$\widehat Y^+,\widehat X^+$ under one application
of~\eqref{eq:ot-two-primal-recurrence}.
These matrices depend on $(A,B,d)$ and do not form an additional
iteration sequence. Specifically, set
\begin{equation*}
\widehat Y\coleq\mathcal P(A;a\odot(e_m+r),b),\qquad
\widehat X\coleq\mathcal P(B;a,b\odot(e_n+c)).
\end{equation*}
Since $\widehat X^\top e_m=b\odot(e_n+c)$, the factor
$b\oslash(\widehat X^\top e_m)$ in~\eqref{eq:ot-two-primal-recurrence}
is $(e_n+c)^{-1}$. Accordingly, define
\begin{equation*}
\widehat Y^+\coleq\mathcal C_b(\widehat X),\qquad
\widehat X^+\coleq\mathcal R_a\!\left[
 (\widehat X^{\odot2}\oslash\widehat Y)
 \Diag((e_n+c)^{-(1+s)})\right]
\end{equation*}
and
\begin{equation*}
F(A,B;d)\coleq
\big((\widehat Y^+e_n)\oslash a-e_m,
 ((\widehat X^+)^\top e_m)\oslash b-e_n\big)
 \in\R^m\times\R^n.
\end{equation*}
For the actual iterates, $a^\top r_Y^k=b^\top c_X^k=0$.
Lemma~\ref{lem:ot-path-reconstruction} identifies $Y^k,X^k$ with
$\widehat Y,\widehat X$ when $A=P^k$, $B=P^{k+1}$ and
$d=(r_Y^k,c_X^k)$. Since
$b\oslash((X^k)^\top e_m)=(e_n+c_X^k)^{-1}$,
equation~\eqref{eq:ot-two-primal-recurrence} then identifies
$\widehat Y^+,\widehat X^+$ with $Y^{k+1},X^{k+1}$, respectively.
By the residual definitions~\eqref{eq:ot-marginal-residuals}, we obtain
\begin{equation*}
(r_Y^{k+1},c_X^{k+1})=F(P^k,P^{k+1};(r_Y^k,c_X^k)).
\end{equation*}

To differentiate $F$ and obtain Taylor bounds uniform in $\rho$ and $k$,
we first verify the domain conditions. The matrices $A,B$ are positive and
feasible, and the perturbed marginals $a\odot (e_m+r)$ and $b\odot(e_n+c)$ are positive because
$\delta_1\leq1/2$. The conditions $a^\top r=b^\top c=0$ ensure
compatible total sums in each projection.
The projection arguments range over a compact subset of the domain
in Proposition~\ref{prop:ot-projection-properties}(iv).
The resulting $\widehat X,\widehat Y$ are positive and depend
smoothly on $(A,B,d)$. Compactness bounds their entries away from
zero, as well as every denominator in the two normalizations.
Thus $F$ is smooth on a relative neighborhood of this domain,
and its derivatives through order two are bounded independently
of $\rho$ and $k$. Derivatives in $d$ are taken on the subspace
$a^\top r=b^\top c=0$.

\textup{(ii)} We next prove $F(P,P;0)=0$ and $D_dF(P,P;0)=B_P$
for every $P$ in the convex hull $\mathcal K_T$. At $d=0$, the two projections
$\widehat Y,\widehat X$ equal $P$. Since $P\in\Pi$, both
normalizations leave $P$ unchanged, so $F(P,P;0)=0$.

For a direction $d=(r,c)$ in the stated subspace, define
\begin{equation*}
G\coleq\left.\frac{d}{d\tau}\mathcal P(P;a\odot(e_m+\tau r),b)\right|_{\tau=0},\qquad
H\coleq\left.\frac{d}{d\tau}\mathcal P(P;a,b\odot(e_n+\tau c))\right|_{\tau=0}.
\end{equation*}
The marginal constraints of these projections give
\begin{equation*}
Ge_n=a\odot r,\quad G^\top e_m=0,\qquad
He_n=0,\quad H^\top e_m=b\odot c.
\end{equation*}
Using $M_P,N_P$ defined before Lemma~\ref{lem:ot-linear-contraction}, we
calculate the two marginal derivatives defining $D_dF(P,P;0)d$.
Since $(P+\tau H)^\top e_m=b\odot(e_n+\tau c)$,
\begin{equation*}
\left.\frac{d}{d\tau}\mathcal C_b(P+\tau H)\right|_{\tau=0}
=\left.\frac{d}{d\tau}\big[(P+\tau H)\Diag((e_n+\tau c)^{-1})\big]\right|_{\tau=0}
=H-P\Diag(c).
\end{equation*}
Taking its row sums and dividing by $a$ yields
\begin{equation*}
\Diag(a)^{-1}(H-P\Diag(c))e_n
=\Diag(a)^{-1}(0-Pc)=-M_Pc.
\end{equation*}

For the row update, entrywise differentiation gives
\begin{equation*}
\left.\frac{d}{d\tau}\big[(P+\tau H)^{\odot2}\oslash(P+\tau G)\big]\right|_{\tau=0}=2H-G,\qquad\left.\frac{d}{d\tau}(e_n+\tau c)^{-(1+s)}\right|_{\tau=0}=-(1+s)c.
\end{equation*}
By the product rule, the variation before normalization is
$$
\left.\frac{d}{d\tau}\big[\big((P+\tau H)^{\odot2}\oslash(P+\tau G)\big)
{\rm Diag}\big((e_n+\tau c)^{-(1+s)}\big)\big]\right|_{\tau=0}=2H-G -(1+s)P\Diag(c).
$$
 Its row sums are
\begin{equation*}
\big(2H-G-(1+s)P\Diag(c)\big)e_n
=-a\odot r-(1+s)Pc.
\end{equation*}
For any matrix direction $D$, the identity $Pe_n=a$ implies
\begin{equation*}
\left.\frac{d}{d\tau}\mathcal R_a(P+\tau D)\right|_{\tau=0}
=\left.\frac{d}{d\tau}\big[\Diag(a\oslash(a+\tau De_n))(P+\tau D)\big]\right|_{\tau=0}
=D-\Diag((De_n)\oslash a)P.
\end{equation*}
Substituting the variation before normalization for $D=2H-G-(1+s)P\Diag(c)$ gives the
variation of $\widehat X^+$:
\begin{equation*}
2H-G-(1+s)P\Diag(c)+\Diag(r+(1+s)M_Pc)P.
\end{equation*}
Its column sums, divided by $b$, are
\begin{equation*}
\begin{aligned}
&\Diag(b)^{-1}\big[2H^\top e_m-G^\top e_m
 -(1+s)\Diag(c)P^\top e_m+P^\top(r+(1+s)M_Pc)\big]\\[3pt]
&=2c-(1+s)c+N_P(r+(1+s)M_Pc)
\;=\;N_Pr+[(1-s)I+(1+s)N_PM_P]c.
\end{aligned}
\end{equation*}
Combining the two marginal derivatives with~\eqref{eq:ot-residual-linear}
gives
\begin{equation*}
D_dF(P,P;0)d=B_P(r,c).
\end{equation*}

\textup{(iii)} We next bound $\|F(P^k,P^{k+1};0)\|$ by a constant times
$\|P^{k+1}-P^k\|_F^2$ to obtain the $\rho^{-2}$ term
in~\eqref{eq:ot-residual-remainder}.
Since $A,B\in\Pi$, at $d=0$ we have
$\widehat Y=A$, $\widehat X=B$ and $\widehat Y^+=B$. Consequently,
\begin{equation*}
F(A,B;0)=\big(0,
 [\mathcal R_a(B^{\odot2}\oslash A)^\top e_m-b]\oslash b\big).
\end{equation*}
The identity
\begin{equation*}
B^{\odot2}\oslash A=2B-A+(B-A)^{\odot2}\oslash A
\end{equation*}
separates the quadratic term from a matrix satisfying both marginal
constraints:
\begin{equation*}
(2B-A)e_n=2a-a=a,\qquad (2B-A)^\top e_m=2b-b=b.
\end{equation*}
For $A=P^k$ and $B=P^{k+1}$, let
$\gamma\coleq\min_{t\in[0,T+1],i,j}P_{ij}(t)>0$.
The bounded path derivative gives $L_2>0$ such that
$\|P^{k+1}-P^k\|_F\leq L_2/\rho$.
If $\rho\geq2L_2/\gamma$, every entry of $2P^{k+1}-P^k$ is at
least $\gamma/2$. Its row constraints therefore give
$\mathcal R_a(2P^{k+1}-P^k)=2P^{k+1}-P^k$.
The matrices $(P^{k+1})^{\odot2}\oslash P^k$ and $2P^{k+1}-P^k$,
and the segments joining them, lie in a fixed compact positive set.
The derivative of $\mathcal R_a$ is bounded on that set.
Using also $(2P^{k+1}-P^k)^\top e_m=b$, we obtain $L_3>0$ such that
\begin{eqnarray*}
&& \|F(P^k,P^{k+1};0)\|
=\big\|[\mathcal R_a((P^{k+1})^{\odot2}\oslash P^k)^\top e_m-b]\oslash b\big\|\\
&&\leq\frac{\sqrt m}{\min_jb_j}
 \big\|\mathcal R_a((P^{k+1})^{\odot2}\oslash P^k)-(2P^{k+1}-P^k)\big\|_F\;\leq\; L_3\|(P^{k+1}-P^k)^{\odot2}\oslash P^k\|_F\\
&&\leq\frac{L_3}{\gamma}\|P^{k+1}-P^k\|_F^2
 \leq\frac{L_3L_2^2}{\gamma\rho^2}.
\end{eqnarray*}
Thus feasibility of $2P^{k+1}-P^k$ removes the first-order change
from both residuals.

\textup{(iv)} We finally estimate
$F(P^k,P^{k+1};d)-B_{P^k}d$ by the three terms
$\rho^{-2}$, $\rho^{-1}\|d\|$ and $\|d\|^2$
in~\eqref{eq:ot-residual-remainder}.
Fix $A,B$ and expand $F(A,B;d)$ at $d=0$.
Since the segment from $0$ to $d$ stays in the chosen residual
neighborhood, the bounded second derivatives give a constant
$L_F>0$ and a remainder $R(A,B;d)$ satisfying
\begin{equation*}
F(A,B;d)=F(A,B;0)+D_dF(A,B;0)d+R(A,B;d),\qquad\|R(A,B;d)\|\leq L_F\|d\|^2.
\end{equation*}
By part~\textup{(ii)}, $D_dF(A,A;0)d=B_A d$.
Subtracting $B_A d$ from the Taylor expansion therefore gives
\begin{equation} \label{eq-Taylor}
F(A,B;d)-B_A d
=F(A,B;0)+[D_dF(A,B;0)-D_dF(A,A;0)]d+R(A,B;d).
\end{equation}
The derivative bounds established in part~\textup{(i)} make $L_F$
independent of $\rho$ and $k$.
The segment from $A$ to $B$ remains in the positive convex hull $\mathcal{K}_T$.
Integrating the mixed derivative with respect to the second matrix
and $d$ along this segment, and increasing the same constant $L_F$
if necessary, gives
\begin{equation*}
\|D_dF(A,B;0)-D_dF(A,A;0)\|\leq L_F\|B-A\|_F,
\end{equation*}
where the left-hand-side norm is the Euclidean operator norm on the
residual subspace. Substitute $A=P^k$, $B=P^{k+1}$ and
$d=(r_Y^k,c_X^k)$ in the Taylor expansion \eqref{eq-Taylor}, and 
using Part~\textup{(i)},  we get 
\begin{equation*}
\begin{aligned}
\|(r_Y^{k+1},c_X^{k+1})-B_{P^k}(r_Y^k,c_X^k)\|
&\leq\|F(P^k,P^{k+1};0)\| + L_F\|P^{k+1}-P^k\|_F\|d\|
+ L_F\|d\|^2
\\
&\leq
L_F\|(r_Y^k,c_X^k)\|^2
 +\frac{L_FL_2}{\rho}\|(r_Y^k,c_X^k)\|
 +\frac{L_3L_2^2}{\gamma\rho^2}.
\end{aligned}
\end{equation*}
In the last inequality, we 
 used the bound from part~\textup{(iii)} for
$F(P^k,P^{k+1};0)$ and $\|P^{k+1}-P^k\|_F\leq L_2/\rho$.

Define $e^k\coleq(r_Y^{k+1},c_X^{k+1})-B_{P^k}(r_Y^k,c_X^k)$.
Choose
\begin{equation*}
\rho_1\geq\max\{1,2L_2/\gamma\},\qquad
L_4\geq\max\{L_F,L_FL_2,L_3L_2^2/\gamma\}.
\end{equation*}
All constants depend only on $T,C,a,b,X^0,s$, so the estimates hold
uniformly for $\rho\geq\rho_1$ and
$1\leq k\leq\lfloor T\rho\rfloor$ whenever the current residual
norm is at most $\delta_1$. This proves~\eqref{eq:ot-residual-remainder}.
\end{proof}

We now combine Lemmas~\ref{lem:ot-path-reconstruction}
and~\ref{lem:ot-residual-one-step} with the uniform contraction in
Lemma~\ref{lem:ot-linear-contraction}(ii) to obtain finite-time
tracking of the actual iterates $X^k$ and $Y^k$ along the entropy path
$P(\cdot)$. 

\begin{lemma}[Marginal residuals and finite-time tracking]
\label{lem:ot-path-one-step}
Fix $0\leq T<\infty$. There exist $\rho_0\geq1$, $A_T,B_T,C_T>0$ and
$q\in(0,1)$ such that, for every fixed $\rho\geq\rho_0$,
\begin{equation}\label{eq:ot-tracking-residual-bound}
\|r_Y^k\|+\|c_X^k\|
\leq A_Tq^{k-1}/\rho+B_T/\rho^2,
\qquad 1\leq k\leq\lfloor T\rho\rfloor+1.
\end{equation}
Moreover,
\begin{equation}\label{eq:ot-finite-time-tracking}
\begin{gathered}
\max_{1\leq k\leq\lfloor T\rho\rfloor+1}
\big(\|Y^k-P((k-1)/\rho)\|_F+\|X^k-P(k/\rho)\|_F\big)
\leq C_T/\rho,\\
\sum_{k=1}^{\lfloor T\rho\rfloor}\|z^{k+1}-z^k\|
\leq C_T/\rho.
\end{gathered}
\end{equation}
\end{lemma}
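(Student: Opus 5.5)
The plan is an induction over $1\leq k\leq\lfloor T\rho\rfloor+1$ that combines the one-step estimate of Lemma~\ref{lem:ot-residual-one-step} with the uniform contraction of Lemma~\ref{lem:ot-linear-contraction}(ii).

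\textbf{Setup.} Apply Lemma~\ref{lem:ot-linear-contraction}(ii) to the compact set $K\coleq\{P(t):0\leq t\leq T+1\}$. Every matrix on the path is positive by Proposition~\ref{prop:ot-entropy-path}, so all of them have full support and a single connected component. The subspace $\mathcal L_P$ is therefore just $\{a^\top r=b^\top c=0\}$, which contains every residual pair $(r_Y^k,c_X^k)$. This gives constants $m_K,M_K$ and $q_K<1$, uniform over $P\in K$.

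\textbf{Initial residual.} Since $Z^0=0$ and $X^0\in\Pi$, we have $Y^1=X^0$, so $r_Y^1=0$. By the second identity in~\eqref{eq:ot-shifted-projections}, $X^1$ is the row normalization of $X^0\odot e^{-C/\rho}$. A first-order expansion in $1/\rho$ then gives $\|c_X^1\|\leq L_0/\rho$.

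\textbf{Induction.} Write $d^k\coleq(r_Y^k,c_X^k)$ and $u_k\coleq\|d^k\|_{P^k}$. The main obstacle is that the norm $\|\cdot\|_{P}$ changes with $k$. Because the norm is a smooth function of $P$ and $\|P^{k+1}-P^k\|_F\leq L_2/\rho$, there is a constant $L_5$ with
\begin{equation*}
\|d\|_{P^{k+1}}\leq(1+L_5/\rho)\|d\|_{P^k}.
\end{equation*}
Combining this with Lemma~\ref{lem:ot-residual-one-step} and norm equivalence gives, as long as $\|d^k\|\leq\delta_1$,
\begin{equation*}
u_{k+1}\leq(1+L_5/\rho)\big(q_Ku_k+M_KL_4(u_k^2/m_K^2+u_k/(m_K\rho)+\rho^{-2})\big).
\end{equation*}
Fix $q\in(q_K,1)$. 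Suppose inductively that $u_k\leq A q^{k-1}/\rho+B/\rho^2$. Then $u_k$ is $O(1/\rho)$, so the quadratic term and the $u_k/\rho$ term are each at most $O(1/\rho)\,u_k$. For $\rho$ large this yields
\begin{equation*}
u_{k+1}\leq q\,u_k+L_6\rho^{-2}.
\end{equation*}
Summing the geometric series gives
\begin{equation*}
u_{k+1}\leq q^k u_1+\frac{L_6}{(1-q)\rho^2},
\end{equation*}
which closes the induction with $A=M_KL_0$ and $B=L_6/(1-q)$.

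\textbf{Choice of $\rho_0$.} Take $\rho_0\geq\rho_1$ large enough that $(A+B)/(m_K\rho_0)\leq\delta_1$. This keeps every residual inside the neighborhood where Lemma~\ref{lem:ot-residual-one-step} applies. Converting back from $u_k$ to the Euclidean norm via $m_K$ proves~\eqref{eq:ot-tracking-residual-bound}. The index $k=\lfloor T\rho\rfloor+1$ is reached by the final step of the induction.

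\textbf{Tracking bound.} Lemma~\ref{lem:ot-path-reconstruction} converts the residual bound into $\|Y^k-P^k\|_F+\|X^k-P^{k+1}\|_F\leq L_1(A+B)/\rho$. This is the first bound in~\eqref{eq:ot-finite-time-tracking}, since $P^k=P((k-1)/\rho)$ and $P^{k+1}=P(k/\rho)$.

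\textbf{Multiplier sum.} By~\eqref{eq:ot-vector-scaling}, $\|z^{k+1}-z^k\|=s\|\log(e_n+c_X^k)\|\leq 2s\|c_X^k\|$, using $\|c_X^k\|\leq1/2$. Summing~\eqref{eq:ot-tracking-residual-bound} over $k\leq T\rho$ gives
\begin{equation*}
\sum_k\|z^{k+1}-z^k\|\leq 2s\Big(\frac{A}{(1-q)\rho}+\frac{T\rho B}{\rho^2}\Big)=O(1/\rho).
\end{equation*}
The $B_T/\rho^2$ term is exactly what keeps this sum of order $1/\rho$ over $T\rho$ steps. The constant $C_T$ is the larger of the two resulting constants.
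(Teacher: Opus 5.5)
Your proposal is correct and follows the paper's proof essentially step for step. The shared ingredients are the uniform contraction and norm equivalence on the compact path $\{P(t):0\le t\le T+1\}$ via Lemma~\ref{lem:ot-linear-contraction}(ii), the drift factor $(1+L_5/\rho)$ between $\|\cdot\|_{P^k}$ and $\|\cdot\|_{P^{k+1}}$, the recurrence $u_{k+1}\le qu_k+L_6/\rho^2$ closed by induction from $u_1=O(1/\rho)$, and then Lemma~\ref{lem:ot-path-reconstruction} together with $\|\log(e_n+c_X^k)\|\le 2\|c_X^k\|$ for the two tracking bounds.

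The only real variation is how you absorb the quadratic remainder. You use the inductive bound $u_k\le (A+B)/\rho$, whereas the paper fixes a small radius $\delta$ and keeps the residuals in the $\delta/2$-ball; both work equally well.

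One citation should be corrected: $X^1=\mathcal R_a(X^0\odot e^{-C/\rho})$ follows directly from Algorithm~\ref{alg:ot} with $Y^1=X^0$ and $Z^1=0$. It does not follow from the shifted-projection identity you cite.
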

\begin{proof}
Take $\delta_0,L_1$ from Lemma~\ref{lem:ot-path-reconstruction}
and $\delta_1,\rho_1,L_4$ from Lemma~\ref{lem:ot-residual-one-step}.
In this proof, write $d^k\coleq(r_Y^k,c_X^k)$ and
$u_k\coleq\|d^k\|_{P^k}$.

\textup{(i)} We first compare the norms at consecutive points on the entropy path.
The path $\{P(t):0\leq t\leq T+1\}$ is compact and positive.
Its support graph has one connected component, so the equations in 
\eqref{eq:ot-component-zero-sums} reduce to $a^\top r=b^\top c=0$.
By Lemma~\ref{lem:ot-linear-contraction}(ii), there exist
$m_T,M_T>0$ and $q_0\in(0,1)$ such that, for every $P$ on the path
and every $d=(r,c)$ with $a^\top r=b^\top c=0$,
\begin{equation*}
m_T\|d\|\leq\|d\|_P\leq M_T\|d\|,\qquad
\|B_Pd\|_P\leq q_0\|d\|_P.
\end{equation*}
By~\eqref{eq:ot-residual-norm}, the squared norm is smooth in $P$
and quadratic in $d$. Its derivative along the compact smooth path
is bounded by a constant times $\|d\|^2$.
For $\rho\geq1$ and $1\leq k\leq\lfloor T\rho\rfloor$, integrate
over the path interval from $(k-1)/\rho$ to $k/\rho$ and use
$m_T\|d\|\leq\|d\|_{P^k}$. This gives $L_5>0$, independent of
$\rho,k,d$, such that
\begin{equation*}
\big|\|d\|_{P^{k+1}}^2-\|d\|_{P^k}^2\big|\leq\frac{2L_5}{\rho}\|d\|_{P^k}^2,\qquad\|d\|_{P^{k+1}}\leq\sqrt{1+2L_5/\rho}\,\|d\|_{P^k}\leq(1+L_5/\rho)\|d\|_{P^k}.
\end{equation*}

\textup{(ii)} We next derive a recurrence for the residuals, assuming that
they are sufficiently small. Fix $q\in(q_0,1)$.
The actual residuals satisfy $a^\top r_Y^k=b^\top c_X^k=0$.
Since $B_{P^k}$ preserves this subspace, and also 
$a^\top r_Y^{k+1}=b^\top c_X^{k+1}=0$, the remainder $e^k$
in~\eqref{eq:ot-residual-remainder} belongs to it as well.

Choose $0<\delta\leq\min\{\delta_0,\delta_1,\frac{(q-q_0)m_T}{2L_4M_T}\}$.
For now, suppose $\rho\geq\rho_1$,
$1\leq k\leq\lfloor T\rho\rfloor$ and
$\|d^k\|\leq\delta$.
Using $\|d^k\|^2\leq\delta\|d^k\|$ and $\|d^k\|\leq u_k/m_T$
in~\eqref{eq:ot-residual-remainder}, we obtain
\begin{equation*}
\|e^k\|\leq L_4\big[(\delta+\rho^{-1})\|d^k\|+\rho^{-2}\big]
\leq\frac{L_4(\delta+\rho^{-1})}{m_T}u_k+\frac{L_4}{\rho^2}.
\end{equation*}
Applying the bounds in part~\textup{(i)} to $B_{P^k}d^k+e^k$ gives
\begin{equation*}
u_{k+1}\leq(1+L_5/\rho)\big(q_0u_k+M_T\|e^k\|\big)\leq(1+L_5/\rho)\left[\left(q_0+\frac{M_TL_4(\delta+\rho^{-1})}{m_T}\right)u_k+\frac{M_TL_4}{\rho^2}\right].
\end{equation*}
Now choose $\rho_0\geq\max\{1,\rho_1\}$ sufficiently large so that
for all $\rho\geq\rho_0$,
\begin{equation*}
(1+L_5/\rho)\left(\frac{q_0+q}{2}
 +\frac{M_TL_4}{m_T\rho}\right)\leq q,\qquad L_5/\rho\leq1.
\end{equation*}
Such a choice is possible because the first left-hand side converges
to $(q_0+q)/2<q$. Setting $L_6\coleq2M_TL_4$ gives
\begin{equation}\label{eq:ot-tracking-contraction}
u_{k+1}\leq q u_k+L_6/\rho^2
\end{equation}
whenever $\rho\geq\rho_0$, $1\leq k\leq\lfloor T\rho\rfloor$
and $\|d^k\|\leq\delta$.

\textup{(iii)} We show by induction that the residuals remain in this
neighborhood throughout the stated iteration range.
The initialization gives $Y^1=X^0$, $Z^1=0$ and
$X^1=\mathcal R_a(X^0\odot e^{-C/\rho})$, with $X^0\in\Pi$.
The map $t\mapsto\mathcal R_a(X^0\odot e^{-tC})$ equals $X^0$ at
$t=0$ and has bounded derivative for $0\leq t\leq1$.
By~\eqref{eq:ot-marginal-residuals}, $r_Y^1=0$ and there is $L_7>0$
such that
\begin{equation*}
\|c_X^1\|
=\big\|\big[(\mathcal R_a(X^0\odot e^{-C/\rho})-X^0)^\top e_m\big]\oslash b\big\|
\leq L_7/\rho.
\end{equation*}
Thus $u_1\leq M_TL_7/\rho$.
With $q$ and $\delta$ already fixed, enlarge $\rho_0$ so that,
for every $\rho\geq\rho_0$,
\begin{equation*}
\frac1{m_T}\left(\frac{M_TL_7}{\rho}
 +\frac{L_6}{(1-q)\rho^2}\right)\leq\frac\delta2.
\end{equation*}
This gives $\|d^1\|\leq u_1/m_T\leq\delta/2$.
Suppose $\|d^j\|\leq\delta/2$ for $1\leq j\leq k$, where
$k\leq\lfloor T\rho\rfloor$. Then~\eqref{eq:ot-tracking-contraction}
is valid for every $j=1,\ldots,k$, so
\begin{equation*}
u_{k+1}\leq q^ku_1+\frac{L_6}{\rho^2}\sum_{j=0}^{k-1}q^j
\leq\frac{M_TL_7q^k}{\rho}+\frac{L_6}{(1-q)\rho^2}.
\end{equation*}
It follows that $\|d^{k+1}\|\leq u_{k+1}/m_T\leq\delta/2$,
which closes the induction. The same bounds, including the initial
index $k=1$, give
\begin{equation} \label{eq-residual-bound}
\|r_Y^k\|+\|c_X^k\|
\leq\sqrt2\|d^k\|\leq\frac{\sqrt2}{m_T}u_k
\leq\frac{\sqrt2M_TL_7q^{k-1}}{m_T\rho}
 +\frac{\sqrt2L_6}{m_T(1-q)\rho^2}.
\end{equation}
Hence~\eqref{eq:ot-tracking-residual-bound} holds with
$A_T\coleq\sqrt2M_TL_7/m_T$ and
$B_T\coleq\sqrt2L_6/(m_T(1-q))$.

\textup{(iv)} We use the residual bound \eqref{eq-residual-bound} to prove the two remaining estimates
in the lemma.
Since part~\textup{(iii)} gives $\|d^k\|\leq\delta/2\leq\delta_0$,
Lemma~\ref{lem:ot-path-reconstruction} applies throughout
$1\leq k\leq\lfloor T\rho\rfloor+1$.
Using~\eqref{eq:ot-tracking-residual-bound} and $\rho\geq1$
in~\eqref{eq:ot-tracking-reconstruction}, we obtain
\begin{equation*}
\|Y^k-P^k\|_F+\|X^k-P^{k+1}\|_F
\leq L_1(\|r_Y^k\|+\|c_X^k\|)
\leq\frac{L_1(A_T+B_T)}{\rho}.
\end{equation*}
For the multiplier increments, Proposition~\ref{prop:ot-equivalent-updates}(ii)
and~\eqref{eq:ot-marginal-residuals} give
\begin{equation}\label{eq:ot-residual-update}
z^{k+1}-z^k=-s\log(e_n+c_X^k).
\end{equation}
Part~\textup{(iii)} also gives $\|c_X^k\|_\infty\leq\delta\leq1/2$.
For $|t|\leq1/2$, the derivative of $\log(1+t)$ is at most $2$,
so $\|\log(e_n+c_X^k)\|\leq2\|c_X^k\|$.
Combining~\eqref{eq:ot-residual-update} with
\eqref{eq:ot-tracking-residual-bound}, we obtain
\begin{equation*}
\sum_{k=1}^{\lfloor T\rho\rfloor}\|z^{k+1}-z^k\|\leq2s\sum_{k=1}^{\lfloor T\rho\rfloor}\|c_X^k\|\leq\frac{2sA_T}{\rho}\sum_{k=1}^{\lfloor T\rho\rfloor}q^{k-1}+\frac{2sB_T\lfloor T\rho\rfloor}{\rho^2}\leq\frac{2s}{\rho}\left(\frac{A_T}{1-q}+TB_T\right).
\end{equation*}
Thus both inequalities in~\eqref{eq:ot-finite-time-tracking} hold with
$C_T\coleq\max\{L_1(A_T+B_T),2s(A_T/(1-q)+TB_T)\}$.
All chosen constants are independent of $\rho$ and $k$.
If $T=0$, part~\textup{(iii)} requires only the initial index $k=1$,
and the sum of multiplier increments is empty, so both conclusions
remain valid.
\end{proof}

We estimate feasibility by the Bregman distance 
$D_H(Y^{k+1};X^k).$
Proposition~\ref{prop:ot-equivalent-updates}(ii)
gives
\begin{equation}\label{eq:ot-feasibility-error}
D_H(Y^{k+1};X^k)=D_H\big(b;(X^k)^\top e_m\big) 
= \sum_jb_j(c_{X,j}^k-\log(1+c_{X,j}^k)),
\end{equation}
where the last equality used the fact that 
$(X^k)^\top e_m = b\odot(e_n + c^k_X)$.

For an {\em optimal} solution $P$ and a dual optimal solution $(\alpha_*,\beta_*)$,
let $\Gamma$ be its dual slack and set $z_*\coleq\beta_*/\rho$.
Define the potential function: 
\begin{equation}\label{eq:ot-radial-energy}
\mathcal E_P(Y,z;z_*)\coleq D_H(P;Y)+\frac{\|z-z_*\|_b^2}{2s},
\qquad Y>0,\quad z\in\R^n.
\end{equation}
Here $Z_*\coleq-\rho e_mz_*^\top$ is a multiplier for the constraint $X-Y=0$ at the
optimal solution. We estimate $D_H(P;Y^k)$ and $\|z^k-z_*\|_b^2/(2s)$
together. Their changes contribute the inner products
$\langle c_X^k,z^k-z_*\rangle_b$ and
$-\langle\log(e_n+c_X^k),z^k-z_*\rangle_b$. Their sum contains
$c_X^k-\log(e_n+c_X^k)$, which is of quadratic order in $c_X^k$ near zero.

\begin{lemma}[Bregman descent]\label{prop:ot-entropy-identity}
Let Algorithm~\ref{alg:ot} generate $\{(X^k,Y^k,Z^k)\}$ with
$Z^0=0$ and $0<s<2$, and write $Z^k=-\rho e_m(z^k)^\top$ as in
Proposition~\ref{prop:ot-equivalent-updates}(ii).
Fix a primal--dual optimal solution $(P,\alpha_*,\beta_*)$.
There exists $h>0$ such that, for every $k\geq1$,
$\mathcal E_P(Y^k,z^k;z_*)\leq h$ implies
\begin{equation}\label{eq:ot-entropy-descent}
\mathcal E_P(Y^{k+1},z^{k+1};z_*)
\leq\mathcal E_P(Y^k,z^k;z_*)-\rho^{-1}\langle\Gamma,X^k\rangle
-\tfrac12D_H(Y^{k+1};Y^k)-\tfrac{2-s}{2}D_H(Y^{k+1};X^k).
\end{equation}
The threshold $h$ can be chosen independently of $\rho>0$ and does not
require a positive lower bound on the nonnegative dual slack matrix $\Gamma$.
\end{lemma}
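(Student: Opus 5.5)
The plan is to derive an exact one-step identity for $\mathcal E_P$ from the closed-form updates, and then absorb the non-sign-definite terms using smallness of the potential. Write $c\coleq c_X^k$, $r\coleq r^k$ and $z\coleq z^k$. I would rely on three facts: $X^k$ has row sums $a$, both $Y^k$ and $Y^{k+1}$ have total mass one, and $Y^{k+1}=\mathcal C_b(X^k)$. The last fact gives
\[
\log Y^{k+1}=\log X^k-e_m\log(e_n+c)^\top .
\]

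\textbf{Step 1 (entropy part).} Since the masses agree, $D_H(P;Y^{k+1})-D_H(P;Y^k)=\langle P,\log Y^k-\log Y^{k+1}\rangle$. I would split this as
\[
\langle P-X^k,\log Y^k-\log X^k\rangle+\langle X^k,\log Y^k-\log X^k\rangle+b^\top\log(e_n+c).
\]
In the first term, substitute the row optimality condition in~\eqref{eq:sub-kkt} with $Z^k=-\rho e_mz^\top$. This gives $\rho(\log Y^k-\log X^k)=C-\rho e_mz^\top+re_n^\top$. The $r$ part cancels because $P$ and $X^k$ share row sums $a$.

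Next use $C=\Gamma+\alpha_*e_n^\top+e_m\beta_*^\top$ together with $\langle\Gamma,P\rangle=0$. The first term then becomes
\[
-\rho^{-1}\langle\Gamma,X^k\rangle+\langle c,z-z_*\rangle_b .
\]
The second term equals $-D_H(X^k;Y^k)$ because the masses agree. Since $b^\top c=0$, the third term equals $-\sum_jb_j(c_j-\log(1+c_j))=-D_H(Y^{k+1};X^k)$ by~\eqref{eq:ot-feasibility-error}.

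\textbf{Step 2 (multiplier part).} By~\eqref{eq:ot-residual-update}, $z^{k+1}-z_*=z-z_*-s\log(e_n+c)$. Hence
\[
\tfrac1{2s}\big(\|z^{k+1}-z_*\|_b^2-\|z-z_*\|_b^2\big)
=-\langle\log(e_n+c),z-z_*\rangle_b+\tfrac s2\|\log(e_n+c)\|_b^2 .
\]
Adding the two steps gives the exact change
\[
\mathcal E_P^{k+1}-\mathcal E_P^k=-\rho^{-1}\langle\Gamma,X^k\rangle-D_H(X^k;Y^k)-D_H(Y^{k+1};X^k)+\tfrac s2\|\log(e_n+c)\|_b^2+\langle c-\log(e_n+c),z-z_*\rangle_b .
\]
The $\Gamma$ term appears with its exact coefficient, so no lower bound on $\Gamma$ is needed. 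The threshold $h$ will enter only through $z-z_*$ and $c$, neither of which involves $\rho$, so $h$ should be independent of $\rho$.

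\textbf{Step 3 (absorption).} It remains to show
\[
D_H(X^k;Y^k)+\tfrac s2 D_H(Y^{k+1};X^k)-\tfrac12D_H(Y^{k+1};Y^k)
\geq\tfrac s2\|\log(e_n+c)\|_b^2-\tfrac{s}{2}D_H(Y^{k+1};X^k)+\langle c-\log(e_n+c),z-z_*\rangle_b .
\]
For $Y^{k+1}$ versus $Y^k$ I would use a three-point comparison via $X^k$. Because $\log Y^{k+1}-\log X^k$ is constant along columns, $D_H(Y^{k+1};Y^k)$ splits as $D_H(Y^{k+1};X^k)+D_H(X^k;Y^k)$ plus a column-weighted cross term. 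That cross term is controlled by the previous two through a weighted Cauchy--Schwarz, which is the source of the coefficient $\tfrac12$.

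For the residual terms, $\phi(t)\coleq t-\log(1+t)$ satisfies $\phi(t)=\tfrac{t^2}2+O(t^3)$. Thus $\|\log(e_n+c)\|_b^2\le 2(1+\eta)D_H(Y^{k+1};X^k)$ for small $\|c\|$, where $\eta\to0$ as $\|c\|\to0$. This leaves a margin of order $(2-s)D_H(Y^{k+1};X^k)$ that the $s<2$ assumption provides.

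The cross term satisfies $|\langle\phi(c),z-z_*\rangle_b|\le\|\phi(c)\|_b\sqrt{2sh}\lesssim\sqrt h\,D_H(Y^{k+1};X^k)$. It is absorbed once $h$ is small.

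\textbf{Main obstacle.} The hardest step is guaranteeing that $c$ is uniformly small, say $\|c\|_\infty\le\tfrac12$, using only $\mathcal E_P\le h$ and no $\rho$-dependence. My first attempt would bound $c$ through $Y^k$ and $z^k$. Note that $X^k=\mathcal R_a(Y^k\odot e^{-C/\rho+e_mz^\top})$. Small $\|z-z_*\|_b$ plus small $D_H(P;Y^k)$ should pin the column sums of $X^k$ near $b$. If this fails near boundary entries, where $e^{-C/\rho}$ adds nothing, I would instead treat large $|c_j|$ separately. For large $|c_j|$, $\phi(c_j)$ grows only linearly while $\log$ is sublinear, so the inequality still holds with constants depending only on $s$.
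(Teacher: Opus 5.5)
Your Steps 1 and 2 are correct and reproduce the paper's exact identity \eqref{eq-potential-difference}. You expand $\langle P,\log Y^k-\log Y^{k+1}\rangle$ directly, whereas the paper uses two three-point identities; either route works.

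The gap is in Step 3. First, a bookkeeping slip: the required inequality is
\[
D_H(X^k;Y^k)+\tfrac s2D_H(Y^{k+1};X^k)-\tfrac12D_H(Y^{k+1};Y^k)\geq\tfrac s2\|\log(e_n+c)\|_b^2+\langle\phi(c),z-z_*\rangle_b,
\]
with no $-\tfrac s2D_H(Y^{k+1};X^k)$ on the right. As you wrote it, the conclusion would carry the coefficient $1-s$ instead of $\tfrac{2-s}2$.

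More seriously, the Cauchy--Schwarz treatment of $D_H(Y^{k+1};Y^k)$ cannot work. The three-point cross term $\langle\log X^k-\log Y^k,Y^{k+1}-X^k\rangle$ is negative and of the same order as the quantities being balanced, while Cauchy--Schwarz only controls its absolute value. Concretely, take $C=\alpha e_n^\top+e_m\beta^\top$ and $X^0=ab^\top$, so that $\Gamma=0$.
\begin{itemize}
\item By induction, $Y^k=ab^\top$ for all $k$ and $X^k$ is a column scaling of it. Hence $Y^{k+1}=Y^k$ and $D_H(Y^{k+1};Y^k)=0$.
\item Since $\log X^k-\log Y^k=e_m\log(e_n+c)^\top$, the cross term equals exactly $-\langle c,\log(e_n+c)\rangle_b=-[D_H(X^k;Y^k)+D_H(Y^{k+1};X^k)]\approx-\|c\|_b^2$.
\item Any Cauchy--Schwarz bound, or an $\epsilon$-weighted split of it, gives an upper bound of at least about $\|c\|_b^2$ for $\tfrac12D_H(Y^{k+1};Y^k)$, a term that is actually zero.
\item The true slack in the required inequality is only about $\tfrac{2-s}4\|c\|_b^2$, so your chain fails for every $s\in(0,2)$.
\end{itemize}
Relatedly, $\|\log(e_n+c)\|_b^2\le2(1+\eta)D_H(Y^{k+1};X^k)$ does not by itself leave a margin. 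Indeed $\tfrac s2\|\log(e_n+c)\|_b^2\approx sD_H(Y^{k+1};X^k)$, and the $\tfrac s2D_H(Y^{k+1};X^k)$ on the left covers only half of it. The other half must come from the marginal part of $D_H(X^k;Y^k)$, which Cauchy--Schwarz has already spent.

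What closes the argument is a pair of exact identities, as in the paper. Since $X^k_{\cdot j}=(1+c_j)Y^{k+1}_{\cdot j}$ and $Y^{k+1}_{\cdot j},Y^k_{\cdot j}$ both have mass $b_j$,
\[
D_H(X^k;Y^k)=\sum_j(1+c_j)D_H(Y^{k+1}_{\cdot j};Y^k_{\cdot j})+D_H((X^k)^\top e_m;b),\qquad D_H((X^k)^\top e_m;b)+D_H(Y^{k+1};X^k)=\langle c,\log(e_n+c)\rangle_b.
\]
With $c_j\ge-\tfrac12$, the first identity gives $D_H(X^k;Y^k)-\tfrac12D_H(Y^{k+1};Y^k)\ge D_H((X^k)^\top e_m;b)$; this, not Cauchy--Schwarz, is the source of the $\tfrac12$. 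The second identity then reduces the requirement to
\[
\big(1-\tfrac s2\big)\big(\|\log(e_n+c)\|_b^2-D_H(Y^{k+1};X^k)\big)\geq\langle\phi(c),z-z_*-\log(e_n+c)\rangle_b .
\]
The left side is about $(1-\tfrac s2)D_H(Y^{k+1};X^k)$, and the right side is at most $(\|z-z_*\|_\infty+\|\log(e_n+c)\|_\infty)D_H(Y^{k+1};X^k)$.

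This step needs $\|c\|_\infty$ and $\|z-z_*\|_\infty$ small uniformly in $\rho$, which you leave open. Your fallback of treating large $|c_j|$ separately is unsupported, since the $\tfrac12$ genuinely requires $1+c_j\ge\tfrac12$. The missing idea is the following.
\begin{itemize}
\item Write $X^k=\mathcal R_a(W)$ with $W\coleq Y^k\odot e^{-\Gamma/\rho+e_m(z-z_*)^\top}$; the $\alpha_*$ row factor cancels under the normalization.
\item Since $P\in\cX$, the Pythagorean identity gives $D_H(P;X^k)\le D_H(P;W)$.
\item Expand $D_H(P;W)$. Using $\langle\Gamma,P\rangle=0$ and $e^{-\Gamma/\rho}\le1$ (because $\Gamma\ge0$), you get $D_H(P;X^k)\le D_H(P;Y^k)+\sum_jb_j\big(e^{z_j-z_{*,j}}-1-z_j+z_{*,j}\big)$. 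This bound is $\rho$-free and small whenever $\mathcal E_P$ is.
\item Pinsker then gives $\|c\|_\infty\le\|X^k-P\|_1/\min_jb_j$.
\end{itemize}
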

\begin{proof}
Fix $k\geq1$.

\textup{(i)} We first derive an exact expression for the change in the potential.
In the row optimality condition~\eqref{eq:sub-kkt}, substitute
$C=\Gamma+\alpha_*e_n^\top+e_m\beta_*^\top$,
$Z^k=-\rho e_m(z^k)^\top$ and $z_*=\beta_*/\rho$ to obtain
\begin{equation} \label{eq-tmp1}
\log X^k-\log Y^k
=-\Gamma/\rho+e_m(z^k-z_*)^\top
 -\rho^{-1}(\alpha_*+r^k)e_n^\top.
\end{equation}
Exponentiating and
imposing $X^ke_n=a$, together with the column update in
Proposition~\ref{prop:ot-equivalent-updates}(ii), gives
\begin{equation}\label{eq:ot-potential-update}
X^k=\mathcal R_a\!\left(Y^k\odot
 e^{-\Gamma/\rho+e_m(z^k-z_*)^\top}\right),\qquad
Y^{k+1}=\mathcal C_b(X^k).
\end{equation}

The Bregman three-point identity~\cite[Lemma~2.1]{yang2022bregman},
applied to $(P,X^k,Y^k)$, yields
\begin{equation*}
D_H(P;X^k)-D_H(P;Y^k)
=-D_H(X^k;Y^k)+\langle\log X^k-\log Y^k,X^k-P\rangle.
\end{equation*}
Since $(X^k-P)e_n=0$, the last term in the logarithmic
relation in \eqref{eq-tmp1} has zero inner product with $X^k-P$.
Using also $\langle\Gamma,P\rangle=0$ and $P^\top e_m=b$, we obtain
\begin{eqnarray*}
D_H(P;X^k)-D_H(P;Y^k)
&=&-D_H(X^k;Y^k)-\rho^{-1}\langle\Gamma,X^k\rangle
+\langle (X^k)^\top e_m-b,z^k-z_*\rangle.
\end{eqnarray*}
By~\eqref{eq:ot-potential-update}, $Y^{k+1}$ is the entropy projection
of $X^k$ onto $\cY$. Since $Y^{k+1},P\in\cY$, 
the Bregman three-point identity
applied to $(P,Y^{k+1},X^k)$ and using
\eqref{eq:sub-kkt}, yields the Pythagorean identity:
\begin{equation*}
D_H(P;X^k)=D_H(P;Y^{k+1})+D_H(Y^{k+1};X^k).
\end{equation*}
For the multiplier term, substitute~\eqref{eq:ot-residual-update}
into the difference of the two squared norms to obtain
\begin{equation*}
\frac{\|z^{k+1}-z_*\|_b^2-\|z^k-z_*\|_b^2}{2s}
=-\langle\log(e_n+c_X^k),z^k-z_*\rangle_b
 +\frac{s}{2}\|\log(e_n+c_X^k)\|_b^2.
\end{equation*}
Using the fact that 
$(X^k)^\top e_m-b=b\odot c_X^k$ by the definition~\eqref{eq:ot-marginal-residuals}, we get
\begin{eqnarray} 
&&\hspace{-10mm}
\mathcal E_P(Y^{k+1},z^{k+1};z_*)-\mathcal E_P(Y^k,z^k;z_*)
= D_H(P;Y^{k+1}) - D_H(P;Y^k) + 
\frac{\|z^{k+1}-z_*\|_b^2-\|z^k-z_*\|_b^2}{2s}\nonumber
\\
&=&-\rho^{-1}\langle\Gamma,X^k\rangle-D_H(X^k;Y^k)
 -D_H(Y^{k+1};X^k)\nonumber \\
&&+\langle c_X^k-\log(e_n+c_X^k),z^k-z_*\rangle_b
 +\frac{s}{2}\|\log(e_n+c_X^k)\|_b^2.
 \label{eq-potential-difference}
\end{eqnarray}

Now we estimate $D_H(X^k;Y^k)$ and $D_H(Y^{k+1};X^k)$. 
Note that
$X^k_{\cdot j}=(1+c_{X,j}^k)Y^{k+1}_{\cdot j}$ from
\eqref{eq:ot-potential-update}. Both $Y^k_{\cdot j}$ and
$Y^{k+1}_{\cdot j}$ have entry sum $b_j$, so the definition of $D_H$ gives
\begin{equation*}
D_H(X^k_{\cdot j};Y^k_{\cdot j})=(1+c_{X,j}^k)D_H(Y^{k+1}_{\cdot j};Y^k_{\cdot j})+b_j\big[(1+c_{X,j}^k)\log(1+c_{X,j}^k)-c_{X,j}^k\big].
\end{equation*}
The sum of the last terms over $j$ is $D_H((X^k)^\top e_m;b)$.
Consequently,
\begin{equation}\label{eq:ot-entropy-remainder}
D_H(X^k;Y^k)-D_H((X^k)^\top e_m;b)
=\sum_j(1+c_{X,j}^k)D_H(Y^{k+1}_{\cdot j};Y^k_{\cdot j})\geq0.
\end{equation}
The same column relation gives the two formulas
\begin{equation*}
\begin{aligned}
D_H((X^k)^\top e_m;b)
&=\sum_jb_j\big[(1+c_{X,j}^k)\log(1+c_{X,j}^k)-c_{X,j}^k\big],\\
D_H(Y^{k+1};X^k)
&=\sum_jb_j\big[c_{X,j}^k-\log(1+c_{X,j}^k)\big].
\end{aligned}
\end{equation*}
Adding them yields
\begin{equation*}
D_H((X^k)^\top e_m;b)+D_H(Y^{k+1};X^k)
=\langle c_X^k,\log(e_n+c_X^k)\rangle_b.
\end{equation*}
In this inner product, write
$c_X^k=\log(e_n+c_X^k)+(c_X^k-\log(e_n+c_X^k))$.
Substituting into the potential difference \eqref{eq-potential-difference} gives
\begin{equation}\label{eq:ot-radial-identity}
\begin{aligned}
&\mathcal E_P(Y^{k+1},z^{k+1};z_*)-\mathcal E_P(Y^k,z^k;z_*)\\
&\quad=-\rho^{-1}\langle\Gamma,X^k\rangle
-\big[D_H(X^k;Y^k)-D_H((X^k)^\top e_m;b)\big]
-\left(1-\frac{s}{2}\right)\|\log(e_n+c_X^k)\|_b^2\\
&\qquad+\big\langle c_X^k-\log(e_n+c_X^k),
 z^k-z_*-\log(e_n+c_X^k)\big\rangle_b.
\end{aligned}
\end{equation}

\textup{(ii)} We next show that a small potential makes
$c_X^k$ and $z^k-z_*$ small.
The definition~\eqref{eq:ot-radial-energy} directly controls
$D_H(P;Y^k)$ and $\|z^k-z_*\|_b$, but we also need to control $X^k$.
For this purpose, define the input to the row projection by
$W\coleq Y^k\odot e^{-\Gamma/\rho+e_m(z^k-z_*)^\top}$.
Equation~\eqref{eq:ot-potential-update} gives $X^k=\mathcal R_a(W)$.
Since $X^k,P\in\cX$,
the Bregman three-point identity
applied to $(P,X^k,W)$ and using \eqref{eq-tmp1}, yields the Pythagorean identity:
\begin{equation*}
D_H(P;W)=D_H(P;X^k)+D_H(X^k;W)\geq D_H(P;X^k).
\end{equation*}
Using $P^\top e_m=b$ in the definition of $D_H$, we have
\begin{equation*}
D_H(P;W)=D_H(P;Y^k)+\rho^{-1}\langle\Gamma,P\rangle-\langle b,z^k-z_*\rangle+\sum_{i,j}Y^k_{ij}\big(e^{-\Gamma_{ij}/\rho+z_j^k-(z_*)_j}-1\big).
\end{equation*}
Complementarity gives $\langle\Gamma,P\rangle=0$.
Since $\Gamma\geq0$ and $(Y^k)^\top e_m=b$, dropping
$-\Gamma_{ij}/\rho$ in the exponent gives an upper bound.
Then $e^t-1-t\leq e^{|t|}t^2/2$ implies
\begin{equation*}
\begin{aligned}
D_H(P;X^k)
&\leq D_H(P;Y^k)
 +\sum_jb_j\big(e^{z_j^k-(z_*)_j}-1-z_j^k+(z_*)_j\big)\\
&\leq D_H(P;Y^k)
 +\tfrac12e^{\|z^k-z_*\|_\infty}\|z^k-z_*\|_b^2\\
&\leq\max\{1,se^{\|z^k-z_*\|_\infty}\}
       \mathcal E_P(Y^k,z^k;z_*).
\end{aligned}
\end{equation*}

The matrices $P,X^k$ have entry sum one. Pinsker's inequality
and~\eqref{eq:ot-radial-energy} therefore give
\begin{equation} \label{eq-tmp3}
\|P-X^k\|_1^2\leq2D_H(P;X^k),\qquad
\|z^k-z_*\|_\infty^2\leq\frac{2s\mathcal E_P(Y^k,z^k;z_*)}{\min_jb_j}.
\end{equation}
Moreover, $P^\top e_m=b$ and~\eqref{eq:ot-marginal-residuals} imply
\begin{equation*}
\|c_X^k\|_\infty^2
\leq\frac{\|(X^k-P)^\top e_m\|_\infty^2}{(\min_jb_j)^2}
\leq\frac{\|X^k-P\|_1^2}{(\min_jb_j)^2}
\leq \frac{2\max\{1,se^{\|z^k-z_*\|_\infty}\}}{(\min_jb_j)^2}
\mathcal E_P(Y^k,z^k;z_*)
.
\end{equation*}
These bounds show that $\|c_X^k\|_\infty$ and $\|z^k-z_*\|_\infty$
tend to zero as the potential $\mathcal E_P(Y^k,z^k;z_*)$ tends to zero, uniformly in $\rho>0$.
Once $\|c_X^k\|_\infty\leq1/2$, the derivative bound for
$\log(1+t)$ on $|t|\leq1/2$ gives
$\|\log(e_n+c_X^k)\|_\infty\leq2\|c_X^k\|_\infty$.
Hence this logarithmic norm tends to zero uniformly as well.
Note that the bounds use only $b,s$, complementarity and $\Gamma\geq0$;
no positive lower bound on the nonzero entries of $\Gamma$ is needed.

\textup{(iii)} We now bound the last inner product 
in~\eqref{eq:ot-radial-identity}.
For $u>-1$, $u-\log(1+u)\geq0$, and part~\textup{(i)} gives
\begin{equation*}
\sum_j b_j\big[c_{X,j}^k-\log(1+c_{X,j}^k)\big]
=D_H(Y^{k+1};X^k).
\end{equation*}
Taking absolute values term by term and bounding each component
of $z^k-z_*-\log(e_n+c_X^k)$ by the sum of the two maximum norms gives
\begin{equation*}
\begin{aligned}
&\left|\big\langle c_X^k-\log(e_n+c_X^k),
 z^k-z_*-\log(e_n+c_X^k)\big\rangle_b\right|\\
&\quad\leq
\big(\|z^k-z_*\|_\infty+\|\log(e_n+c_X^k)\|_\infty\big)
D_H(Y^{k+1};X^k).
\end{aligned}
\end{equation*}
Apply $e^t-1-t\leq e^{|t|}t^2/2$ with
$t=\log(1+c_{X,j}^k)$ and sum with weights $b_j$. The same formula
for $D_H(Y^{k+1};X^k)$ yields
\begin{equation*}
D_H(Y^{k+1};X^k)
\leq\tfrac12e^{\|\log(e_n+c_X^k)\|_\infty}
\|\log(e_n+c_X^k)\|_b^2.
\end{equation*}
Since $2-s>0$, it follows that
\begin{equation*}
-\left(1-\frac{s}{2}\right)\|\log(e_n+c_X^k)\|_b^2
\leq-(2-s)e^{-\|\log(e_n+c_X^k)\|_\infty}D_H(Y^{k+1};X^k).
\end{equation*}

By part~\textup{(ii)}, we can choose $h>0$, independently of $\rho$,
so that $\mathcal E_P(Y^k,z^k;z_*)\leq h$ implies
$\|c_X^k\|_\infty\leq1/2$ and
\begin{equation*}
(2-s)e^{-\|\log(e_n+c_X^k)\|_\infty}
-\|z^k-z_*\|_\infty-\|\log(e_n+c_X^k)\|_\infty
\geq(2-s)/2.
\end{equation*}
Such a choice is possible because the left-hand side tends to
$2-s$ as the potential tends to zero.
The last two terms in~\eqref{eq:ot-radial-identity} are therefore
bounded above by $-(2-s)D_H(Y^{k+1};X^k)/2$.
Also $1+c_{X,j}^k\geq1/2$, so~\eqref{eq:ot-entropy-remainder} gives
\begin{equation*}
D_H(X^k;Y^k)-D_H((X^k)^\top e_m;b)
\geq\tfrac12\sum_jD_H(Y^{k+1}_{\cdot j};Y^k_{\cdot j})
=\tfrac12D_H(Y^{k+1};Y^k).
\end{equation*}
Substituting these two bounds into~\eqref{eq:ot-radial-identity}
proves~\eqref{eq:ot-entropy-descent} with the claimed choice of $h$.
\end{proof}

For $W\geq0$, write $\tau(W)\coleq\sum_{(i,j)\notin S}W_{ij}$.
Fix a dual optimal solution $(\alpha_*,\beta_*)$ satisfying
\eqref{eq:ot-strict-certificate}, with slack $\Gamma$.
The next proposition {states that 
the potential~\eqref{eq:ot-radial-energy}
enters
an arbitrarily small sublevel set.}
Its control of $\tau(Y^k)$ will also bound the marginal residuals.
\begin{proposition}[Entry and summable errors]\label{prop:ot-feasibility-sum}
Let $P_*$ be defined as in \eqref{eq:ot-selected}.
For every sufficiently small $h>0$, there exists
$\rho_0(C,a,b,X^0,s,h)>0$ such that, for each fixed $\rho\geq\rho_0$,
there is $N\geq1$ with
\begin{equation}\label{eq:ot-captured-potential}
\mathcal E_{P_*}(Y^k,z^k;\beta_*/\rho)\leq h
\qquad \forall\; k\geq N.
\end{equation}
Moreover,
\begin{equation}\label{eq:ot-feasibility-summable}
\begin{gathered}
\sum_{k=1}^{\infty}\big[D_H(Y^{k+1};X^k)+D_H(Y^{k+1};Y^k)\big]<\infty,\\
\sum_{k=1}^{\infty}\big[\tau(X^k)+\tau(Y^k)\big]<\infty.
\end{gathered}
\end{equation}
\end{proposition}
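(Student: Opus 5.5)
The plan follows the order indicated in Figure~\ref{fig:ot-proof-flowchart}: finite-time tracking brings the potential below $h$ at some index, Lemma~\ref{prop:ot-entropy-identity} keeps it there for all later indices, and telescoping the descent inequality yields the summability claims.

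\textbf{Choice of constants.} Let $h_0>0$ be the threshold of Lemma~\ref{prop:ot-entropy-identity} for $P=P_*$ and the strictly complementary dual solution $(\alpha_*,\beta_*)$. It is independent of $\rho$, and we take $h\leq h_0$.

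\textbf{Reaching the sublevel set.} We show $\mathcal E_{P_*}(Y^N,z^N;\beta_*/\rho)\leq h$ at the index $N\coleq\lfloor T\rho\rfloor+1$ for a suitable $T$. The potential has two terms, and we bound each by $h/2$.
\begin{itemize}
\item \emph{Primal term.} Write
\[
D_H(P_*;Y^N)\leq D_H(P_*;P(T))+\bigl|D_H(P_*;Y^N)-D_H(P_*;P(T))\bigr|.
\]
By Proposition~\ref{prop:ot-entropy-path}, choose $T$ so that $D_H(P_*;P(T))\leq h/4$. The path is positive on $[0,T+1]$, and $Y^N$ is within $C_T/\rho$ of $P((N-1)/\rho)$ by Lemma~\ref{lem:ot-path-one-step}. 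Since $(N-1)/\rho\in(T-1/\rho,T]$, the path derivative bound gives $\|P((N-1)/\rho)-P(T)\|_F=O(1/\rho)$. The map $Y\mapsto D_H(P_*;Y)$ is Lipschitz on a positive compact set, so the second term is $O(1/\rho)$.
\item \emph{Dual term.} By Lemma~\ref{lem:ot-path-one-step}, $\|z^N-z^1\|=\|z^N\|\leq C_T/\rho$, since $z^1=0$. Also $z_*=\beta_*/\rho$, so $\|z^N-z_*\|_b\leq(C_T+\|\beta_*\|)/\rho$ up to a weight constant.
\end{itemize}
Enlarging $\rho_0$ beyond the tracking threshold makes both error terms at most $h/4$. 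This is exactly why $h$ is fixed first, then $T$, then $\rho$.

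\textbf{Staying in the sublevel set and telescoping.} Lemma~\ref{prop:ot-entropy-identity} gives monotone decrease of the potential whenever it is at most $h\leq h_0$. By induction, \eqref{eq:ot-captured-potential} holds for all $k\geq N$. Summing \eqref{eq:ot-entropy-descent} from $k=N$ onward and using nonnegativity of the potential gives
\[
\sum_{k\geq N}\Bigl[\rho^{-1}\langle\Gamma,X^k\rangle+\tfrac12D_H(Y^{k+1};Y^k)+\tfrac{2-s}{2}D_H(Y^{k+1};X^k)\Bigr]\leq h .
\]
The finitely many terms with $k<N$ are finite, which proves the first sum in \eqref{eq:ot-feasibility-summable}. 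Strict complementarity \eqref{eq:ot-strict-certificate} gives $\gamma_S\coleq\min_{S^c}\Gamma>0$, so $\langle\Gamma,X^k\rangle\geq\gamma_S\tau(X^k)$ and $\sum\tau(X^k)<\infty$.

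\textbf{Passing to $\tau(Y^k)$.} By Proposition~\ref{prop:ot-equivalent-updates}(ii), $Y^{k+1}_{\cdot j}=X^k_{\cdot j}/(1+c^k_{X,j})$. For $k\geq N$ we have $\|c_X^k\|_\infty\leq1/2$, from part~(ii) of the proof of Lemma~\ref{prop:ot-entropy-identity}. Hence $\tau(Y^{k+1})\leq2\tau(X^k)$, and $\sum\tau(Y^k)<\infty$ follows.

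\textbf{Main obstacle.} The delicate step is the entry step. It needs uniformity in $\rho$ of the tracking constants for fixed $T$, together with the Lipschitz estimate of $D_H(P_*;\cdot)$ near $P(T)$ while $P_*$ may have zeros. The latter is harmless because the $\log Y$ terms are evaluated only at positive matrices near the compact positive path. One must also check that $\beta_*/\rho\to0$ is compatible with $z^N$ small, which it is.
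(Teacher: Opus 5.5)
Your proposal is correct and follows the paper's proof essentially step for step. You fix $h$, then $T$, then $\rho_0$; use finite-time tracking to make the potential small at one index $N$; propagate this with the Bregman descent lemma; telescope; and finish with strict complementarity and $\tau(Y^{k+1})\leq 2\tau(X^k)$. The only difference is in the entry step. The paper takes $N=\lceil T\rho\rceil+1$, so that $(N-1)/\rho\geq T$, splits $D_H(P_*;Y^N)$ by the Pythagorean identity with $\mathcal P(Y^N;a,b)=P((N-1)/\rho)$, and gets an $O(\rho^{-2})$ error. You take $N=\lfloor T\rho\rfloor+1$ and use a Lipschitz bound for $D_H(P_*;\cdot)$ near $P(T)$, which gives an $O(\rho^{-1})$ error and is equally sufficient.
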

\begin{proof}
\textup{(i)} We first choose a path point with small entropy distance from $P_*$.
Fix $h>0$ below the threshold in Lemma~\ref{prop:ot-entropy-identity}.
By~\eqref{eq:ot-path-limit}, there is $T<\infty$ such that
$D_H(P_*;P(t))<h/4$ for every $t\geq T$.
For $\rho\geq1$, set $N\coleq\lceil T\rho\rceil+1$.
Then $T\leq(N-1)/\rho\leq T+1$ and
$N\leq\lfloor(T+1)\rho\rfloor+1$.
Proposition~\ref{prop:ot-iterate-projections}(i) and $Z^0=0$ give
\begin{equation}\label{eq:ot-path-identity}
\mathcal P(Y^k;a,b)=P((k-1)/\rho),\qquad k\geq1.
\end{equation}
Since $P_*\in\Pi$, applying~\eqref{eq:ot-projection-pythagoras} with
$W=Y^N$, $Q=P((N-1)/\rho)$ and $V=P_*$ yields
\begin{equation*}
D_H(P_*;Y^N)=D_H(P_*;P((N-1)/\rho))+D_H(P((N-1)/\rho);Y^N).
\end{equation*}
Thus it remains to control the second divergence and the multiplier at index $N$.

\textup{(ii)} We show that the actual iterate at this index has small potential.
The continuous positive path on $[0,T+1]$ has an entrywise lower bound
$\gamma>0$. Lemma~\ref{lem:ot-path-one-step}, applied on this interval,
gives a constant $C_{T+1}$ independent of $\rho$ such that
$\|Y^N-P((N-1)/\rho)\|_F\leq C_{T+1}/\rho$ for all sufficiently large $\rho$.
Increase the lower bound on $\rho$ so that $C_{T+1}/\rho\leq\gamma/2$.
Every entry of $Y^N$ is then at least $\gamma/2$.
Using $D_H(Q;Y)\leq\sum_{ij}(Q_{ij}-Y_{ij})^2/Y_{ij}$ gives
\begin{equation*}
D_H(P((N-1)/\rho);Y^N)
\leq\frac{2}{\gamma}\|P((N-1)/\rho)-Y^N\|_F^2
\leq\frac{2C_{T+1}^2}{\gamma\rho^2}.
\end{equation*}
Also $z^1=0$ and $N-1\leq\lfloor(T+1)\rho\rfloor$.
The second bound \eqref{eq:ot-finite-time-tracking} in Lemma~\ref{lem:ot-path-one-step} therefore implies
$\|z^N\|\leq\sum_{k=1}^{N-1}\|z^{k+1}-z^k\|\leq C_{T+1}/\rho$.
Consequently,
\begin{equation*}
\frac{\|z^N-\beta_*/\rho\|_b^2}{2s}
\leq\frac{\|b\|_\infty(C_{T+1}+\|\beta_*\|)^2}{2s\rho^2}.
\end{equation*}
Combining these bounds with part~\textup{(i)} gives a constant $L_1>0$,
independent of $\rho$, such that
\begin{equation}\label{eq-tmp2}
\mathcal E_{P_*}(Y^N,z^N;\beta_*/\rho)
\leq D_H(P_*;P((N-1)/\rho))+L_1/\rho^2<h/2
\end{equation}
for every $\rho\geq\rho_0$, after choosing $\rho_0$ so that $L_1/\rho_0^2<h/4$.
The choices are in the order $h$, $T$, then $\rho_0$.

\textup{(iii)} We next prove that the potential stays small and that the errors are summable.
At index $N$, the strict bound \eqref{eq-tmp2} in part~\textup{(ii)} allows us to apply
\eqref{eq:ot-entropy-descent}. Its three subtracted terms are nonnegative,
so the potential at $N+1$ is also below $h$. Induction proves
\eqref{eq:ot-captured-potential} and permits the same descent inequality
at every $k\geq N$.
Summing it from $N$ to $K$ cancels the intermediate potentials.
The last potential is nonnegative, so letting $K\to\infty$ gives
\begin{equation*}
\sum_{k=N}^\infty\left[
\rho^{-1}\langle\Gamma,X^k\rangle
+\frac12D_H(Y^{k+1};Y^k)
+\frac{2-s}{2}D_H(Y^{k+1};X^k)\right]
\leq\mathcal E_{P_*}(Y^N,z^N;\beta_*/\rho).
\end{equation*}
If $S^c\ne\varnothing$, then~\eqref{eq:ot-strict-certificate} gives
$\Gamma_{ij}>0$ on $S^c$, so $\min_{S^c}\Gamma_{ij}>0$. Since $\rho$ is fixed,
$\rho^{-1}\langle\Gamma,X^k\rangle\geq
\rho^{-1}(\min_{S^c}\Gamma_{ij})\tau(X^k)$,
which proves $\sum_{k=N}^\infty\tau(X^k)<\infty$.
If $S^c=\varnothing$, these terms vanish.
The threshold in Lemma~\ref{prop:ot-entropy-identity} also ensures
$\|c_X^k\|_\infty\leq1/2$ for $k\geq N$.
The column update $Y^{k+1}_{ij}=X^k_{ij}/(1+c_{X,j}^k)$ thus gives
$\tau(Y^{k+1})\leq2\tau(X^k)$.
Including the finite initial segment proves~\eqref{eq:ot-feasibility-summable}. 
This completes the proof.
\end{proof}

\subsection{KKT convergence and finite length}\label{sec:ot-kkt-convergence}
Proposition~\ref{prop:ot-feasibility-sum} gives
$\sum_k \tau(Y^k)<\infty$.
We now bound the marginal residuals near $P_*$ in terms of these entries. First, an elementary projection argument
bounds $\|Q-P_*\|_F$ by the marginal errors of $Q$ under the
conditions in the following lemma.
\begin{lemma}[Error bound for diagonal scalings of $P_*$]\label{lem:ot-support-recovery}
There are $\delta,L>0$ such that every positive diagonal
scaling $Q$ of $P_*$ on $S$ with $\|Q-P_*\|_F\leq\delta$ satisfies
\begin{equation}\label{eq:ot-support-recovery}
\|Q-P_*\|_F\leq L\big(\|Qe_n-a\|+\|Q^\top e_m-b\|\big).
\end{equation}
Here $Q$ is extended by zero outside the support set $S$.
\end{lemma}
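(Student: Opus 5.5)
Write $Q=\Diag(e^{\alpha})P_*\Diag(e^{\beta})$ on $S$, with $Q$ extended by zero on $S^c$. Then $Q$ lies in the smooth family $\Phi(\alpha,\beta)\coleq\Diag(e^\alpha)P_*\Diag(e^\beta)$, supported on $S$. The parameters $(\alpha,\beta)$ are determined only up to the gauge freedom $(\alpha+\gamma_d e,\beta-\gamma_d e)$ on each connected component $(I_d,J_d)$ of the support graph of $P_*$. Since $\operatorname{supp}P_*=S$ by Proposition~\ref{prop:ot-entropy-path}, these components are exactly those of the graph with edge set $S$. We fix the gauge by requiring $\sum_{i\in I_d}a_i\alpha_i=\sum_{j\in J_d}b_j\beta_j$ on each component. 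Call this linear subspace $V$.

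\textbf{Step 1: reduce to a local injectivity statement.} Consider the marginal map $G(\alpha,\beta)\coleq(\Phi(\alpha,\beta)e_n-a,\ \Phi(\alpha,\beta)^\top e_m-b)$. Both $\Phi$ and $G$ are smooth and vanish at $0$. The derivative of $\Phi$ at $0$ is $(\alpha,\beta)\mapsto\Diag(\alpha)P_*+P_*\Diag(\beta)$, and
\[
DG(0)(\alpha,\beta)=\big(\Diag(a)\alpha+P_*\beta,\ P_*^\top\alpha+\Diag(b)\beta\big).
\]
The aim is to show that $DG(0)$ is injective on $V$. If so, the inverse function theorem (or simply a uniform lower bound $\|DG(0)v\|\ge\mu\|v\|$ combined with a second-order Taylor remainder) gives $\|G(v)\|\geq(\mu/2)\|v\|$ for small $v\in V$. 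Also $\|\Phi(v)-P_*\|_F\leq L'\|v\|$. Together these give~\eqref{eq:ot-support-recovery} for $Q=\Phi(v)$ with $v$ small.

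\textbf{Step 2: injectivity.} Suppose $DG(0)(\alpha,\beta)=0$. Pairing the first component with $\alpha$ and the second with $\beta$ and adding gives
\[
\sum_{(i,j)\in S}(P_*)_{ij}(\alpha_i+\beta_j)^2=0,
\]
so $\alpha_i=-\beta_j$ on every edge. By connectedness, as in the proof of Lemma~\ref{lem:ot-linear-contraction}(i), $\alpha\equiv\gamma_d$ on $I_d$ and $\beta\equiv-\gamma_d$ on $J_d$. The gauge condition then reads $\gamma_d\sum_{I_d}a_i=-\gamma_d\sum_{J_d}b_j$. By Lemma~\ref{lem:ot-marginal-sums} both sums are equal and positive, so $\gamma_d=0$.

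\textbf{Step 3: from smallness of $Q$ to smallness of the parameters.} This is the main obstacle. The hypothesis is $\|Q-P_*\|_F\leq\delta$, not smallness of $(\alpha,\beta)$. Since $P_*>0$ on $S$, closeness of $Q$ makes every ratio $Q_{ij}/(P_*)_{ij}=e^{\alpha_i+\beta_j}$ close to $1$ on $S$, so $\alpha_i+\beta_j$ is small on every edge. Along a path within a component, the differences $\alpha_i-\alpha_{i'}$ and $\beta_j-\beta_{j'}$ are therefore bounded by a multiple of $\max_S|\alpha_i+\beta_j|$, with a constant depending only on the graph. After the gauge normalization on $V$, a weighted average of $\alpha_i+\beta_j$ over the component controls the common offset. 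Hence $\|(\alpha,\beta)\|$ is small, with the bound depending only on $P_*$, and choosing $\delta$ small places $v$ in the neighborhood of Step 1.
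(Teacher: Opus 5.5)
Your proof is correct, but it takes a different route from the paper's.

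\textbf{Your route.} You linearize the marginal map along the scaling orbit. After fixing the gauge on each component, you show that $DG(0)$ is injective on $V$ through the quadratic form $\sum_{S}(P_*)_{ij}(\alpha_i+\beta_j)^2$, which is the same connectivity mechanism as in Lemma~\ref{lem:ot-linear-contraction}. A Taylor remainder then gives $\|G(v)\|\geq(\mu/2)\|v\|$. Of your two options, the Taylor-remainder version is the one that literally applies, since $G|_V$ maps into a space of different dimension.

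Because the hypothesis is on $\|Q-P_*\|_F$ rather than on the parameters, you need Step~3, and it works. The identity $\sum_{S_d}(P_*)_{ij}(\alpha_i+\beta_j)=\sum_{I_d}a_i\alpha_i+\sum_{J_d}b_j\beta_j$, combined with your gauge condition, makes both weighted means $O(\delta)$. The edge-wise bounds $|\alpha_i+\beta_j|=O(\delta)$ control the spread along paths.

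\textbf{The paper's route.} The paper argues variationally. Since $\log(P_*/Q)$ has the form $u_i+v_j$ on $S$, $P_*$ is the entropy projection of $Q$ onto $\{V\in\Pi: V|_{S^c}=0\}$. The paper compares $P_*$ with the feasible correction $\widehat Q=Q-A^\dagger(AQ-AP_*)$. Strong convexity of the entropy near $P_*$ and the chi-square upper bound then give $\tfrac14\|P_*-Q\|_F^2\leq D_H(P_*;Q)\leq D_H(\widehat Q;Q)\leq\|\widehat Q-Q\|_F^2/\min_SQ_{ij}$.

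\textbf{Comparison.} The paper's argument avoids gauge fixing, the component-wise analysis, Taylor remainders and your Step~3, because it uses the hypothesis on $\|Q-P_*\|_F$ directly. It also yields explicit constants: $\delta<\tfrac12\min_S(P_*)_{ij}$ with $\delta\leq1$, and $L$ in terms of $\|A^\dagger\|$ and $\min_S(P_*)_{ij}$.

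Your argument exposes the underlying transversality between the tangent space of the scaling orbit and the kernel of the marginal map. It does not use the entropy variational structure, so it would apply to any smooth parametrization with that property. The price is less explicit constants: the smallest singular value of $DG(0)$ on $V$, second-order Taylor bounds, and path lengths in the support graph.
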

\begin{proof}
For convenience, we let $P=P_*$ in this proof.
All Bregman distances in this proof are evaluated over the
entries indexed by $S$.
We first construct a feasible matrix that can be compared with $Q$.
On the Euclidean space of matrices supported in $S$, define the linear map
$AV\coleq(Ve_n,V^\top e_m)$, with the usual Euclidean norm on
$\mathbb R^m\times\mathbb R^n$, and let $A^\dagger$ be its Moore--Penrose inverse.
The correction $A^\dagger(AQ-AP)$ is supported in $S$.
Since $AQ-AP$ is in the range of $A$, the matrix
$\widehat Q\coleq Q-A^\dagger(AQ-AP)$ satisfies $A\widehat Q=AP$.
Moreover, $I-A^\dagger A$ is an orthogonal projector, so
\begin{equation*}
\|\widehat Q-P\|_F\leq\|Q-P\|_F,\qquad
\|\widehat Q-Q\|_F\leq\|A^\dagger\|\,\|AQ-AP\|.
\end{equation*}
Choose $\delta<\tfrac12\min_SP_{ij}$ and $\delta\leq1$.
If $\|Q-P\|_F\leq\delta$, these bounds show that both $Q$ and
$\widehat Q$ are positive on $S$. Thus $\widehat Q\in\Pi$ and
$\widehat Q|_{S^c}=0$.

We next use the entropy projection of $Q$ to bound $\|Q-P\|_F$.
Since $Q$ is a positive diagonal scaling of $P$, there exist vectors
$u\in\mathbb R^m$ and $v\in\mathbb R^n$ such that
$\log(P_{ij}/Q_{ij})=u_i+v_j$ on $S$.
For every $V\in\Pi$ supported in $S$,
\begin{equation*}
\sum_{(i,j)\in S}\log(P_{ij}/Q_{ij})(V_{ij}-P_{ij})
=u^\top(V-P)e_n+v^\top(V-P)^\top e_m=0.
\end{equation*}
This is the first-order optimality condition for $P$ as a minimizer of $\min\{D_H(V;Q)\mid V\in\Pi,\ V|_{S^c}=0\}$.
{The minimizing property of $P$} therefore gives $D_H(P;Q)\leq D_H(\widehat Q;Q)$.
Because $P_{ij}\leq1$ and $\delta\leq1$, every entry on the segment
between $P$ and $Q$ is at most two. The entropy second derivative
$1/x$ gives $D_H(P;Q)\geq\|P-Q\|_F^2/4$.
For the other divergence, $\log t\leq t-1$ and
$\min_SQ_{ij}\geq\tfrac12\min_SP_{ij}$ imply
\begin{equation*}
\frac14\|P-Q\|_F^2
\leq D_H(P;Q)\leq D_H(\widehat Q;Q)
\leq\frac{\|\widehat Q-Q\|_F^2}{\min_SQ_{ij}}
\leq\frac{2\|A^\dagger\|^2}{\min_SP_{ij}}\|AQ-AP\|^2.
\end{equation*}
Taking square roots and using
$\|AQ-AP\|\leq\|Qe_n-a\|+\|Q^\top e_m-b\|$
proves~\eqref{eq:ot-support-recovery}.
\end{proof}

The next lemma gives an upper bound for
$\|X^k-P_*\|_F+\|Y^k-P_*\|_F+\|\log(e_n+c_X^k)\|_b$.
It will give both summability in this subsection and the linear rate in
Section~\ref{sec:ot-strict-linear}.
\begin{lemma}[Primal and marginal error bound]\label{lem:ot-supported-estimates}
There are $h_1,K_1,K_2>0$ and $q\in(0,1)$, independent of $\rho$, such that
if~\eqref{eq:ot-captured-potential} holds with $h\leq h_1$, then, for all $k\geq N$,
\begin{equation}\label{eq:ot-supported-tail}
\|X^k-P_*\|_F+\|Y^k-P_*\|_F+\|\log(e_n+c_X^k)\|_b
\leq K_1q^{k-N}+K_2\sum_{j=N}^kq^{k-j}\tau(Y^j).
\end{equation}
\end{lemma}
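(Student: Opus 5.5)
Once the potential is small, the plan is to rerun the residual-contraction argument of Lemma~\ref{lem:ot-path-one-step}, now linearized at the fixed limit $P_*$ instead of along the path. The new feature is that the iterates carry small mass $\tau(Y^k)$ outside $S$, and this mass enters as a forcing term. For the linearization we use $B_{P_*}$, whose contraction on $\mathcal L_{P_*}$ is given by Lemma~\ref{lem:ot-linear-contraction}(i). The components of the support graph of $P_*$ may now be several, so we must also control the component sums.

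\textbf{Step 1: reduction to supported scalings.} Take $h_1$ small so that the bounds of Lemma~\ref{prop:ot-entropy-identity}(ii) hold. Together with Pinsker applied to $D_H(P_*;Y^k)$ and $D_H(P_*;X^k)$, they make $\|X^k-P_*\|_F$, $\|Y^k-P_*\|_F$ and $\|c_X^k\|_\infty$ as small as needed, uniformly in $\rho$. By~\eqref{eq:ot-shifted-projections} and~\eqref{eq:ot-projection-invariance}, $Y^k$ and $X^k$ are diagonal scalings of $X^0\odot e^{-kC/\rho}$. Let $\widetilde Y^k\coleq Y^k|_S$ and $\widetilde X^k\coleq X^k|_S$, extended by zero. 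On $S$ we have $C=\alpha_*e_n^\top+e_m\beta_*^\top$, so both restrictions are positive diagonal scalings of $X^0|_S$. By Proposition~\ref{prop:ot-entropy-path}, $P_*$ is itself a diagonal scaling of $X^0|_S$, so $\widetilde Y^k$ and $\widetilde X^k$ are positive diagonal scalings of $P_*$ on $S$. The marginals of $\widetilde Y^k$ differ from $(a\odot(e_m+r_Y^k),b)$ by at most $\tau(Y^k)$ in $\ell_1$, and similarly for $\widetilde X^k$. Lemma~\ref{lem:ot-support-recovery} then gives
\[
\|Y^k-P_*\|_F+\|X^k-P_*\|_F\leq L'\big(\|r_Y^k\|+\|c_X^k\|+\tau(Y^k)+\tau(X^k)\big).
\]
Next, $\tau(X^k)\lesssim\tau(Y^k)$, because $X^k=\mathcal R_a(Y^k\odot e^{-\Gamma/\rho+e_m(z^k-z_*)^\top})$ with $\Gamma\geq0$, $z^k-z_*$ small and $\mathcal R_a$ preserving positivity factors up to bounded row constants. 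Also $\|\log(e_n+c_X^k)\|_b\leq 2\|c_X^k\|_b$. The lemma is therefore reduced to bounding $d^k\coleq(r_Y^k,c_X^k)$ by the right-hand side of~\eqref{eq:ot-supported-tail}.

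\textbf{Step 2: residual recurrence at $P_*$.} Write $d^{k+1}=F(\cdot;d^k)$ using the restricted scalings. The component sums of $d^k$ over each $(I_d,J_d)$ of $P_*$ are not zero, but Lemma~\ref{lem:ot-marginal-sums} applied to the supported parts shows they are $O(\tau(Y^k)+\tau(X^k))$. Decompose $d^k=\pi d^k+w^k$, where $\pi$ is the projection onto $\mathcal L_{P_*}$ and $\|w^k\|\lesssim\tau(Y^k)$. Repeating the differentiation of Lemma~\ref{lem:ot-residual-one-step}(ii) with $P_*$ in place of $P$, and now with $A=B=P_*$ so that the $\rho^{-1}$ and $\rho^{-2}$ terms are absent, gives
\[
d^{k+1}=B_{P_*}d^k+e^k,\qquad \|e^k\|\leq L\big(\|d^k\|^2+\|Y^k-P_*\|_F\|d^k\|+\tau(Y^k)\big).
\]
Here the $\|Y^k-P_*\|_F$ factor comes from comparing the actual base points with $P_*$. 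Both $\|d^k\|$ and $\|Y^k-P_*\|_F$ are as small as we like for $h\leq h_1$. In the norm $\|\cdot\|_{P_*}$ we then get, with $q\in(q_{P_*},1)$,
\[
\|\pi d^{k+1}\|_{P_*}\leq q\|\pi d^k\|_{P_*}+K\tau(Y^k).
\]
Iterating from $N$ and adding $\|w^k\|$ yields~\eqref{eq:ot-supported-tail}. The constant $K_1$ bounds the initial value $\|d^N\|$ uniformly via the small potential.

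\textbf{Main obstacle.} The delicate part is Step 2. We need a Taylor expansion that is uniform even though the true iterates are not supported on $S$; the difficulty is the two-matrix ratio $(X^k)^{\odot2}\oslash Y^k$ in~\eqref{eq:ot-two-primal-recurrence} on $S^c$, where entries are tiny. I would handle this by splitting the recurrence into its $S$-part, which is smooth near $P_*$ with entries bounded below, and its $S^c$-part. The $S^c$-part contributes to the marginals at most $\sum_{S^c}(X^k_{ij})^2/Y^k_{ij}$. This sum is $O(\tau(Y^k))$ because $X^k_{ij}/Y^k_{ij}$ is bounded: its logarithm is $-\Gamma_{ij}/\rho+(z^k-z_*)_j-(\alpha_*+r^k)_i/\rho$, and the row term is bounded because the row factors are close to those of the $S$-part.
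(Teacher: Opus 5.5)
Your proposal is correct and follows essentially the paper's proof. You reduce to positive diagonal scalings of $P_*$ on $S$ via Lemma~\ref{lem:ot-support-recovery} and the $\rho$-uniform bound on $X^k_{ij}/Y^k_{ij}$. You expand the recurrence~\eqref{eq:ot-two-primal-recurrence} about $P_*$, so that the linear part is $B_{P_*}$ (it depends only on marginals) and the $S^c$ contributions are $O(\tau(Y^k))$. You then remove the component means, which are also $O(\tau(Y^k))$, and apply Lemma~\ref{lem:ot-linear-contraction}. The only caveat is presentational: the map $F(P_*,P_*;\cdot)$ from Lemma~\ref{lem:ot-residual-one-step} is not literally available, since it needs positive base points and compatible component sums, so your Step 2 should be read as a direct expansion of the explicit normalizations in $X^k|_S-P_*$ and $Y^k|_S-P_*$. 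That is what your last paragraph describes and what the paper does.
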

\begin{proof}
Write $P\coleq P_*$. Throughout this proof, $k\geq N$ and
\eqref{eq:ot-captured-potential} holds with $h\leq h_1$.

\textup{(i)} We first bound the matrix errors by the marginal residuals
and the entries outside $S$. The inequality \eqref{eq-tmp3} 
in 
part~\textup{(ii)} of the proof of Lemma~\ref{prop:ot-entropy-identity}
bounds $\|X^k-P\|_F$ and $\|z^k-\beta_*/\rho\|_\infty$ by quantities
that tend to zero with $h_1$, uniformly in $\rho$.
Also $D_H(P;Y^k)\leq h_1$, and Pinsker's inequality gives
$\|Y^k-P\|_F\leq\|Y^k-P\|_1\leq\sqrt{2h_1}$.
We can therefore place both matrices $X^k,Y^k$ in a fixed neighborhood of $P$
by choosing $h_1$ before $\rho$.
By~\eqref{eq:ot-scaling-invariant} and
$C_{ij}=(\alpha_*)_i+(\beta_*)_j$ on $S$, the restriction $Y^k|_S$
is a positive diagonal scaling of $X^0|_S$. The row optimality
condition gives the same property for $X^k|_S$.
The entropy optimality condition in~\eqref{eq:ot-selected} also
expresses $P|_S$ as a positive diagonal scaling of $X^0|_S$.
Thus both iterate restrictions are positive diagonal scalings of $P$,
as required by Lemma~\ref{lem:ot-support-recovery}.

To bound the full matrix errors, we also estimate the entries outside $S$.
For sufficiently small $h_1$, closeness to $P$ ensures that
$\sum_{j:(i,j)\in S}Y^k_{ij}\geq a_i/2$ for every row $i$.
Since $\Gamma=0$ on $S$ and $\Gamma\geq0$ elsewhere,
\eqref{eq:ot-potential-update} gives
\begin{equation} \label{eq-tmp4}
\frac{X^k_{ij}}{Y^k_{ij}}
=\frac{e^{-\Gamma_{ij}/\rho+z_j^k-(\beta_*)_j/\rho}}
 {a_i^{-1}\sum_\ell Y^k_{i\ell}
 e^{-\Gamma_{i\ell}/\rho+z_\ell^k-(\beta_*)_\ell/\rho}}
\leq2e^{2\|z^k-\beta_*/\rho\|_\infty}\leq L_1 \quad \forall\; (i,j), 
\end{equation}
with $L_1$ independent of $\rho$. In particular,
$\tau(X^k)\leq L_1\tau(Y^k)$.
Removing the entries outside $S$ changes either marginal vector
by a vector of norm at most their sum. Thus
\begin{align*}
\|(Y^k|_S)e_n-a\|+\|(Y^k|_S)^\top e_m-b\|
&\leq\|a\|_\infty\|r_Y^k\|+2\tau(Y^k),\\
\|(X^k|_S)e_n-a\|+\|(X^k|_S)^\top e_m-b\|
&\leq\|b\|_\infty\|c_X^k\|+2\tau(X^k).
\end{align*}

Choose $h_1$ small enough to satisfy the neighborhood condition of
Lemma~\ref{lem:ot-support-recovery}. Apply that lemma to the two
restrictions, and then add the omitted entries. This gives $L_2>0$ such that
\begin{equation}\label{eq:ot-supported-reconstruction}
\|X^k-P\|_F+\|Y^k-P\|_F
\leq L_2\big(\|r_Y^k\|+\|c_X^k\|+\tau(Y^k)\big).
\end{equation}

\textup{(ii)} We now derive an estimate for the next pair of marginal residuals $(r^{k+1}_Y,c^{k+1}_X)$.

For this fixed iterate pair $(X^k,Y^k)$, let $H\coleq X^k|_S-P$ and
$G\coleq Y^k|_S-P$, extended by zero outside $S$.
These matrices depend on $k$ through $X^k$ and $Y^k$.
Note that $H= X^k-P - X^k|_{S^c}$ and $G = Y^k-P - Y^k|_{S^c}.$
By the residual definitions in
\eqref{eq:ot-marginal-residuals}, their marginal vectors satisfy
\begin{equation*}
\begin{alignedat}{2}
He_n&=-X^k|_{S^c}e_n,\qquad&
H^\top e_m&=b\odot c_X^k-(X^k|_{S^c})^\top e_m,\\
Ge_n&=a\odot r_Y^k-Y^k|_{S^c}e_n,\qquad&
G^\top e_m&=-(Y^k|_{S^c})^\top e_m.
\end{alignedat}
\end{equation*}
Here the restrictions to $S^c$ are extended by zero.
By \eqref{eq-tmp4},
the vectors $X^k|_{S^c}e_n$ and $(X^k|_{S^c})^\top e_m$ have norms
at most $L_1\tau(Y^k)$. The corresponding vectors formed from $Y^k$
have norms at most $\tau(Y^k)$.
On $S$, the entries of $P+G=Y^k|_S$ stay bounded away from zero, and
\begin{equation} \label{eq-tmp6}
\frac{(X^k_{ij})^2}{Y^k_{ij}} =  \frac{(P_{ij}+H_{ij})^2}{P_{ij}+G_{ij}}
=P_{ij}+2H_{ij}-G_{ij}
 +\frac{(H_{ij}-G_{ij})^2}{P_{ij}+G_{ij}}.
\end{equation}
Outside $S$, the ratio bound \eqref{eq-tmp4}  just proved gives
$\sum_{S^c}(X^k_{ij})^2/Y^k_{ij}\leq L_1^2\tau(Y^k)$.
On $S$, the remainder in the above displayed expansion is quadratic in $H-G$.

For the column update in~\eqref{eq:ot-two-primal-recurrence}, we have for small $\|c^k_X\|$ and $\|H\|_F$,
\begin{eqnarray*}
Y^{k+1}|_S=(P+H)\Diag((e_n+c_X^k)^{-1}) 
= P+H - P \Diag(c^k_X) + \Delta_Y,
\end{eqnarray*}
where $\|\Delta_Y\|_F=O(\|H\|_F^2+\|c_X^k\|^2)$.
Taking the row sums and dividing componentwise by $a$, we get
\begin{equation*}
\begin{aligned}
((Y^{k+1}|_S)e_n)\oslash a-e_m
&=(He_n-Pc_X^k)\oslash a+(\Delta_Ye_n)\oslash a\\
&=-M_Pc_X^k-(X^k|_{S^c}e_n)\oslash a+(\Delta_Ye_n)\oslash a,
\end{aligned}
\end{equation*}
where 
$\|(X^k|_{S^c}e_n)\oslash a\|=O(\tau(Y^k))$.
 Moreover, the column update gives
$\tau(Y^{k+1})\leq2\tau(X^k)\leq2L_1\tau(Y^k)$,
since $\|c_X^k\|_\infty\leq1/2$ in the chosen neighborhood.

For the row update in~\eqref{eq:ot-two-primal-recurrence}, expand
$(e_n+c_X^k)^{-(1+s)}=e_n-(1+s)c_X^k+\mathcal{O}(\|c_X^k\|^2)$. We get after using \eqref{eq-tmp6},
\begin{equation*}
\begin{aligned}
\big((X^k)^{\odot2}\oslash Y^k\big)|_S\Diag((e_n+c_X^k)^{-(1+s)})
&=P+2H-G-(1+s)P\Diag(c_X^k)\\
&\quad+O(\|H\|_F^2+\|G\|_F^2+\|c_X^k\|^2).
\end{aligned}
\end{equation*}
The row sums of the first-order term, 
$2H-G-(1+s)P\Diag(c_X^k)$, are
$-a\odot r_Y^k-(1+s)Pc_X^k$ up to an error bounded by $O(\tau(Y^k))$.
For differentiation at $P$, the algebraic formula for $\mathcal R_a$ is used
on the open set $\{W\in\R^{m\times n}:We_n>0\}$.
Since $Pe_n=a>0$, its denominators stay positive near $P$,
even when $P$ has zero entries, so the formula is smooth there.
The derivative of $\mathcal R_a$ at $P$, calculated in
Lemma~\ref{lem:ot-residual-one-step}, sends a direction $D$ to
$D-\Diag((De_n)\oslash a)P$.
Substituting $D=2H-G-(1+s)P\Diag(c_X^k)$ gives
\begin{equation*}
 X^{k+1}|_S = P + 2H-G-(1+s)P\Diag(c_X^k)
 +\Diag(r_Y^k+(1+s)M_Pc_X^k)P + \Delta_X
\end{equation*}
with the remainder $\Delta_X= 
O(\|H\|_F^2+\|G\|_F^2+\|c_X^k\|^2+\tau(Y^k))$.
Taking the column sums, dividing
by $b$, and substituting
the formulas for $H^\top e_m,G^\top e_m$ gives
\begin{eqnarray*}
\big((X^{k+1}|_S)^\top e_m\big)\oslash b - e_n&=&
2c_X^k-(1+s)c_X^k+N_P(r_Y^k+(1+s)M_Pc_X^k) + O(\|\Delta_X\|_F+\tau(Y^k))
\\
&=& N_Pr_Y^k+[(1-s)I+(1+s)N_PM_P]c_X^k + O(\|\Delta_X\|_F+\tau(Y^k)).
\end{eqnarray*}
Applying the bound in part~\textup{(i)} at iteration $k+1$ and using the estimate for $\tau(Y^{k+1})$ gives
\begin{equation*}
\tau(X^{k+1})\leq L_1\tau(Y^{k+1})\leq2L_1^2\tau(Y^k).
\end{equation*}

Thus 
\begin{eqnarray*}
  (r^{k+1}_Y,c^{k+1}_X) & =& \Big(((Y^{k+1}|_S)e_n)\oslash a -e_m, 
  ((X^{k+1}|_S)^\top e_m)\oslash b - e_n\Big)
  \\
  && + 
  \Big(((Y^{k+1}|_{S^c})e_n)\oslash a, 
  ((X^{k+1}|_{S^c})^\top e_m)\oslash b\Big) 
  \\
  &=& (-M_Pc^k_X, N_Pr_Y^k+[(1-s)I+(1+s)N_PM_P]c_X^k) + e^k,
\end{eqnarray*}
where $\|e^k\|\leq L_3'(\|\Delta_Y\|_F+\|\Delta_X\|_F+\tau(Y^k))$.
The constants in these remainder bounds are uniform on the fixed neighborhood chosen in part~\textup{(i)} and are independent of $\rho$.
By using 
\eqref{eq:ot-supported-reconstruction}
and the fact that $0\leq\tau(Y^k)\leq1$, we get
\begin{equation}\label{eq:ot-supported-expansion}
(r_Y^{k+1},c_X^{k+1})=B_P(r_Y^k,c_X^k)+e^k,\qquad
\|e^k\|\leq L_3\big(\|r_Y^k\|^2+\|c_X^k\|^2+\tau(Y^k)\big)
\end{equation}
for some $L_3>0$ independent of $\rho$.

\textup{(iii)} For each connected component, we subtract the
$a$-weighted mean from $r_Y^k$ and the $b$-weighted mean from $c_X^k$
so that the contraction
in Lemma~\ref{lem:ot-linear-contraction} applies.
The actual residuals need not satisfy the zero weighted sums in
\eqref{eq:ot-component-zero-sums}, but their failure to do so is controlled
by the entries outside $S$.
Let $I_d,J_d$ be the row and column index sets of the connected
components of the support graph of $P=P_*$, as in
Definition~\ref{def:ot-support-graph}. Lemma~\ref{lem:ot-marginal-sums} gives $\sum_{i\in I_d}a_i=\sum_{j\in J_d}b_j$.
Subtract these sums and use the exact column margins of $Y^k$ to obtain
\begin{equation*}
\sum_{i\in I_d}a_ir_{Y,i}^k
=\sum_{i\in I_d,j\notin J_d}Y^k_{ij}
 -\sum_{i\notin I_d,j\in J_d}Y^k_{ij}.
\end{equation*}
Both sets of entries are outside $S$, so $|\sum_{i\in I_d}a_ir_{Y,i}^k| \leq  \tau(Y^k)$. The same calculation using the exact row
margins of $X^k$ gives
$|\sum_{j\in J_d}b_jc_{X,j}^k|\leq\tau(X^k)$.

For the rest of this proof, 
define 
$d^k = (d^k_Y,d^k_X)$ by 
\begin{eqnarray*}
    (d^k_Y)_i &=& (r^k_Y)_i - \sum_{l\in I_d}a_lr_{Y,l}^k\Big/\sum_{l\in I_d}a_l, \quad \forall\; i\in I_d
    \\
    (d^k_X)_j &=& (c^k_X)_j - \sum_{l\in J_d}b_lc_{X,l}^k\Big/\sum_{l\in J_d}b_l, \quad \forall\; j\in J_d.
\end{eqnarray*}
The denominators are fixed and positive. Hence $d^k$ satisfies
\eqref{eq:ot-component-zero-sums} and, for some $L_4>0$,
$\|(r_Y^k,c_X^k)-d^k\|\leq L_4\tau(Y^k)$.

We now derive a recurrence for $d^k$.
Equation~\eqref{eq:ot-supported-expansion} gives
\begin{equation*}
d^{k+1}=B_Pd^k+\widetilde e^k,
\end{equation*}
where
\begin{equation*}
\widetilde e^k\coleq e^k+\big[d^{k+1}-(r_Y^{k+1},c_X^{k+1})\big]+B_P\big[(r_Y^k,c_X^k)-d^k\big].
\end{equation*}
The bound on the removed means and
$\tau(Y^{k+1})\leq2L_1\tau(Y^k)$ from part~\textup{(ii)} imply
\begin{equation*}
\|\widetilde e^k\|\leq\|e^k\|+L_4(2L_1+\|B_P\|)\tau(Y^k),
\end{equation*}
where $\|B_P\|$ is the Euclidean operator norm. Moreover,
\begin{equation*}
\|r_Y^k\|^2+\|c_X^k\|^2\leq2\|d^k\|^2+2L_4^2\tau(Y^k)^2.
\end{equation*}
Both $d^{k+1}$ and $B_Pd^k$ satisfy the equations in
\eqref{eq:ot-component-zero-sums}, so their difference $\widetilde e^k$ does as well.
Lemma~\ref{lem:ot-linear-contraction}(ii), with $K=\{P\}$,
gives fixed constants $m_P,\kappa_P>0$ such that
$m_P\|d\|\leq\|d\|_P\leq\kappa_P\|d\|$ for every $d$ satisfying
\eqref{eq:ot-component-zero-sums}.
Combining these bounds with~\eqref{eq:ot-supported-expansion} and
$\tau(Y^k)^2\leq\tau(Y^k)$ gives a constant $L_5>0$, independent of $\rho$, such that
\begin{equation*}
\|\widetilde e^k\|_P\leq\kappa_P\|\widetilde e^k\|\leq L_5\big(\|d^k\|_P^2+\tau(Y^k)\big).
\end{equation*}
Applying the contraction in Lemma~\ref{lem:ot-linear-contraction}(i) to $B_P(d^k)$, there exists $q_P\in (0,1)$ such that for all $k\geq N$,
\begin{equation*}
\|d^{k+1}\|_P \leq \|B_P(d^k)\|_P + \|\widetilde{e}^k\|_P \leq q_P\|d^k\|_P
 +L_5\big(\|d^k\|_P^2+\tau(Y^k)\big).
\end{equation*}

Fix $q\in(q_P,1)$.
By part~\textup{(i)} and the definitions of the marginal residuals and $d^k$,
there exists $L_6>0$, independent of $\rho$, $k$, and sufficiently small $h_1$, such that
\begin{equation*}
\|d^k\|_P\leq L_6\sqrt{h_1}\qquad\text{for all }k\geq N.
\end{equation*}
Reduce $h_1$, if necessary, so that $L_5L_6\sqrt{h_1}\leq q-q_P$.
Then $q_P+L_5\|d^k\|_P\leq q$ for all $k\geq N$, and hence
\begin{equation}\label{eq:ot-supported-forcing}
\|d^{k+1}\|_P\leq q\|d^k\|_P+L_5\tau(Y^k).
\end{equation}

\textup{(iv)} Finally, we recover the bound on the original iterates.
Since part~\textup{(iii)} applies at every $k\geq N$ under
\eqref{eq:ot-captured-potential}, iterating~\eqref{eq:ot-supported-forcing} gives
\begin{equation*}
\|d^k\|_P\leq q^{k-N}\|d^N\|_P
 +L_5\sum_{j=N}^{k-1}q^{k-1-j}\tau(Y^j),\qquad k\geq N.
\end{equation*}
The initial norm is uniformly bounded by the chosen neighborhood.
The bound on the removed means and $m_P\|d^k\|\leq\|d^k\|_P$ give
\begin{equation*}
\|r_Y^k\|+\|c_X^k\|
\leq\sqrt{2}\big(m_P^{-1}\|d^k\|_P+L_4\tau(Y^k)\big).
\end{equation*}
Substitute this inequality into~\eqref{eq:ot-supported-reconstruction}.
Since $q>0$, the sum up to $k-1$ is bounded by $q^{-1}$ times
the sum in~\eqref{eq:ot-supported-tail}, which also includes
$\tau(Y^k)$.
Finally, $\|c_X^k\|_\infty\leq1/2$ in this neighborhood implies
$\|\log(e_n+c_X^k)\|_b\leq2\|c_X^k\|_b$.
These bounds prove~\eqref{eq:ot-supported-tail} with constants
$K_1,K_2$ independent of $\rho$. 
\end{proof}

For the remaining results in Section~\ref{sec:ot-convergence}, consider
the sequence generated by Algorithm~\ref{alg:ot} with
\begin{equation}\label{eq:ot-convergence-regime}
X^0\in\operatorname{ri}\Pi,\qquad Z^0=0,\qquad s\in(0,2),\qquad
\rho\geq\rho_0(C,a,b,X^0,s,h),
\end{equation}
where $\rho$ is fixed, $h>0$ is fixed below the thresholds of
Lemmas~\ref{prop:ot-entropy-identity} and~\ref{lem:ot-supported-estimates},
and $\rho_0$ is given by Proposition~\ref{prop:ot-feasibility-sum}.
Then~\eqref{eq:ot-captured-potential} and~\eqref{eq:ot-feasibility-summable}
both hold.

\begin{proposition}[Summability and convergence of the iterates]\label{lem:ot-primal-convergence}
The sequence $\{(X^k,Y^k,Z^k)\}$ generated by Algorithm~\ref{alg:ot} under
\eqref{eq:ot-convergence-regime} satisfies
\begin{equation}\label{eq:ot-summable-residuals}
\sum_{k=1}^{\infty}\big(\|X^k-P_*\|_F+\|Y^k-P_*\|_F
 +\|z^{k+1}-z^k\|_b\big)<\infty.
\end{equation}
Consequently, $X^k,Y^k\to P_*$ and $z^k$ converges to a vector $z_\infty$.
\end{proposition}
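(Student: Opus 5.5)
The plan is to sum the tail bound~\eqref{eq:ot-supported-tail} of Lemma~\ref{lem:ot-supported-estimates} over $k\geq N$, using the summability of $\tau(Y^k)$ from Proposition~\ref{prop:ot-feasibility-sum}. Under~\eqref{eq:ot-convergence-regime}, the potential bound~\eqref{eq:ot-captured-potential} holds with $h\leq h_1$, so~\eqref{eq:ot-supported-tail} is valid for every $k\geq N$. Summing over $k=N,\dots,K$ gives
\begin{equation*}
\sum_{k=N}^{K}\Big(\|X^k-P_*\|_F+\|Y^k-P_*\|_F+\|\log(e_n+c_X^k)\|_b\Big)
\leq\frac{K_1}{1-q}+K_2\sum_{k=N}^{K}\sum_{j=N}^{k}q^{k-j}\tau(Y^j).
\end{equation*}
Interchanging the order of summation bounds the double sum by $(1-q)^{-1}\sum_{j\geq N}\tau(Y^j)$. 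This is finite by~\eqref{eq:ot-feasibility-summable}. Letting $K\to\infty$ bounds the tail series uniformly.

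The finitely many indices $1\leq k<N$ contribute a finite amount, since every iterate is a finite positive matrix. This yields summability of $\|X^k-P_*\|_F+\|Y^k-P_*\|_F$.

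For the multiplier term, I will use~\eqref{eq:ot-residual-update}, namely $z^{k+1}-z^k=-s\log(e_n+c_X^k)$. This gives $\|z^{k+1}-z^k\|_b=s\|\log(e_n+c_X^k)\|_b$, which is summable by the display above. This proves~\eqref{eq:ot-summable-residuals}.

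The consequences then follow directly. The summands tend to zero, so $X^k\to P_*$ and $Y^k\to P_*$. The sequence $\{z^k\}$ has finite length in the weighted norm $\|\cdot\|_b$, which is equivalent to the Euclidean norm since $b>0$. Hence $\{z^k\}$ is Cauchy and converges to some $z_\infty$.

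I do not expect a real obstacle. All the analytic work is contained in Lemma~\ref{lem:ot-supported-estimates} and Proposition~\ref{prop:ot-feasibility-sum}. The only points needing care are that the constants $K_1,K_2,q$ do not depend on $k$ (they may depend on the fixed $\rho$ only through $N$), and that the convolution sum is handled by Fubini for nonnegative terms.
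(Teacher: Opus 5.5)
Your proposal is correct and matches the paper's proof essentially step for step. Like the paper, you sum the tail bound \eqref{eq:ot-supported-tail}, interchange the order of summation in the nonnegative convolution term, invoke $\sum_j\tau(Y^j)<\infty$ from Proposition~\ref{prop:ot-feasibility-sum}, convert the logarithmic term into the multiplier increments via \eqref{eq:ot-residual-update}, add the finite initial segment, and conclude by a Cauchy argument for $z^k$.
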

\begin{proof}
We sum~\eqref{eq:ot-supported-tail} and then use the multiplier update.
Under~\eqref{eq:ot-convergence-regime}, Proposition~\ref{prop:ot-feasibility-sum}
gives $\sum_{j=N}^\infty\tau(Y^j)<\infty$.
All terms in the summation in~\eqref{eq:ot-supported-tail} are
nonnegative, so changing the order of summation below gives
\begin{equation*}
\sum_{k=N}^{\infty}\sum_{j=N}^kq^{k-j}\tau(Y^j)
=\sum_{j=N}^{\infty}\tau(Y^j)\sum_{k=j}^\infty q^{k-j}
=\frac1{1-q}\sum_{j=N}^{\infty}\tau(Y^j)<\infty.
\end{equation*}
The geometric term in~\eqref{eq:ot-supported-tail} is also summable.
Summing that estimate and substituting
$\|z^{k+1}-z^k\|_b=s\|\log(e_n+c_X^k)\|_b$ from
\eqref{eq:ot-residual-update} proves~\eqref{eq:ot-summable-residuals},
after including the finite initial segment.
The primal errors consequently tend to zero.
Moreover, for every $\ell>k$,
\begin{equation*}
\|z^\ell-z^k\|_b
\leq\sum_{j=k}^{\ell-1}\|z^{j+1}-z^j\|_b
\leq\sum_{j=k}^\infty\|z^{j+1}-z^j\|_b\longrightarrow0
\qquad(k\to\infty).
\end{equation*}
The last bound is independent of $\ell$.
Thus $\{z^k\}$ is a Cauchy sequence and has a limit $z_\infty$.
\end{proof}

The following proposition identifies the dual limit and completes the
finite-length result in Theorem~\ref{thm:ot-global}.

\begin{proposition}[KKT conditions at the limit and finite length]\label{lemma:ot-feasibility-kkt}
The sequence $\{(X^k,Y^k,Z^k)\}$ generated by Algorithm~\ref{alg:ot}
under~\eqref{eq:ot-convergence-regime}
converges to a KKT solution $(P_*,P_*,Z_\infty)$ of~\eqref{eq:ot-split}.
The sequence has finite length, i.e.,
$\sum_{k=1}^{\infty}\big(\|X^{k+1}-X^k\|_F+\|Y^{k+1}-Y^k\|_F
+\|Z^{k+1}-Z^k\|_F\big)<\infty$.
\end{proposition}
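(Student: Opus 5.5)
We start from Proposition~\ref{lem:ot-primal-convergence}, which already gives $X^k,Y^k\to P_*$ and $z^k\to z_\infty$. Setting $Z_\infty\coleq-\rho e_m z_\infty^\top$, Proposition~\ref{prop:ot-equivalent-updates}(ii) gives $Z^k=-\rho e_m(z^k)^\top\to Z_\infty$. Hence the whole sequence converges to $(P_*,P_*,Z_\infty)$. Finite length then follows from the triangle inequality. We have $\|X^{k+1}-X^k\|_F\leq\|X^{k+1}-P_*\|_F+\|X^k-P_*\|_F$, and similarly for $Y$. For the multiplier, $\|Z^{k+1}-Z^k\|_F=\rho\sqrt m\,\|z^{k+1}-z^k\|\leq\rho\sqrt m(\min_jb_j)^{-1/2}\|z^{k+1}-z^k\|_b$. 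Each bound is summable by \eqref{eq:ot-summable-residuals}, since $\rho$ is fixed.

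Next we verify that the limit is a KKT point of \eqref{eq:ot-split}. Primal feasibility holds because $X^k\in\cX$, $Y^k\in\cY$, and both sequences converge to the same $P_*$; thus $P_*\in\cX\cap\cY=\Pi$ and $X-Y=0$ holds in the limit. For stationarity, the KKT conditions of \eqref{eq:ot-split} read: there exist $\alpha,\beta$ with $\Gamma_\infty\coleq C+Z_\infty-\alpha e_n^\top\geq0$ on the $X$-block, $-Z_\infty-e_m\beta^\top\geq0$ on the $Y$-block, and complementarity with $P_*$. We use the form $C+Z_\infty=C-\rho e_mz_\infty^\top$, so the natural candidate OT dual is $\beta=-\rho z_\infty$, which makes the $Y$-block slack identically zero. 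The $X$-block then reduces to the OT dual condition $C-\alpha e_n^\top-e_m\beta_\infty^\top\geq0$ with $\beta_\infty\coleq\rho z_\infty$, together with complementarity on $\operatorname{supp}P_*=S$. It therefore suffices to produce $\alpha_\infty$ such that $\Gamma_\infty\coleq C-\alpha_\infty e_n^\top-e_m\beta_\infty^\top$ is nonnegative everywhere and vanishes on $S$.

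To obtain $\alpha_\infty$, we take logarithms in the row update, as in \eqref{eq-tmp1}:
\[
\rho(\log X^k-\log Y^k)=-C+\rho e_m(z^k)^\top-r^ke_n^\top .
\]
On $S$, both $X^k$ and $Y^k$ converge to $P_*>0$, so the left side tends to zero there. Since every row $i$ contains some $(i,j)\in S$ (because $a_i>0$), the relation $r^k_i=-C_{ij}+\rho z^k_j-\rho(\log X^k_{ij}-\log Y^k_{ij})$ shows that $r^k$ converges to some $r_\infty$. Passing to the limit on $S$ gives $C_{ij}+r_{\infty,i}-\rho z_{\infty,j}=0$. We therefore define $\alpha_\infty\coleq-r_\infty$, so that $\Gamma_\infty=0$ on $S$.

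The main obstacle is the sign $\Gamma_\infty\geq0$ off $S$, where the iterates tend to zero and the logarithms are singular. On $S^c$ the same identity gives $\rho\log(X^k_{ij}/Y^k_{ij})\to-(\Gamma_\infty)_{ij}$, so the ratio $X^k_{ij}/Y^k_{ij}$ converges to $e^{-(\Gamma_\infty)_{ij}/\rho}$. We then argue by contradiction: suppose $(\Gamma_\infty)_{ij}<0$ for some $(i,j)\in S^c$. Then the ratio is eventually at least some $\lambda>1$. The column update gives $Y^{k+1}_{ij}=X^k_{ij}/(1+c^k_{X,j})$, and $c_X^k\to0$ because $X^k\to P_*\in\Pi$. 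Hence $Y^{k+1}_{ij}\geq\lambda' Y^k_{ij}$ eventually for some $\lambda'>1$. Because $Y^k_{ij}>0$, this forces $Y^k_{ij}\to\infty$, contradicting $Y^k_{ij}\to0$. Thus $\Gamma_\infty\geq0$, and since $P_*\odot\Gamma_\infty=0$, the point $(P_*,\alpha_\infty,\beta_\infty)$ is a KKT solution of \eqref{eq:gw}. Translating back, $(P_*,P_*,Z_\infty)$ is a KKT solution of \eqref{eq:ot-split}.
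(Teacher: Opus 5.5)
Your argument is correct and follows essentially the same route as the paper. The common steps are finite length via the triangle inequality and \eqref{eq:ot-summable-residuals}, convergence of the row multipliers $r^k$, and $C_{ij}=(\alpha_\infty)_i+(\beta_\infty)_j$ on $S$. You read $r^k$ off a supported entry in each row, whereas the paper uses the explicit normalization formula. The last step is the contradiction from $Y^{k+1}_{ij}/Y^k_{ij}\to e^{-(\Gamma_\infty)_{ij}/\rho}>1$ off $S$, which you obtain by combining the row and column updates just as \eqref{eq:ot-log-telescope} does.

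The only slip is the phrase ``$\beta=-\rho z_\infty$'': for the $Y$-block slack $-Z_\infty-e_m\beta^\top$ to vanish you need $\beta=\rho z_\infty$, which is what you then use as $\beta_\infty$.
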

\begin{proof}
Write $P\coleq P_*$. Proposition~\ref{lem:ot-primal-convergence} gives
$X^k,Y^k\to P$ and $z^k\to z_\infty$.

\textup{(i)} We first identify the limits of the subproblem multipliers.
Each row of $P$ has a positive entry because $Pe_n=a>0$.
Thus every component of $(P\odot e^{-C/\rho})e^{z_\infty}$ is
positive. Exponentiating the row condition in~\eqref{eq:sub-kkt} and using
$X^ke_n=a$ gives
$e^{-r^k/\rho}\odot((Y^k\odot e^{-C/\rho})e^{z^k})=a$.
Solving for $r^k$ and taking limits yields
\begin{equation}\label{eq-alpha}
\begin{gathered}
r^k=\rho\log\!\left(((Y^k\odot e^{-C/\rho})e^{z^k})\oslash a\right)
\longrightarrow-\alpha_\infty,\\
\alpha_\infty\coleq\rho\log\!\left(a\oslash((P\odot e^{-C/\rho})e^{z_\infty})\right).
\end{gathered}
\end{equation}
Set $\beta_\infty\coleq\rho z_\infty$.
Since $X^k\to P\in\Pi$, the column residual $c_X^k$ tends to zero.
The column optimality condition in~\eqref{eq:sub-kkt} therefore gives
$c^k=\rho\log(e_n+c_X^k)-\rho z^k\to-\beta_\infty$.

\textup{(ii)} We next verify that these limits form an OT dual optimal solution.
For $(i,j)\in S$, positivity of $P_{ij}$ gives
$X^k_{ij}/Y^k_{ij}\to1$.
Substituting the limits of $r^k,z^k$ into the row optimality
condition gives $C_{ij}=(\alpha_\infty)_i+(\beta_\infty)_j$ on $S$.
For every entry, substituting the limits of $r^k,c^k$ into
\eqref{eq:ot-log-telescope} gives
\begin{equation}\label{eq:ot-actual-ratio}
\frac{Y^{k+1}_{ij}}{Y^k_{ij}}
\longrightarrow e^{-(\Gamma_\infty)_{ij}/\rho},
\qquad \Gamma_\infty\coleq C-\alpha_\infty e_n^\top-e_m\beta_\infty^\top.
\end{equation}
If $(\Gamma_\infty)_{ij}<0$ outside $S$, the limiting ratio exceeds
one. There would then be $\epsilon>0$ and an index $K$ such that
$Y^{k+1}_{ij}\geq(1+\epsilon)Y^k_{ij}$ for all $k\geq K$.
Iterating gives $Y^k_{ij}\geq(1+\epsilon)^{k-K}Y^K_{ij}$.
Since $Y^K_{ij}>0$, this contradicts $Y^k_{ij}\to P_{ij}=0$.
Consequently $\Gamma_\infty\geq0$ and $P\odot\Gamma_\infty=0$.
These are the KKT conditions for the OT problem \eqref{eq:gw}, and
$Z_\infty=-\rho e_mz_\infty^\top=-e_m\beta_\infty^\top$
gives the corresponding KKT solution of~\eqref{eq:ot-split}.

\textup{(iii)} Finally, we prove that the full sequence has finite length.
The summable primal errors bound the successive differences: for example,
$\|X^{k+1}-X^k\|_F\leq\|X^{k+1}-P\|_F+\|X^k-P\|_F$,
and the same inequality holds for $Y$.
The multiplier relation in Proposition~\ref{prop:ot-equivalent-updates}(ii) gives
\begin{equation*}
\|Z^{k+1}-Z^k\|_F
=\rho\sqrt m\,\|z^{k+1}-z^k\|
\leq\rho\sqrt{\frac{m}{\min_j b_j}}\,\|z^{k+1}-z^k\|_b.
\end{equation*}
Since $\rho$ is fixed, summing these bounds and applying
\eqref{eq:ot-summable-residuals} proves the finite length result.
\end{proof}

\subsection{Strict complementarity and linear convergence}\label{sec:ot-strict-linear}
The limiting dual solution need not coincide with the strictly
complementary dual optimal solution used in the descent estimate. We next prove that its own slack is
positive outside $S$. This will make $\tau(Y^k)$ decay geometrically,
so the same residual estimate~\eqref{eq:ot-supported-tail} gives a linear rate.

\begin{lemma}[Strict complementarity of the limit]\label{lem:ot-selected-strict}
For Algorithm~\ref{alg:ot} under~\eqref{eq:ot-convergence-regime}, the limiting slack
$\Gamma_\infty$ in~\eqref{eq:ot-actual-ratio} is positive on $S^c$.
\end{lemma}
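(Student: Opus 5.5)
The plan is to argue by contradiction. Suppose $(\Gamma_\infty)_{i_0j_0}=0$ for some $(i_0,j_0)\in S^c$. Proposition~\ref{lemma:ot-feasibility-kkt} already gives $\Gamma_\infty\geq0$ and $\Gamma_\infty=0$ on $S$. The ratio limit~\eqref{eq:ot-actual-ratio} then only tells us $Y^{k+1}_{i_0j_0}/Y^k_{i_0j_0}\to1$, and that alone does not contradict $Y^k_{i_0j_0}\to0$. I would therefore not work with ratios but with the exact scaling representation~\eqref{eq:ot-scaling-invariant}. With $Z^0=0$ and $Y^1=X^0$, it reads
\begin{equation*}
\log Y^{K}=\log X^0-\tfrac{K-1}{\rho}C+w^Ke_n^\top+e_m(v^K)^\top,
\end{equation*}
where the vectors $w^K,v^K$ collect the accumulated row and column multipliers. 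Writing $C=\Gamma_\infty+\alpha_\infty e_n^\top+e_m\beta_\infty^\top$ and absorbing the drift terms, we get $\log Y^K=\log X^0-\frac{K-1}{\rho}\Gamma_\infty+\widetilde w^Ke_n^\top+e_m(\widetilde v^K)^\top$ for shifted vectors $\widetilde w^K,\widetilde v^K$.

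\textbf{Step 1: normalize the scalings on $S$.} On $S$, both $\Gamma_\infty=0$ and $Y^K\to P_*>0$ hold, so $\widetilde w^K_i+\widetilde v^K_j$ converges for every $(i,j)\in S$. Following paths in the bipartite support graph of $P_*$ (Definition~\ref{def:ot-support-graph}), this forces $\widetilde w^K_i=\theta^K_d+O(1)$ for $i\in I_d$ and $\widetilde v^K_j=-\theta^K_d+O(1)$ for $j\in J_d$, with scalar drifts $\theta^K_d$. Hence, for $i\in I_d$ and $j\in J_e$,
\begin{equation*}
\log Y^K_{ij}=\log X^0_{ij}-\tfrac{K-1}{\rho}(\Gamma_\infty)_{ij}+\theta^K_d-\theta^K_e+O(1).
\end{equation*}
If $i_0,j_0$ lie in the same component ($d=e$), the zero-slack entry $Y^K_{i_0j_0}$ stays bounded away from zero. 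This contradicts $Y^K_{i_0j_0}\to P_{*,i_0j_0}=0$ from Proposition~\ref{lem:ot-primal-convergence}. So only the case $d\neq e$ remains.

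\textbf{Step 2: mass balance between components.} Here I would use the flow identity from part (iii) of the proof of Lemma~\ref{lem:ot-supported-estimates}:
\begin{equation*}
\sum_{i\in I_d}a_ir^K_{Y,i}=\sum_{i\in I_d,\,j\notin J_d}Y^K_{ij}-\sum_{i\notin I_d,\,j\in J_d}Y^K_{ij}.
\end{equation*}
It says that outflow minus inflow of each component equals a weighted row residual. Choose a component $d$ maximizing $\theta^K_d$ along a subsequence; the number of components is finite, so we may fix the ordering of the $\theta^K_d$ along it. An entry leaving $d$ with zero slack has size $\gtrsim e^{\theta^K_d-\theta^K_e}\geq c>0$, which again contradicts $\tau(Y^K)\to0$. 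So every zero-slack cross entry must leave a component that is not maximal.

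For a general component, the outflow terms with zero slack are of order $e^{\theta_d-\theta_e}$ with $\theta_d>\theta_e$. The inflow terms of the same order come with factors $e^{-(K-1)\Gamma_\infty/\rho}$ or $e^{\theta_{e'}-\theta_d}$. I would peel components in decreasing order of $\theta$ and show that, at each stage, the zero-slack outflow must be dominated by inflow plus $|\sum_{I_d}a_ir_Y^K|$. The residual is controlled through Lemma~\ref{lem:ot-supported-estimates} and the summability of $\tau(Y^k)$. The aim is to derive that $\theta_d-\theta_e$ must diverge to $-\infty$ at a rate incompatible with the ordering, and hence a contradiction.

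\textbf{Main obstacle.} The hardest step is this comparison between outflow and inflow across components. The residual $r_Y^K$ and the cross entries decay at comparable rates, so crude bounds do not separate them. If the peeling argument stalls, I would fall back on the identification of $P_*$ as the Bregman projection~\eqref{eq:ot-selected}. The idea would be to compare $\Gamma_\infty$ with the strictly complementary $\Gamma$ from~\eqref{eq:ot-strict-certificate}, whose difference is $-(t_d-t_e)$ on the block $(I_d,J_e)$, and to use the descent inequality~\eqref{eq:ot-entropy-descent} with this $\Gamma$ to rule out a sign change of the block constants $t_d-t_e$.
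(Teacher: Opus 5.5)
\textbf{There is a genuine gap: Step 2, the cross-component case, is not proved.} Your Step 1 is correct. In fact it needs no iterates: any two dual optimal solutions satisfy $\alpha_i+\beta_j=C_{ij}$ on $S=\operatorname{supp}P_*$, so they differ by a shift $(t_d,-t_d)$ on each component $(I_d,J_d)$. Hence $\Gamma_\infty$ coincides on $I_d\times J_d$ with the Goldman--Tucker slack of \eqref{eq:ot-strict-certificate}, and is therefore positive on $S^c$ there. The whole content of the lemma is the cross-component case.

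The flow identity you invoke uses only $(Y^K)^\top e_m=b$ and Lemma~\ref{lem:ot-marginal-sums}. It holds for every positive matrix with column sums $b$, and it carries no information about the algorithm. The weighted row residual $\sum_{I_d}a_ir_{Y,i}^K$ equals outflow minus inflow exactly. So the statement ``zero-slack outflow is dominated by inflow plus $|\sum_{I_d}a_ir_{Y,i}^K|$'' is a tautology, and no ordering or peeling of the drifts $\theta_d^K$ can extract a contradiction from it. What you need is a dynamic estimate of how fast the iteration can remove mass from zero-slack off-support entries, confronted with $\sum_k\tau(Y^k)<\infty$.

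Your fallback does not supply this either. Inequality \eqref{eq:ot-entropy-descent}, for the Goldman--Tucker slack or any other dual solution, bounds the per-step decrease of the potential from below. It yields only summability facts such as $\sum_k\langle\Gamma,X^k\rangle<\infty$, which you already have. The contradiction requires an upper bound on that decrease.

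\textbf{How the paper closes the gap.} It treats all off-support entries at once, with no component bookkeeping. Set $S_0:=\{(i,j)\in S^c:(\Gamma_\infty)_{ij}=0\}$ and $\tau_k^0:=\sum_{S_0}Y^k_{ij}$. Use the potential centered at the actual limit, $\mathcal E_k:=\mathcal E_{P_*}(Y^k,z^k;z_\infty)$, so that $\mathcal E_k\to0$. The steps are:
\begin{enumerate}[leftmargin=*]
\item By \eqref{eq:ot-actual-ratio}, entries on $S_0$ have ratios tending to one, so $\tau_{k+1}^0/\tau_k^0\to1$. Entries with positive limiting slack decay geometrically, so their total mass is $o((\tau_k^0)^2)$.
\item The subgeometric decay of $\tau_k^0$ turns the convolution bound of Lemma~\ref{lem:ot-supported-estimates} into $O(\tau_k^0)$ bounds for the primal errors and for $\log(e_n+c_X^k)$.
\item Lower bound: $\mathcal E_k\geq D_H(P_*;Y^k)\geq-\log\|Y^k|_S\|_1\geq\tau_k^0$.
\item Upper bound on the decrease: evaluate the exact identity \eqref{eq:ot-radial-identity} at $(P_*,\alpha_\infty,\beta_\infty)$ and bound each term, using the estimates above. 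This gives $\mathcal E_k-\mathcal E_{k+1}\leq L\tau_k^0\mathcal E_k$.
\item By \eqref{eq:ot-feasibility-summable}, $\sum_k\tau_k^0<\infty$. Hence $\prod_k(1-L\tau_k^0)>0$, so $\mathcal E_k$ stays bounded away from zero, contradicting $\mathcal E_k\to0$.
\end{enumerate}

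If you want to keep your drift formulation, this is the relation you would have to establish. You need a quantity that tends to zero, dominates the zero-slack off-support mass, and has per-step relative decrease only $O(\tau_k^0)$. Because these relative rates are summable, such a quantity cannot tend to zero.
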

\begin{proof}
 Write $P\coleq P_*$ and
$\mathcal E_k\coleq\mathcal E_P(Y^k,z^k;z_\infty)$.
Proposition~\ref{lemma:ot-feasibility-kkt} gives
$(Y^k,z^k)\to(P,z_\infty)$; since $P$ is positive on $S$,
$D_H(P;Y^k)\to0$ and hence $\mathcal E_k\to0$.

We argue by contradiction.
Suppose $S_0\coleq\{(i,j)\in S^c:(\Gamma_\infty)_{ij}=0\}$ is nonempty.
Set $S_+\coleq\{(i,j):(\Gamma_\infty)_{ij}>0\}$,
$\tau_k^0\coleq\sum_{S_0}Y^k_{ij}$ and
$\tau_k^+\coleq\sum_{S_+}Y^k_{ij}$.

\textup{(i)} We first compare the entries with zero and positive limiting slack.
On $S_0$, every ratio in~\eqref{eq:ot-actual-ratio} tends to one.
The ratio $\tau_{k+1}^0/\tau_k^0$ lies between the smallest and
largest of these entrywise ratios, so it also tends to one.
On the finite set $S_+$, the limiting ratios are strictly below
one. Choose $q_1\in(0,1)$ larger than all of them. Then, for all sufficiently
large $k$, we have $Y^{k+1}_{ij}\leq q_1Y^k_{ij}$ for all $(i,j)\in S_+$.
If $S_+$ is empty, choose any $q_1\in(0,1)$ instead.
Thus
\begin{equation*}
\tau_k^0>0,\qquad \tau_{k+1}^0/\tau_k^0\to1,\qquad
\tau_k^+=\mathcal{O}(q_1^k),\qquad \sum_k\tau_k^0<\infty,
\end{equation*}
where summability follows from~\eqref{eq:ot-feasibility-summable}.
In particular,
\begin{equation*}
\frac{q_1^{k+1}/(\tau_{k+1}^0)^2}{q_1^k/(\tau_k^0)^2}
=q_1\left(\frac{\tau_k^0}{\tau_{k+1}^0}\right)^2\longrightarrow q_1<1.
\end{equation*}
Hence $q_1^k/(\tau_k^0)^2\to0$ and
$\tau_k^+=o((\tau_k^0)^2)$.

\textup{(ii)} We next bound the primal errors and compare the potential with $\tau_k^0$.
Take $q$ from Lemma~\ref{lem:ot-supported-estimates} and fix
$\theta\in(q,1)$.
Enlarge $N$ so that $\tau_{k+1}^0\geq\theta\tau_k^0$ and
$\tau_k^+\leq\tau_k^0$ for every $k\geq N$.
The recurrence~\eqref{eq:ot-supported-forcing} can be started at this
later index, so~\eqref{eq:ot-supported-tail} remains valid there,
with its constants increased if necessary.
For $N\leq j\leq k$, iterating
$\tau_{\ell+1}^0\geq\theta\tau_\ell^0$ from $\ell=j$ to $k-1$ gives
$\tau_j^0\leq\theta^{-(k-j)}\tau_k^0$. Hence
\begin{equation*}
\sum_{j=N}^kq^{k-j}\tau_j^0
\leq\tau_k^0\sum_{j=N}^k(q/\theta)^{k-j}
\leq\frac{\tau_k^0}{1-q/\theta},\qquad
\frac{q^{k-N}}{\tau_k^0}
\leq\frac{(q/\theta)^{k-N}}{\tau_N^0}\longrightarrow0.
\end{equation*}
Since $\tau(Y^j)=\tau_j^0+\tau_j^+\leq2\tau_j^0$ for $j\geq N$,
substitution into~\eqref{eq:ot-supported-tail} yields
\begin{equation}\label{eq:ot-neutral-normal-bound}
\|X^k-P\|_F+\|Y^k-P\|_F+\|\log(e_n+c_X^k)\|_b
=\mathcal{O}(\tau_k^0).
\end{equation}

We also need a lower bound for the potential. Both $P$ and $Y^k$ have entry sum one, and
$\|Y^k|_S\|_1=1-\tau_k^0-\tau_k^+>0$.
The normalized restriction $Y^k|_S/\|Y^k|_S\|_1$ also has entry
sum one, so its relative entropy from $P|_S$ is nonnegative.
Using $-\log(1-u)\geq u$ gives
\begin{equation}\label{eq:ot-neutral-energy-lower}
\mathcal E_k\geq D_H(P;Y^k)
=\sum_{(i,j)\in S} P_{ij}(\log P_{ij} - \log Y^k_{ij})
=D_H\!\left(P|_S;\frac{Y^k|_S}{\|Y^k|_S\|_1}\right)
 -\log\|Y^k|_S\|_1
\geq\tau_k^0>0.
\end{equation}

\textup{(iii)} We now give an upper bound on how much the potential can decrease.
Apply~\eqref{eq:ot-radial-identity} with respect to the actual KKT solution
$(P,\alpha_\infty,\beta_\infty)$, with center
$\beta_\infty/\rho=z_\infty$. We will prove that 
each term of the upper bound for the potential difference is bounded by a constant
times $\tau_k^0\mathcal E_k$.

The row multiplier is given by
$r^k=\rho\log(((Y^k\odot e^{-C/\rho})e^{z^k})\oslash a)$,
as derived from~\eqref{eq:sub-kkt} in
Proposition~\ref{lemma:ot-feasibility-kkt}.
Its logarithm has a positive argument at $(P,z_\infty)$,
so it is continuously differentiable in a neighborhood of this point.
For the fixed $\rho$, its derivatives are bounded on a sufficiently
small closed neighborhood. Together with~\eqref{eq:ot-neutral-normal-bound} and the expression of $\alpha_\infty$ in \eqref{eq-alpha},
this gives
$\|r^k+\alpha_\infty\|=\mathcal{O}(\|z^k-z_\infty\|_b+\tau_k^0)$,
where the constants in this part may depend on the fixed $\rho$.
The row optimality condition~\eqref{eq:sub-kkt} can be written as
\begin{equation*}
\log\frac{X^k_{ij}}{Y^k_{ij}}
=-\frac{(\Gamma_\infty)_{ij}}{\rho}
 +z_j^k-(z_\infty)_j-\frac{r_i^k+(\alpha_\infty)_i}{\rho}.
\end{equation*}
On $S$, the entries of $X^k,Y^k$ tend to the positive entries of
$P$, so their logarithmic ratio is bounded by a constant times
$\|X^k-P\|_F+\|Y^k-P\|_F$.
On $S_0$, the slack term vanishes, and on $S_+$ it is fixed.
Substituting the preceding bounds therefore gives
\begin{equation}\label{eq-tmp5}
\log\frac{X^k_{ij}}{Y^k_{ij}}=
\begin{cases}
\mathcal{O}(\tau_k^0),&(i,j)\in S,\\
\mathcal{O}(\|z^k-z_\infty\|_b+\tau_k^0),&(i,j)\in S_0,\\
\mathcal{O}(1),&(i,j)\in S_+.
\end{cases}
\end{equation}
All logarithmic ratios thus lie in one bounded interval.
On this interval, $0\leq e^tt-e^t+1\leq L_1t^2$.
We also have $\mathcal E_k\geq\tau_k^0$ by
\eqref{eq:ot-neutral-energy-lower},
$\|z^k-z_\infty\|_b^2\leq2s\mathcal E_k$, and
$\tau_k^+=o((\tau_k^0)^2)$.
Applying the scalar bound with $t=\log(X^k_{ij}/Y^k_{ij})$, weighted
by $Y^k_{ij}$, and then using these three inequalities in \eqref{eq-tmp5}
yields
\begin{equation*}
\begin{aligned}
D_H(X^k;Y^k)
&\leq L_1\sum_{ij}Y^k_{ij}\left(\log\frac{X^k_{ij}}{Y^k_{ij}}\right)^2
\;\leq\; L_2\big((\tau_k^0)^2+\tau_k^0\|z^k-z_\infty\|_b^2+\tau_k^+\big)
\leq L_3\tau_k^0\mathcal E_k.
\end{aligned}
\end{equation*}
The bounded logarithmic ratios also bound $X^k_{ij}/Y^k_{ij}$.
Since $\Gamma_\infty$ is zero off $S_+$, it follows that
$\rho^{-1}\langle\Gamma_\infty,X^k\rangle
=\mathcal{O}(\tau_k^+)=\mathcal{O}(\tau_k^0\mathcal E_k)$.

It remains to bound the logarithmic terms in the potential difference.
For $c$ near zero, $|c|\leq2|\log(1+c)|$ and
$|c-\log(1+c)|\leq L_4c^2$.
By~\eqref{eq:ot-neutral-normal-bound}, each component of $c_X^k$
and its logarithm is $\mathcal{O}(\tau_k^0)$.
Also $z^k-z_\infty$ is bounded. These facts give
\begin{equation*}
\|\log(e_n+c_X^k)\|_b^2
+\big|\langle c_X^k-\log(e_n+c_X^k),
 z^k-z_\infty-\log(e_n+c_X^k)\rangle_b\big|
=\mathcal{O}((\tau_k^0)^2)=\mathcal{O}(\tau_k^0\mathcal E_k).
\end{equation*}
Equation~\eqref{eq:ot-entropy-remainder} also gives
$0\leq D_H(X^k;Y^k)-D_H((X^k)^\top e_m;b)\leq D_H(X^k;Y^k)$.
Moving the terms in~\eqref{eq:ot-radial-identity} to the other side
and using $0<s<2$, we therefore obtain
\begin{equation*}
\begin{aligned}
\mathcal E_k-\mathcal E_{k+1}
&\leq\rho^{-1}\langle\Gamma_\infty,X^k\rangle+D_H(X^k;Y^k)
 +(1-s/2)\|\log(e_n+c_X^k)\|_b^2\\
&\quad+\big|\langle c_X^k-\log(e_n+c_X^k),
 z^k-z_\infty-\log(e_n+c_X^k)\rangle_b\big|\\
&\leq L_5\tau_k^0\mathcal E_k
\end{aligned}
\end{equation*}
for some $L_5>0$ and every sufficiently large $k$.
Equivalently,
\begin{equation}\label{eq:ot-neutral-product-bound}
\mathcal E_{k+1}\geq(1-L_5\tau_k^0)\mathcal E_k.
\end{equation}

\textup{(iv)} Finally, we show that the inequality
\eqref{eq:ot-neutral-product-bound}
contradicts the convergence of the potential to zero.
Since $\sum_k\tau_k^0<\infty$, enlarge $N$ so that
$L_5\tau_k^0\leq1/2$ for $k\geq N$.
Iterating~\eqref{eq:ot-neutral-product-bound} and using
$\log(1-u)\geq-2u$ for $0\leq u\leq1/2$, we obtain
\begin{equation*}
\mathcal E_{k+1}\geq\mathcal E_N\prod_{j=N}^k(1-L_5\tau_j^0)
\geq\mathcal E_N\exp\!\left(-2L_5\sum_{j=N}^k\tau_j^0\right).
\end{equation*}
Here $\mathcal E_N>0$ by~\eqref{eq:ot-neutral-energy-lower}, and
$\sum_j\tau_j^0<\infty$. The right-hand side is consequently
bounded away from zero, contradicting $\mathcal E_k\to0$.
Thus $S_0=\varnothing$.
\end{proof}

\begin{proposition}[Linear convergence]\label{lem:ot-linear-length}
Suppose Algorithm~\ref{alg:ot} is run under~\eqref{eq:ot-convergence-regime}.
Then its sequence $\{(X^k,Y^k,Z^k)\}$ converges
$R$-linearly to $(P_*,P_*,Z_\infty)$.
\end{proposition}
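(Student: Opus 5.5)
The plan is to show first that $\tau(Y^k)$ decays geometrically. Then Lemma~\ref{lem:ot-supported-estimates} gives geometric decay of the primal errors and of $\|\log(e_n+c_X^k)\|_b$, and summing the tails gives the rate for $z^k$.

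For the first step, Lemma~\ref{lem:ot-selected-strict} shows that $\Gamma_\infty>0$ on $S^c$, so $S^c\subseteq S_+$. By~\eqref{eq:ot-actual-ratio}, for each $(i,j)\in S^c$ the ratio $Y^{k+1}_{ij}/Y^k_{ij}$ tends to $e^{-(\Gamma_\infty)_{ij}/\rho}<1$. Since $S^c$ is finite, we can fix $q_1\in(0,1)$ exceeding all these limits. Then there is $K_0$ with $Y^{k+1}_{ij}\leq q_1Y^k_{ij}$ on $S^c$ for $k\geq K_0$, and therefore $\tau(Y^k)\leq\tau(Y^{K_0})q_1^{k-K_0}$. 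If $S^c=\varnothing$, then $\tau\equiv0$ and this step is vacuous.

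Next, substitute this bound into~\eqref{eq:ot-supported-tail}. As in part~(ii) of the proof of Lemma~\ref{lem:ot-selected-strict}, the recurrence~\eqref{eq:ot-supported-forcing} may be restarted at a later index $N'\geq\max\{N,K_0\}$, possibly with enlarged constants. Put $\bar q\coleq\max\{q,q_1\}$ and choose any $q_2\in(\bar q,1)$. Then
\[
\sum_{j=N'}^kq^{k-j}\tau(Y^j)\leq\tau(Y^{N'})\sum_{j=N'}^kq^{k-j}q_1^{j-N'}\leq\tau(Y^{N'})(k-N'+1)\bar q^{\,k-N'},
\]
and the right-hand side is $\mathcal O(q_2^k)$. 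Hence $\|X^k-P_*\|_F+\|Y^k-P_*\|_F+\|\log(e_n+c_X^k)\|_b\leq M_1q_2^k$ for large $k$.

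For the dual variable, \eqref{eq:ot-residual-update} gives $\|z^{k+1}-z^k\|_b=s\|\log(e_n+c_X^k)\|_b\leq sM_1q_2^k$. Since $z^k\to z_\infty$ by Proposition~\ref{lem:ot-primal-convergence}, summing the tail gives
\[
\|z^k-z_\infty\|_b\leq\sum_{j\geq k}\|z^{j+1}-z^j\|_b\leq\frac{sM_1}{1-q_2}\,q_2^k.
\]
Finally, $Z^k-Z_\infty=-\rho e_m(z^k-z_\infty)^\top$, so $\|Z^k-Z_\infty\|_F\leq\rho\sqrt{m/\min_jb_j}\,\|z^k-z_\infty\|_b$. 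Here $\rho$ is fixed, so the constant is harmless. Combining the three bounds gives the claimed $R$-linear estimate with rate $q_2$, which is Theorem~\ref{thm:ot-global}(iii).

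The only delicate point is justifying the restart of Lemma~\ref{lem:ot-supported-estimates} at the later index $N'$. The hypothesis~\eqref{eq:ot-captured-potential} holds for all $k\geq N$, so the recurrence~\eqref{eq:ot-supported-forcing} holds from any later index. The constant $K_1$ only needs to bound $\|d^{N'}\|_P$, which is uniformly bounded in the fixed neighborhood.
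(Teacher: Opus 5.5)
Your proposal is correct and follows the paper's proof essentially step for step. It uses geometric decay of $\tau(Y^k)$ from strict complementarity and the ratio limits \eqref{eq:ot-actual-ratio}, substitutes this into \eqref{eq:ot-supported-tail}, takes a tail sum for $z^k$, and uses the relation $Z^k=-\rho e_m(z^k)^\top$. The only differences are cosmetic: the paper avoids the restart by absorbing the finitely many early terms into the constant in $\tau(Y^k)=\mathcal O(q_1^k)$, and avoids your polynomial factor by writing the sum as $\zeta^k$ times a bounded geometric series with $\zeta\in(\max\{q,q_1\},1)$.
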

\begin{proof}
We first obtain a geometric bound for the entries outside $S$.
If $S^c\ne\varnothing$, Lemma~\ref{lem:ot-selected-strict} makes
all limiting ratios in~\eqref{eq:ot-actual-ratio} strictly less
than one there. Choose $q_1\in(0,1)$ larger than their maximum.
Iterating the eventual entrywise ratio bounds gives
$\tau(Y^k)=\mathcal{O}(q_1^k)$. If $S^c=\varnothing$, this estimate holds
with any $q_1\in(0,1)$ because $\tau(Y^k)=0$.

We use this bound in~\eqref{eq:ot-supported-tail} to estimate the primal errors.
For $\zeta\in(\max\{q,q_1\},1)$, we have
\begin{equation*}
\sum_{j=N}^k q^{k-j}q_1^j
=\zeta^k\sum_{j=N}^k(q/\zeta)^{k-j}(q_1/\zeta)^j
\leq\frac{\zeta^k}{1-q/\zeta}.
\end{equation*}
The initial term $q^{k-N}$ is also $\mathcal{O}(\zeta^k)$, so
\eqref{eq:ot-supported-tail} gives
\begin{equation*}
\|X^k-P_*\|_F+\|Y^k-P_*\|_F+\|\log(e_n+c_X^k)\|_b=\mathcal{O}(\zeta^k).
\end{equation*}
Finally, $z^k\to z_\infty$ and~\eqref{eq:ot-residual-update}
express its error as a tail of the summable increments. Hence
\begin{equation*}
\|z^k-z_\infty\|_b
\leq s\sum_{j=k}^{\infty}\|\log(e_n+c_X^j)\|_b
=\mathcal{O}\!\left(\sum_{j=k}^{\infty}\zeta^j\right)=\mathcal{O}(\zeta^k).
\end{equation*}
The exact matrix multiplier relation also gives
\begin{equation*}
\|Z^k-Z_\infty\|_F
=\rho\sqrt m\,\|z^k-z_\infty\|
\leq\rho\sqrt{\frac{m}{\min_jb_j}}\,\|z^k-z_\infty\|_b=\mathcal{O}(\zeta^k).
\end{equation*}
Together with the primal bounds, this proves $R$-linear convergence
of the sequence.
\end{proof}

\begin{proof}[{\bf Proof of Theorem~\ref{thm:ot-global}}]
Choose $h$ below the thresholds in Lemmas~\ref{prop:ot-entropy-identity}
and~\ref{lem:ot-supported-estimates}, then choose $\rho_0$
as in Proposition~\ref{prop:ot-feasibility-sum}.
For every fixed $\rho\geq\rho_0$,
Proposition~\ref{lemma:ot-feasibility-kkt} proves finite length
in~\textup{(i)} and KKT convergence in~\textup{(ii)}.
Lemma~\ref{lem:ot-selected-strict} establishes strict complementarity,
completing~\textup{(ii)}.
Proposition~\ref{lem:ot-linear-length} proves~\textup{(iii)}.
\end{proof}

\section{Efficient implementations of BADMM}\label{sec:ot-implementation}
We reduce the computation time and memory required by BADMM
by updating vectors instead of matrices and using grid structure
to compute matrix-vector products.

\subsection{Implementation using scaling vectors}\label{sec:ot-fast-computation}
We implement Algorithm~\ref{alg:ot} using three vectors
$u^k,v^k,z^k$, without storing $X^k,Y^k,Z^k$ explicitly.
We assume $X^0>0$, $X^0e_n=a$ and $Z^0=0$, with fixed $\rho>0$.
Define the kernel matrix $G_\tau\coleq X^0\odot e^{-\tau C}$.
The positive vectors $u^k\in\R^m$ and $v^k\in\R^n$ multiply
the rows and columns of $G_{(k-1)/\rho}$, respectively, to represent
$Y^k$, following the notation in~\cite[Section~4.2]{yang2022bregman}.
The vector $z^k\in\R^n$ determines $Z^k$ as in~\eqref{eq:ot-recovery-z}.
For general $X^0$ and $C$, storing the input matrices and computing a
matrix--vector product still require $\mathcal{O}(mn)$ memory and work,
respectively.
Since $Y^1=\mathcal C_b(X^0)$, the initial factors are
$u^1=e_m$ and $v^1=b\oslash((X^0)^\top e_m)$.
The first multiplier update gives $z^1=s\log v^1$.
If $X^0\in\Pi$, these simplify to $v^1=e_n$ and $z^1=0$.
For $k\geq1$, compute
\begin{subequations}\label{eq:ot-factor-updates}
\begin{align}
u^{k+1}&=a\oslash\big(G_{k/\rho}(v^k\odot e^{z^k})\big),\label{eq:ot-factor-update-u}\\
v^{k+1}&=b\oslash\big(G_{k/\rho}^\top u^{k+1}\big),\label{eq:ot-factor-update-v}\\
z^{k+1}&=(1-s)z^k+s\log(v^{k+1}\oslash v^k).\label{eq:ot-factor-update-z}
\end{align}
\end{subequations}
The matrices in Algorithm~\ref{alg:ot} can be computed from
these vectors by
{
\begin{subequations}\label{eq:ot-row-factorization}
\begin{align}
X^k&=\Diag(u^{k+1})G_{k/\rho}\Diag(v^k\odot e^{z^k}),\label{eq:ot-recovery-x}\\
Y^k&=\Diag(u^k)G_{(k-1)/\rho}\Diag(v^k),\label{eq:ot-recovery-y}\\
Z^k&=-\rho e_m(z^k)^\top.\label{eq:ot-recovery-z}
\end{align}
\end{subequations}}
To check the stopping criterion, we compute the OT dual variables as follows:
\begin{equation}\label{eq:ot-scaling-duals}
\alpha^{k+1}\coleq\rho\log(u^{k+1}\oslash u^k),\qquad
\beta^{k+1}\coleq\rho\log(v^{k+1}\oslash v^k).
\end{equation}
The subproblem optimality conditions in~\eqref{eq:sub-kkt} give
$\alpha^{k+1}=-r^k$ and $\beta^{k+1}=-c^k$.
The stopping test pairs these dual variables with $Y^{k+1}$.
The implementation uses logarithms of the scaling factors for
numerical stability, as described in Section~\ref{sec:ot-kernel-truncation}.

\begin{remark}[Nonzero initial multipliers]
The vector updates also apply when the initial multiplier has the form
$Z^0=-\rho e_m(z^0)^\top$, with fixed $\rho$ and
$z^1=z^0+s\log v^1$.
For an arbitrary matrix $Z^0$, the multiplier update instead gives
$Z^k=(1-s)^kZ^0-\rho e_m(z^k)^\top$ for some $z^k$.
The term $(1-s)^kZ^0$ must then be included in the matrix-vector products,
so~\eqref{eq:ot-factor-updates} cannot be used without modification.
\end{remark}

\subsection{Structured matrix-vector products and stopping tests on grids}\label{sec:ot-structured-products}
We use the grid structure to speed up matrix-vector products and residual
evaluation without forming full $m\times n$ matrices. Suppose that the
source grid contains all pairs 
$\{(x^{(1)}_{i_1},x^{(2)}_{i_2})\mid 1\leq i_1\leq L_1^s, 1\leq i_2\leq L_2^s\}$
and the target grid contains all pairs $\{(y^{(1)}_{j_1},y^{(2)}_{j_2}) \mid 1\leq j_1\leq L_1^t, 1\leq j_2\leq L_2^t\}$.
The entries of $a$ and $b$
give the weights at the source and target grid points, respectively.
This includes transport between images with squared Euclidean costs.
With $X^0=ab^\top$, suppose that
$C_{(i_1,i_2),(j_1,j_2)}=D^{(1)}_{i_1j_1}+D^{(2)}_{i_2j_2}$,
where $D^{(1)}\in \R^{L_1^s\times L_1^t}$ and $D^{(2)}\in\R^{L_2^s\times L_2^t}$ contain the costs along the two coordinates.
The kernel $G_\tau$ then factors as
\begin{equation}\label{eq:ot-separable-kernel}
G_\tau=\Diag(a)
\big(G^{(2)}_\tau\otimes G^{(1)}_\tau\big)\Diag(b),
\qquad G^{(r)}_\tau\coleq\exp(-\tau D^{(r)}), \quad r=1,2.
\end{equation}
Arrange a vector on the target grid as a matrix $V\in \R^{L_1^t\times L_2^t}$, with the
first coordinate indexing rows and the second indexing columns.
Stacking the columns of $V$ gives $\operatorname{vec}(V)$, and
\begin{equation}\label{eq:ot-kronecker-product}
\big(G^{(2)}_\tau\otimes G^{(1)}_\tau\big)\operatorname{vec}(V)
=\operatorname{vec}\big(G^{(1)}_\tau V(G^{(2)}_\tau)^\top\big).
\end{equation}
Formula~\eqref{eq:ot-kronecker-product} replaces one large
matrix--vector product with two smaller matrix multiplications.
Multiplication by $\Diag(a)$ and $\Diag(b)$ only requires multiplying
vector entries by the weights $a$ and $b$.
Thus for two $L\times L$ grids, each kernel product costs $\mathcal{O}(L^3)$
instead of $\mathcal{O}(L^4)$. 
This method is standard in entropy-regularized
transport~\cite{peyre2019computational}.
Note that for a general $X^0$, the kernel need not have the form
in~\eqref{eq:ot-separable-kernel}.

The formula~\eqref{eq:ot-separable-kernel}
is applicable to squared Euclidean costs,
Manhattan costs, and other sums of coordinate costs. However, 
Euclidean distance without the square does not in general split this way.
If grid points with zero weights have been removed, we fill their positions
with zeros before multiplication and keep only the remaining positions afterward.

To check the stopping condition without forming $Y^k$,
we compute its row and column sums and objective value from the scaling vectors. Write
$Y^k=\Diag(u^k)G_{\tau_k}\Diag(v^k)$, where $\tau_k\coleq(k-1)/\rho$. Then
\begin{equation*}
Y^ke_n=u^k\odot(G_{\tau_k}v^k),\quad
(Y^k)^\top e_m=v^k\odot(G_{\tau_k}^\top u^k),\quad
\langle C,Y^k\rangle=(u^k)^\top(C\odot G_{\tau_k})v^k.
\end{equation*}
For the additive cost above, the weighted kernel satisfies
\begin{equation*}
C\odot G_\tau
=
\Diag(a)\big[(D^{(2)}\odot G^{(2)}_\tau)\otimes G^{(1)}_\tau
+G^{(2)}_\tau\otimes(D^{(1)}\odot G^{(1)}_\tau)\big]\Diag(b).
\end{equation*}
Each term is computed using two matrix multiplications, as
in~\eqref{eq:ot-kronecker-product}. The row and column sums and objective
value therefore require $\mathcal{O}(L^3)$ work and $\mathcal{O}(L^2)$
memory on two $L\times L$ grids. Since $Y^k\geq0$ and
$(Y^k)^\top e_m=b$, the primal feasibility residual
in~\eqref{eq:experiment-residuals} reduces to
$\eta_{\rm p}=\|Y^ke_n-a\|/(1+\|(a,b)\|)$.
Together with the corresponding dual objective
$a^\top\alpha^k+b^\top\beta^k$ from~\eqref{eq:ot-scaling-duals},
these quantities give the primal--dual gap.

For costs that split into two coordinate costs, the dual feasibility
residual in~\eqref{eq:experiment-residuals} is computed using the same
cost decomposition and sorting, without storing the full matrix
$T\coleq C-\alpha e_n^\top-e_m\beta^\top$. For general costs, the stopping
residuals require $\mathcal{O}(mn)$ work but can still be computed without
storing $Y^k$ or $T$.

\subsection{Stable computation of matrix-vector products}\label{sec:ot-kernel-truncation}
As $k$ increases, entries of
$G_{k/\rho}=X^0\odot e^{-kC/\rho}$ can become extremely small,
while $u^k$ and $v^k$ can become large. To avoid forming these values,
we store logarithms of the scaling vectors.
For $X^0=ab^\top$ used in our experiments, we store
$\log(a\odot u^k)$ and $\log(b\odot v^k)$, initialized as
$\log a$ and $\log b$. The fixed weights $a,b$ do not change
the ratios used in~\eqref{eq:ot-scaling-duals}.
The vector $z^k$ is stored directly. Subtracting the same scalar from
all its entries keeps them from becoming large and leaves the primal
iterates and the OT dual slack unchanged.

Matrix-vector products are computed using the log-sum-exp identity.
For $h_j\in\R\cup\{-\infty\}$ with at least one finite entry,
\begin{equation*}
\log\sum_j e^{h_j}
=\max_jh_j+\log\sum_j\exp(h_j-\max_\ell h_\ell).
\end{equation*}
The exponents on the right are nonpositive, so their exponentials
cannot overflow. This computes the logarithm of the sum without
first evaluating the possibly very large or small values $e^{h_j}$.
Logarithmic stabilization of transport scaling is discussed in
\cite[Section~4.4]{peyre2019computational} and
\cite{schmitzer2019stabilized}.

Now suppose that, along each coordinate, the source and target
points use the same equally spaced locations. For squared Euclidean costs
and $\tau>0$, the entries of either $G_\tau^{(1)}$ or $G_\tau^{(2)}$
in~\eqref{eq:ot-separable-kernel} have the form
$\exp(-\gamma(p-q)^2)$. Here $\gamma>0$ equals $\tau$ times the squared
spacing between adjacent points, divided by the factor used to normalize $C$.
For a vector with entries $e^{h_q}$, the $p$-th output of multiplication
by either matrix is $\sum_{q=1}^L\exp(-\gamma(p-q)^2)e^{h_q}$.
Its logarithm is therefore
\begin{equation}\label{eq:ot-log-contraction}
F_p(h)\coleq\log\sum_{q=1}^L\exp\big(h_q-\gamma(p-q)^2\big),\qquad 1\leq p\leq L.
\end{equation}
We use this formula for the row and column multiplications
in~\eqref{eq:ot-kronecker-product}. Zero input entries, including zeros
inserted at removed grid points, are represented by $h_q=-\infty$.
If all input entries are zero, $F_p(h)=-\infty$ for every $p$.
Otherwise, set $M_p\coleq\max_q\{h_q-\gamma(p-q)^2\}$ and apply
the log-sum-exp identity to obtain
\begin{equation*}
F_p(h)=M_p+\log\sum_{q=1}^L
\exp\big(h_q-\gamma(p-q)^2-M_p\big).
\end{equation*}
To compute all the required maxima efficiently, observe that
$-M_p=\min_q\{\gamma(p-q)^2-h_q\}$.
This is the minimization computed by the squared-distance transform
algorithm of Felzenszwalb and Huttenlocher
\cite[Section~2.1]{felzenszwalb2012distance}, which gives all $M_p$
in $\mathcal{O}(L)$ work. We retain all terms in the exponential sums,
as in the reported experiments. The complete kernel product on two
$L\times L$ grids requires $\mathcal{O}(L^3)$ work.

\let\otsectionfiveclarify\otclarify
\renewcommand{\otclarify}[1]{#1}
\section{Numerical experiments}\label{sec:ot-numerical}
We evaluate the computational time, memory requirements and
solution accuracy of BADMM on CPU and GPU platforms. We consider a
generic-cost-matrix setting, in which the methods receive only $(a,b,C)$
or the corresponding linear program, and a two-dimensional-kernel setting
that additionally uses the grid structure described in 
Section \ref{sec:ot-structured-products}. These tests assess the
performance of BADMM and the computational gains from the structured
implementation in Section~\ref{sec:ot-implementation}.

\medskip
\par\noindent\tb{Tested problems.}
We use images 1001 and 1002 from each of the ten DOTmark
classes~\cite{schrieber2016dotmark} at resolutions $L=64,128,256,512$.
The generic-cost-matrix tests use resolutions 64 and 128, with additional
GPU BADMM tests at resolution 256. The two-dimensional-kernel tests extend
to resolution 512. We normalize each image by its total pixel intensity.
As in~\cite{hou2024sparse}, we remove pixels with zero mass. We also
discard normalized entries at or below $10^{-14}$ and renormalize the
remaining entries. Let $a\in\R_{++}^m$ and $b\in\R_{++}^n$ be the
resulting probability vectors, where $m$ and $n$ are the numbers
of retained pixels in the two images.
For their pixel positions $x_i,y_j\in\{1,\ldots,L\}^2$, the
unnormalized transport cost is $\|x_i-y_j\|^2$.
We divide this cost matrix by its maximum entry to obtain $C$.
{\color{black}
Since DOTmark provides images only up to $512\times512$, we use DOMDEC images
\texttt{g-008-2048} and \texttt{g-009-2048} from the DOMDEC examples
repository~\cite{bonafini2021domain} to test larger problems.
We test the original images and the $1024\times1024$ images
obtained by summing pixel intensities within nonoverlapping $2\times2$
blocks. We apply the same normalization and cost scaling as above.
\par}

\medskip
\par\noindent\tb{Compared methods.}
{\color{black}
Methods in the generic-cost-matrix group receive only $(a,b,C)$
or the corresponding linear program. Methods in the two-dimensional-kernel
 group may additionally use grid coordinates and the squared Euclidean
cost structure.
On the CPU, the generic-cost-matrix group contains BADMM,
SqSN~\cite{hou2024sparse}, HPR~\cite{chen2025hprlp},
PDLP~\cite{applegate2025pdlp},
Gurobi's barrier and PDHG methods,
HiGHS-Bar (HiPO)~\cite{zanetti2025hipo}, POT-EMD~\cite{POT},
CPLEX-Net, and CPLEX-Bar. The two-dimensional-kernel group contains
BADMM and HOT~\cite{zhang2025hot}.
We call PDLP through OR-Tools.
The suffixes Bar and Net denote barrier and network simplex methods,
respectively, and PDHG denotes the primal--dual hybrid gradient method.
We do not include IPOT~\cite{xie2020ipot} in the comparison,
as its default settings did not attain the required accuracy
and its convergence was slower than that of the other benchmark methods
in our preliminary tests.

On the GPU, the generic-cost-matrix group contains BADMM,
PDOT~\cite{lu2024pdot}, Wang's BADMM
(denoted by W-BADMM)~\cite{wang2014bregman},
cuPDLPx~\cite{lu2025cupdlpx}, the C implementation
HPR-LP-C of HPR-LP~\cite{chen2025hprlp},
Gurobi-PDHG, and the PDLP and barrier methods of
cuOpt. 
To avoid confusion, we recall that 
W-BADMM uses a linear dual update, whereas the BADMM studied here uses
the nonlinear dual update in Algorithm~\ref{alg:ot}.
The two-dimensional-kernel group contains
BADMM, HOT~\cite{zhang2025hot}, HALO~\cite{xia2026halo}, and HELLO~\cite{xia2026hello}.
The codes HOT, HALO and HELLO used in our experiments
require grid information or point coordinates and are therefore not
directly applicable when only an arbitrary cost matrix is supplied.
\par}

\medskip
\par\noindent\tb{Stopping conditions.}
Let $(P,(\alpha,\beta,\Gamma))$ be a returned approximate primal--dual
solution pair for~\eqref{eq:gw}, where $P$ is the primal matrix, $\alpha$ and $\beta$ are
the multipliers for the marginal constraints, and $\Gamma$ is the dual slack.
Set $T\coleq C-\alpha e_n^\top-e_m\beta^\top$ and
$M_-\coleq\max\{-M,0\}$ for a matrix $M$.
We measure the relative primal infeasibility, relative dual infeasibility
and relative primal--dual gap on the input OT problem by
\begin{equation}\label{eq:experiment-residuals}
\begin{aligned}
 \eta_{\rm p}&\coleq\max\left\{
 \frac{\|(Pe_n-a,P^\top e_m-b)\|}{1+\|(a,b)\|},
 \frac{\|P_-\|_F}{1+\|P\|_F}\right\},\\
 \eta_{\rm d}&\coleq
 \frac{\max\{\|T_-\|_F,\|\Gamma_-\|_F,\|\Gamma-T\|_F\}}
 {1+\|C\|_F},\quad
 \eta_{\rm g}\coleq
 \frac{|\langle C,P\rangle-a^\top\alpha-b^\top\beta|}
 {1+|\langle C,P\rangle|+|a^\top\alpha+b^\top\beta|}.
\end{aligned}
\end{equation}
If a solver does not return a separate dual slack, we set $\Gamma=T$.
We call a returned approximate primal--dual solution accurate if
$\eta_{\max}\coleq\max\{\eta_{\rm p},\eta_{\rm d},\eta_{\rm g}\}
\leq10^{-5}$.
{\color{black}
We independently evaluate every returned solution using the
residuals in~\eqref{eq:experiment-residuals} in double precision (FP64)
on the input OT problem. We use each solver's default stopping criteria
and set the stopping tolerances to $10^{-5}$ where available, with the
exceptions described below. With a primal feasibility tolerance of
$10^{-5}$, HiGHS declared some feasible test problems as infeasible during
presolve. We therefore retain its default tolerance of $10^{-7}$.
The default stopping criteria of HOT and W-BADMM do not always attain
the required accuracy. We additionally require
$\eta_{\max}\leq10^{-5}$ for CPU HOT and W-BADMM. CPU HOT retains its native checking frequency of 100
iterations, and W-BADMM retains its native single-precision updates
while evaluating the additional test in FP64. GPU HOT retains its default stopping criterion.
POT-EMD retains its default stopping test, since its interface
does not expose a stopping tolerance. Its returned solution is evaluated
using the same residual criterion \eqref{eq:experiment-residuals}.
\par}

\medskip
\par\noindent\tb{BADMM implementation and parameters.}\label{sec:ot-adaptive-implementation}
{\color{black}
On both CPU and GPU platforms, the generic-cost-matrix and
two-dimensional-kernel BADMM implementations use the scaling-vector representation
in Section~\ref{sec:ot-fast-computation}. The generic-cost-matrix
implementation evaluates matrix-vector products from the supplied $C$.
The two-dimensional-kernel implementation uses the structured products in
Section~\ref{sec:ot-structured-products} and the stable
evaluation described in Section~\ref{sec:ot-kernel-truncation}.
The reported experiments use stabilized matrix-vector products without truncating small exponential terms.
On each platform, the two implementations use the same
outer iteration, penalty updates and stopping condition, but evaluate
the matrix-vector products differently.
The CPU implementations use FP64.
The GPU implementations evaluate
update exponentials in single precision (FP32). They store the logarithms
of the scaling vectors and the dual variables in FP64, and use FP64
to sum the terms in matrix-vector products and evaluate the stopping residuals.
}
We initialize $X^0=ab^\top$, $Z^0=0$, $\rho_0=0.1$ and $s=1.5$.
{\color{black}
We check the stopping condition at the first iteration, every five
iterations and at termination, and consider penalty updates every five
iterations. At each penalty update, set
}
\begin{equation}\label{eq:ot-adaptive-penalty}
\rho_{k+1}=\begin{cases}
 \min\{0.3,1.2\rho_k\},&\eta_{\rm p}>5\max\{\eta_{\rm d},\eta_{\rm g}\},\\
 \max\{0.003,\rho_k/1.2\},&\eta_{\rm p}<\max\{\eta_{\rm d},\eta_{\rm g}\}/5,\\
 \rho_k,&\text{otherwise}.
\end{cases}
\end{equation}
After a penalty change,
we rescale $z$ by $\rho_k/\rho_{k+1}$ to preserve $Z$.
For numerical stabilization, the two-dimensional-kernel CPU implementation
and both GPU implementations subtract $\max_j z_j$ from every component of
$z$ after each multiplier update.
The convergence results in Section~\ref{sec:ot-convergence}
concern a sufficiently large fixed penalty. In the experiments, we use
the heuristic adaptive penalty rule~\eqref{eq:ot-adaptive-penalty}
for better computational performance, as is common in
ADMM~\cite{boyd2011distributed,he2000alternating,xu2017adaptive}.

\medskip
\par\noindent\tb{Implementation and timings.}
{\color{black}
CPU experiments are run on a machine with an Intel Xeon E5-2680 v3 processor at 2.50 GHz using one
computational thread. We use Julia 1.13.1. The CPU LP packages are Gurobi 13.0.3~\cite{gurobimanual},
CPLEX 22.2.0.1~\cite{ibmcplexmanual}, HiGHS 1.15.1 and OR-Tools 9.15.6755.
POT-EMD uses POT 0.9.7.post1.
CPU experiments also have a 75 GiB memory limit.

GPU experiments are run on a machine with one NVIDIA RTX 6000 Ada Generation GPU with 48 GiB
of memory, on an Intel Xeon w7-2595X host. They use up to eight host
threads and one BLAS thread. GPU Gurobi and cuOpt use versions 13.0.3
and 26.8.0~\cite{cuoptsoftware}, respectively.
Input reading, warm-up and independent residual evaluation
after termination are excluded from the reported times.

For all experiments, we set the time limit to 3600 seconds.
\par}

\medskip
\par\noindent\tb{Table notation.}
{\color{black}
``Acc./Run'' is the number of returned solutions satisfying
$\eta_{\max}\leq10^{-5}$, with the residuals defined
in~\eqref{eq:experiment-residuals}, divided by the number of attempted instances. TL and ML denote a time limit and a memory limit, respectively.
A dagger indicates that $\eta_{\max}$, or its reported mean,
exceeds $10^{-5}$.
CPU memory is the peak sampled resident memory during each run,
including input data and runtime memory. GPU tables separate host
resident memory from device memory. 
For TL and ML, the status is reported in place of a numeric time.
}

\providecommand{\otnum}[3]{\mbox{#1e\makebox[0.65em][c]{\ensuremath{#2}}#3}}
\newcommand{\OTCPUGeneralTable}{%
\begingroup
\begin{footnotesize}
\begin{longtable}[c]{llrrrrr}
\caption{\textcolor{black}{CPU results in the generic-cost-matrix setting at $L=64$.}\label{tab:ot-cpu-general}} \\
\toprule
$L$ & Solver & Time (s) & Memory (GiB) & $\eta_{\max}$ & Acc./Run & TL/ML \\
\midrule
\endfirsthead
\multicolumn{7}{c}{Table \thetable\ continued}\\
\toprule
$L$ & Solver & Time (s) & Memory (GiB) & $\eta_{\max}$ & Acc./Run & TL/ML \\
\midrule
\endhead
\midrule
\multicolumn{7}{r}{Continued on next page}\\
\endfoot
\bottomrule
\endlastfoot
64 & BADMM & 25.023 & 0.635 & \otnum{9.68}{-}{06} & 1/1 & 0/0 \\
64 & POT-EMD & 7.520 & 0.607 & \otnum{1.90}{-}{13} & 1/1 & 0/0 \\
64 & CPLEX-Net & 55.890 & 3.505 & \otnum{1.11}{-}{18} & 1/1 & 0/0 \\
64 & SqSN & 62.766 & 2.808 & \otnum{7.74}{-}{06} & 1/1 & 0/0 \\
64 & Gurobi-Bar & 336.696 & 11.301 & \otnum{4.43}{-}{06} & 1/1 & 0/0 \\
64 & CPLEX-Bar & 655.793\rlap{$^{\dagger}$} & 6.561 & \otnum{2.28}{-}{05} & 0/1 & 0/0 \\
64 & HPR & TL & \textendash & \textendash & 0/1 & 1/0 \\
64 & PDLP & TL & \textendash & \textendash & 0/1 & 1/0 \\
64 & Gurobi-PDHG & TL & \textendash & \textendash & 0/1 & 1/0 \\
64 & HiGHS-Bar & TL & \textendash & \textendash & 0/1 & 1/0 \\
\end{longtable}
\end{footnotesize}
\endgroup}

\newcommand{\OTCPUStructuredTable}{%
\begingroup
\begin{footnotesize}
\begin{longtable}[c]{llrrrrr}
\caption{\textcolor{black}{CPU results in the two-dimensional-kernel setting.}\label{tab:ot-cpu-structured}} \\
\toprule
$L$ & Solver & Time (s) & Memory (GiB) & $\eta_{\max}$ & Acc./Run & TL/ML \\
\midrule
\endfirsthead
\multicolumn{7}{c}{Table \thetable\ continued}\\
\toprule
$L$ & Solver & Time (s) & Memory (GiB) & $\eta_{\max}$ & Acc./Run & TL/ML \\
\midrule
\endhead
\midrule
\multicolumn{7}{r}{Continued on next page}\\
\endfoot
\bottomrule
\endlastfoot
64 & BADMM & 1.328 & 0.610 & \otnum{9.07}{-}{06} & 10/10 & 0/0 \\
64 & HOT & 36.803 & 0.281 & \otnum{7.13}{-}{06} & 10/10 & 0/0 \\[5pt]
128 & BADMM & 9.769 & 0.635 & \otnum{8.73}{-}{06} & 10/10 & 0/0 \\
128 & HOT & 631.115 & 0.669 & \otnum{8.61}{-}{06} & 9/10 & 1/0 \\[5pt]
256 & BADMM & 89.237 & 0.656 & \otnum{9.01}{-}{06} & 10/10 & 0/0 \\
256 & HOT & TL & \textendash & \textendash & 0/10 & 10/0 \\[5pt]
512 & BADMM & 708.348 & 0.716 & \otnum{9.49}{-}{06} & 10/10 & 0/0 \\
512 & HOT & TL & \textendash & \textendash & 0/10 & 10/0 \\
\end{longtable}
\end{footnotesize}
\endgroup}

\newcommand{\OTGPUGeneralTable}{%
\begingroup
\begin{footnotesize}
\begin{longtable}[c]{llrrrrrr}
\caption{\textcolor{black}{GPU results in the generic-cost-matrix setting at $L=64$.}\label{tab:ot-gpu-general}} \\
\toprule
$L$ & Solver & Time (s) & Host (GiB) & GPU (GiB) & $\eta_{\max}$ & Acc./Run & TL/ML \\
\midrule
\endfirsthead
\multicolumn{8}{c}{Table \thetable\ continued}\\
\toprule
$L$ & Solver & Time (s) & Host (GiB) & GPU (GiB) & $\eta_{\max}$ & Acc./Run & TL/ML \\
\midrule
\endhead
\midrule
\multicolumn{8}{r}{Continued on next page}\\
\endfoot
\bottomrule
\endlastfoot
64 & BADMM & 0.127 & 1.828 & 1.241 & \otnum{9.68}{-}{06} & 1/1 & 0/0 \\
64 & PDOT & 6.241 & 2.273 & 25.040 & \otnum{6.39}{-}{06} & 1/1 & 0/0 \\
64 & HPR-LP-C & 27.553\rlap{$^{\dagger}$} & 2.315 & 7.030 & \otnum{6.51}{-}{02} & 0/1 & 0/0 \\
64 & cuOpt-Bar & 35.472\rlap{$^{\dagger}$} & 11.926 & 13.702 & \otnum{2.38}{-}{03} & 0/1 & 0/0 \\
64 & cuPDLPx & 76.231\rlap{$^{\dagger}$} & 5.447 & 3.720 & \otnum{1.66}{-}{05} & 0/1 & 0/0 \\
64 & Gurobi-PDHG & 106.283\rlap{$^{\dagger}$} & 13.738 & 2.888 & \otnum{8.57}{-}{04} & 0/1 & 0/0 \\
64 & cuOpt-PDLP & 188.049\rlap{$^{\dagger}$} & 5.299 & 8.071 & \otnum{1.54}{-}{05} & 0/1 & 0/0 \\
64 & W-BADMM & TL & \textendash & \textendash & \textendash & 0/1 & 1/0 \\
\end{longtable}
\end{footnotesize}
\endgroup}

\newcommand{\OTGPUStructuredTable}{%
\begingroup
\begin{footnotesize}
\begin{longtable}[c]{llrrrrrr}
\caption{\textcolor{black}{GPU results in the two-dimensional-kernel setting.}\label{tab:ot-gpu-structured}} \\
\toprule
$L$ & Solver & Time (s) & Host (GiB) & GPU (GiB) & $\eta_{\max}$ & Acc./Run & TL/ML \\
\midrule
\endfirsthead
\multicolumn{8}{c}{Table \thetable\ continued}\\
\toprule
$L$ & Solver & Time (s) & Host (GiB) & GPU (GiB) & $\eta_{\max}$ & Acc./Run & TL/ML \\
\midrule
\endhead
\midrule
\multicolumn{8}{r}{Continued on next page}\\
\endfoot
\bottomrule
\endlastfoot
256 & BADMM & 0.379 & 1.864 & 1.145 & \otnum{9.01}{-}{06} & 10/10 & 0/0 \\
256 & HOT & 7.209\rlap{$^{\dagger}$} & 1.355 & 3.802 & \otnum{1.53}{-}{03} & 0/10 & 0/0 \\
256 & HALO & 2.164\rlap{$^{\dagger}$} & 1.165 & 1.342 & \otnum{1.95}{-}{05} & 3/10 & 0/0 \\
256 & HELLO & 8.454 & 1.499 & 2.120 & \otnum{9.82}{-}{06} & 10/10 & 0/0 \\[5pt]
512 & BADMM & 1.872 & 1.991 & 1.210 & \otnum{9.49}{-}{06} & 10/10 & 0/0 \\
512 & HOT & 57.768\rlap{$^{\dagger}$} & 4.887 & 23.070 & \otnum{1.30}{-}{03} & 0/10 & 0/0 \\
512 & HALO & 11.682\rlap{$^{\dagger}$} & 1.923 & 2.556 & \otnum{1.38}{-}{05} & 6/10 & 0/0 \\
512 & HELLO & 158.148 & 2.694 & 2.729 & \otnum{7.71}{-}{06} & 10/10 & 0/0 \\
\addlinespace[5pt]
1024 & BADMM & 11.253 & 2.377 & 1.491 & \otnum{9.57}{-}{06} & 1/1 & 0/0 \\
1024 & HOT & ML & \textendash & \textendash & \textendash & \textendash & \textendash \\
1024 & HALO & 145.648 & 5.847 & 5.841 & \otnum{1.68}{-}{06} & 1/1 & 0/0 \\
1024 & HELLO & TL & \textendash & \textendash & \textendash & 0/1 & 1/0 \\[5pt]
2048 & BADMM & 98.683 & 3.076 & 2.585 & \otnum{9.59}{-}{06} & 1/1 & 0/0 \\
2048 & HOT & ML & \textendash & \textendash & \textendash & \textendash & \textendash \\
2048 & HALO & 2346.401 & 22.187 & 16.544 & \otnum{1.47}{-}{07} & 1/1 & 0/0 \\
2048 & HELLO & TL & \textendash & \textendash & \textendash & \textendash & \textendash \\
\end{longtable}
\end{footnotesize}
\endgroup}

\newcommand{\OTCPUSelectedTable}{%
\begingroup
\begin{footnotesize}
\begin{longtable}[c]{llrrrrr}
\caption{CPU results in the generic-cost-matrix setting.\label{tab:ot-cpu-selected}} \\
\toprule
$L$ & Solver & Time (s) & Memory (GiB) & $\eta_{\max}$ & Acc./Run & TL/ML \\
\midrule
\endfirsthead
\multicolumn{7}{c}{Table \thetable\ continued}\\
\toprule
$L$ & Solver & Time (s) & Memory (GiB) & $\eta_{\max}$ & Acc./Run & TL/ML \\
\midrule
\endhead
\bottomrule
\endfoot
64 & BADMM & 30.669 & 0.615 & \otnum{9.07}{-}{06} & 10/10 & 0/0 \\
64 & POT-EMD & 16.120 & 0.530 & \otnum{8.34}{-}{13} & 10/10 & 0/0 \\
64 & SqSN & 50.792 & 2.463 & \otnum{7.62}{-}{06} & 9/10 & 0/0 \\
64 & CPLEX-Net & 47.399 & 3.047 & \otnum{2.53}{-}{17} & 10/10 & 0/0 \\[5pt]
128 & BADMM & 528.449 & 2.263 & \otnum{8.73}{-}{06} & 10/10 & 0/0 \\
128 & POT-EMD & 1099.711 & 7.650 & \otnum{1.35}{-}{11} & 10/10 & 0/0 \\
128 & SqSN & 1554.204 & 33.673 & \otnum{8.33}{-}{06} & 10/10 & 0/0 \\
128 & CPLEX-Net & 2136.888 & 46.867 & \otnum{2.70}{-}{17} & 8/10 & 2/0 \\
\end{longtable}
\end{footnotesize}
\endgroup}

\newcommand{\OTGPUSelectedTable}{%
\begingroup
\begin{footnotesize}
\begin{longtable}[c]{llrrrrrr}
\caption{GPU results in the generic-cost-matrix setting.\label{tab:ot-gpu-selected}} \\
\toprule
$L$ & Solver & Time (s) & Host (GiB) & GPU (GiB) & $\eta_{\max}$ & Acc./Run & TL/ML \\
\midrule
\endfirsthead
\multicolumn{8}{c}{Table \thetable\ continued}\\
\toprule
$L$ & Solver & Time (s) & Host (GiB) & GPU (GiB) & $\eta_{\max}$ & Acc./Run & TL/ML \\
\midrule
\endhead
\bottomrule
\endfoot
64 & BADMM & 0.131 & 1.807 & 1.220 & \otnum{9.07}{-}{06} & 10/10 & 0/0 \\
64 & PDOT & 219.652 & 2.174 & 26.915 & \otnum{7.31}{-}{06} & 10/10 & 0/0 \\[5pt]
128 & BADMM & 2.018 & 3.513 & 2.982 & \otnum{8.73}{-}{06} & 10/10 & 0/0 \\
128 & PDOT & 642.218 & 5.866 & 37.376 & \otnum{8.15}{-}{06} & 7/10 & 3/0 \\[5pt]
256 & BADMM & 27.006 & 29.868 & 31.320 & \otnum{9.01}{-}{06} & 10/10 & 0/0 \\
256 & PDOT & ML & \textendash & \textendash & \textendash & \textendash & \textendash \\
\end{longtable}
\end{footnotesize}
\endgroup}

{\color{black}
\subsection{CPU experiments}
We first compare all methods in the generic-cost-matrix setting
at $L=64$ on the WhiteNoise instance in
Table~\ref{tab:ot-cpu-general}. BADMM
requires less computational time and memory than CPLEX-Net and SqSN
while meeting the accuracy criterion. POT-EMD has the shortest
computational time on this instance. The remaining methods either
require substantially more time or fail to meet the accuracy criterion within the
time limit.

\OTCPUGeneralTable

Based on these results, we compare BADMM with POT-EMD, SqSN and
CPLEX-Net on more DOTmark instances in Table~\ref{tab:ot-cpu-selected}.
For the DOTmark summary tables, we report the mean values of
computational time, sampled peak memory and $\eta_{\max}$ over returned
solutions, excluding TL and ML. Returned solutions that do not meet
the accuracy criterion are included. The advantage of BADMM becomes
more pronounced at the higher resolution in
Table~\ref{tab:ot-cpu-selected}. At $L=128$, BADMM meets the accuracy
criterion on all ten instances, with about half the mean computational
time and less than one third of the mean memory required by POT-EMD.
BADMM also has lower mean computational time and memory requirements
than SqSN and CPLEX-Net.

\OTCPUSelectedTable

The structured matrix-vector implementation 
in Section~\ref{sec:ot-structured-products}
substantially reduces the
computational time of BADMM in Table~\ref{tab:ot-cpu-structured}.
Compared with the generic-cost-matrix implementation on the same ten
instances, the mean computational time is reduced by factors of
approximately 23 and 54 at resolutions 64 and 128, respectively.
These gains are obtained by evaluating the matrix-vector products using the
separable representation in~\eqref{eq:ot-separable-kernel}.
BADMM meets the accuracy criterion on all reported instances through
resolution 512. It also requires less mean computational time than HOT,
which also exploits the separable squared Euclidean cost on the
two-dimensional grid.
At resolution 64, both methods meet the accuracy criterion on all ten
instances, and BADMM is approximately 28 times faster than HOT in terms
of mean computational time. HOT reaches the time limit on one instance
at resolution 128 and on all ten instances at resolutions 256 and 512.

\OTCPUStructuredTable

\subsection{GPU experiments}
We first compare all methods in the generic-cost-matrix setting
at $L=64$ on the WhiteNoise instance in Table~\ref{tab:ot-gpu-general}.
BADMM has the shortest computational time among the methods
that meet the accuracy criterion.
Compared with PDOT, BADMM's computational time and GPU memory requirement
are reduced by factors of approximately 49 and 20, respectively.
BADMM and PDOT are the only methods that meet the criterion on this
WhiteNoise instance at $L=64$.

\OTGPUGeneralTable

Based on these results, we compare BADMM with PDOT on more
DOTmark instances in Table~\ref{tab:ot-gpu-selected}. The results show
a substantial computational advantage for BADMM. At resolution 64,
BADMM requires over three orders of magnitude less mean
computational time and less than 5\% of the mean GPU memory
required by PDOT, with both methods meeting the accuracy criterion on
all ten instances. At resolution 128, BADMM meets the criterion on all
ten instances, whereas PDOT reaches the time limit on three instances.
BADMM also meets the criterion on all ten instances at resolution 256.
Comparing Tables~\ref{tab:ot-cpu-selected} and~\ref{tab:ot-gpu-selected},
the GPU implementation of BADMM reduces the mean computational time
at resolution 128 by a factor of approximately 260 on the reported
CPU and GPU configurations.

\OTGPUSelectedTable

Table~\ref{tab:ot-gpu-structured} compares BADMM with HOT, HALO and HELLO  using the two-dimensional kernel
setting on a GPU platform.
BADMM combines low computational time and GPU memory
requirements with consistent attainment of the accuracy criterion in
Table~\ref{tab:ot-gpu-structured}. BADMM and HELLO meet the criterion on
all ten DOTmark instances at resolutions 256 and 512. Compared with
HELLO, BADMM reduces the mean computational time by factors of
approximately 22 and 84, respectively, and also requires less GPU
memory. Thus, its computational advantage is larger at the higher
tested resolution. HALO meets the criterion on only some instances,
while HOT does not meet it on these tests.

The DOMDEC results for the resolutions at $L=1024$ and
$L=2048$ further demonstrate the ability of the
structured BADMM implementation to handle larger problems. BADMM meets
the accuracy criterion at resolutions 1024 and 2048 with low GPU memory
requirements. Its computational times are approximately $1/13$ and
$1/24$ of those of HALO, respectively. Both methods meet the criterion,
with HALO attaining smaller residuals. HOT exceeds the available GPU
memory.

\OTGPUStructuredTable

The detailed results, including iteration counts and the
three residuals, are given in Appendix~\ref{app:ot-computational-details}.
}

\let\otclarify\otsectionfiveclarify

\section{Conclusions}\label{sec:ot-conclusion}
We have established convergence of entropy Bregman ADMM for
 general discrete OT problems. With a strictly positive feasible primal
initialization, a zero initial multiplier, any fixed multiplier stepsize
$s\in(0,2)$, and a sufficiently large fixed penalty, the full primal--dual
sequence converges $R$-linearly. This result does not require a unique
or strictly positive OT optimal solution. Moreover, the entropy Bregman
projection characterization determines the primal limit selected by the
initial point.

The CPU and GPU experiments demonstrate the computational
efficiency of BADMM on the tested problems. For separable costs on
two-dimensional grids, the structured implementation substantially
reduces computational time and memory requirements and enables the
solution of larger problems. Further convergence analysis of adaptive
penalty updates and sparse implementations is left for future work.

\section*{Declaration of generative AI and AI-assisted technologies}
During the preparation of this work, the authors used OpenAI Codex
to assist with drafting and revising the manuscript, checking mathematical
arguments, and developing numerical code. The authors fully reviewed and edited
the resulting material and take full responsibility for the content of
this work.

\begingroup
\expandafter\def\csname otnewbib@xia2026halo\endcsname{1}
\expandafter\def\csname otnewbib@xia2026hello\endcsname{1}
\expandafter\def\csname otnewbib@wangcudabadmm\endcsname{1}
\expandafter\def\csname otnewbib@cupdlpxsoftware\endcsname{1}
\expandafter\def\csname otnewbib@cuoptsoftware\endcsname{1}
\expandafter\def\csname otnewbib@domdecexamples\endcsname{1}

\expandafter\def\csname otbibcorrected@khoo2025bregman\endcsname{1}
\expandafter\def\csname otbibcorrected@sharify2013scaling\endcsname{1}
\expandafter\def\csname otbibcorrected@chizat2024annealed\endcsname{1}
\expandafter\def\csname otbibcorrected@chen2024spurious\endcsname{1}
\expandafter\def\csname otbibcorrected@ding2026nonkkt\endcsname{1}
\expandafter\def\csname otbibcorrected@gao2026sinkhorn\endcsname{1}
\expandafter\def\csname otbibcorrected@lu2024pdot\endcsname{1}
\expandafter\def\csname otbibcorrected@sun2026multiscale\endcsname{1}
\expandafter\def\csname otbibcorrected@ma2025bdrs\endcsname{1}
\expandafter\def\csname otbibcorrected@xia2026hello\endcsname{1}
\expandafter\def\csname otbibcorrected@lu2025cupdlpx\endcsname{1}
\expandafter\def\csname otbibcorrected@zanetti2025hipo\endcsname{1}
\expandafter\def\csname otbibcorrected@ma2026scaling\endcsname{1}
\expandafter\def\csname otbibcorrected@ma2026failure\endcsname{1}

\expandafter\def\csname otbibcorrected@gurobimanual\endcsname{1}
\expandafter\def\csname otbibcorrected@ibmcplexmanual\endcsname{1}
\expandafter\def\csname otbibcorrected@cuoptsoftware\endcsname{1}

\expandafter\def\csname otbibcorrected@bonafini2021domain\endcsname{1}

\let\otoriginalbibitem\bibitem
\renewcommand{\bibitem}[1]{%
  \color{black}
  \otoriginalbibitem{#1}}
\bibliographystyle{plain}
\bibliography{references}

\endgroup

\appendix
\section{Support identification}\label{sec:ot-support-identification}
The BADMM iterates remain positive even when the optimal
solutions are sparse. 
To reduce the number of variables,
we derive a test that identifies variables equal
to zero in every optimal solution. 
Fixing these variables at zero preserves
the full optimal solution set. 
Variable fixing and screening are also
used in sparse quadratic programming~\cite{hou2026scsdp,tan2025screening}.

We first construct a feasible primal--dual pair from $(Y^k,Z^k)$.
Since $(Y^k)^\top e_m=b$, we obtain a feasible matrix $V^k$ by correcting
the row marginal and adding a rank-one matrix. The rounding construction
in~\cite[Algorithm~2 and Lemma~7]{altschuler2017nearlinear} reduces to
\begin{equation}\label{eq:ot-repair}
\begin{aligned}
W_{ij}&\coleq Y^k_{ij}\min\{1,a_i/(Y^ke_n)_i\},\\
\Delta a&\coleq a-We_n,\qquad \Delta b\coleq b-W^\top e_m,
\qquad \tau_k\coleq e_m^\top\Delta a,\\
V^k&\coleq
\begin{cases}W+\Delta a(\Delta b)^\top/\tau_k,&\tau_k>0,\\
W,&\tau_k=0.
\end{cases}
\end{aligned}
\end{equation}
Since $0\leq W\leq Y^k$ and $We_n\leq a$, we have
$\Delta a,\Delta b\geq0$. Their sums are equal because
$e_m^\top a=e_n^\top b=1$, so
$e_m^\top\Delta a=e_n^\top\Delta b=\tau_k$.
For $\tau_k>0$, the added rank-one matrix has row and column sums
$\Delta a$ and $\Delta b$, respectively. If $\tau_k=0$, both
corrections vanish. Hence $V^k\in\Pi$ in either case.

It is readily shown that $\|Y^ke_n-a\|_1 = e_m^\top\Delta a + e_m^\top (Y^k-W)e_n=\tau_k + e_n^\top ((Y^k)^\top e_m - W^\top e_m)= \tau_k + e_n^\top (b-W^\top e_m)=2\tau_k.$ 
The nonnegative matrices $Y^k-W$ and $V^k-W$ each have entry sum
$\tau_k=\|Y^ke_n-a\|_1/2$, giving
$\|V^k-Y^k\|_1\leq\|Y^ke_n-a\|_1$.
We obtain $\widehat\beta^k$ from the column means of $-Z^k$,
consistent with the limiting relation $Z_\infty=-e_m\beta_\infty^\top$.
We then define $\widehat\alpha^k$ by row minima so that the pair is
 dual feasible:
\begin{equation}\label{eq:ot-support-labels}
\begin{gathered}
\widehat\beta_j^k\coleq-\frac1m\sum_iZ^k_{ij},\qquad
\widehat\alpha_i^k\coleq\min_j(C_{ij}-\widehat\beta_j^k),\\
\Gamma^k_{ij}\coleq C_{ij}-\widehat\alpha_i^k-\widehat\beta_j^k,\qquad
\widehat G_k\coleq\langle C,V^k\rangle
 -a^\top\widehat\alpha^k-b^\top\widehat\beta^k.
\end{gathered}
\end{equation}
The row minima ensure $\Gamma^k\geq0$, and weak duality gives
$\widehat G_k\geq0$. 

Let $\delta_{\rm v}>0$ be a lower bound on every positive entry of
every vertex of $\Pi$. The next algorithm retains the variables not
excluded by the dual slack and gap.
\begin{algorithm}[H]
\caption{Support identification from a Bregman ADMM iterate}\label{alg:ot-identification}
\textbf{Input:} Current iterates $(Y^k,Z^k)$ and a valid bound $\delta_{\rm v}>0$.
\begin{enumerate}[leftmargin=*,itemsep=0pt,topsep=4pt]
\item Form $V^k\in\Pi$ by~\eqref{eq:ot-repair}.
\item Compute the dual feasible pair, slack, and gap
in~\eqref{eq:ot-support-labels}.
\item Return
\begin{equation}\label{eq:ot-retained-set}
T_k\coleq\{(i,j):\Gamma^k_{ij}\leq\widehat G_k/\delta_{\rm v}\}.
\end{equation}
\end{enumerate}
\end{algorithm}
Here $T_k$ is the set of indices retained by the test.
The following proposition shows that variables outside $T_k$ can be
fixed at zero without removing any optimal solution.
Given a valid bound $\delta_{\rm v}>0$, each identification
check requires $\mathcal{O}(mn)$ operations and may be applied at
selected iterations.

We next describe how to choose $\delta_{\rm v}$.
If the marginals are integer multiples of $1/d$, use $\delta_{\rm v}\coleq1/d$.
For uniform marginals, $d\coleq\operatorname{lcm}(m,n)$ is valid.
For general marginals, a valid choice is
\begin{equation}\label{eq:ot-vertex-mass}
\delta_{\rm v}\coleq
\min_{\substack{I\subseteq\{1,\ldots,m\},\ J\subseteq\{1,\ldots,n\}\\
 \sum_{i\in I}a_i\ne\sum_{j\in J}b_j}}
 \left|\sum_{i\in I}a_i-\sum_{j\in J}b_j\right|.
\end{equation}
This expression establishes the existence of a positive
bound, but its direct evaluation may be expensive.

We prove safe exclusion for OT and, using strict complementarity,
finite identification of $S$.
\begin{proposition}[Safe exclusion and finite support identification]
Let $V\in\Pi$, $\alpha_i+\beta_j\leq C_{ij}$, and define
$G\coleq\langle C,V\rangle-a^\top\alpha-b^\top\beta$ and
$\Gamma_{ij}\coleq C_{ij}-\alpha_i-\beta_j$.
Let $\delta_{\rm v}>0$ be a lower bound on every positive
entry of every vertex of $\Pi$. The following statements hold.
\begin{enumerate}[label=(\roman*),ref=(\roman*),leftmargin=*]
\item (Safe exclusion.) We have
\begin{equation}\label{eq:ot-safe-exclusion}
\Gamma_{ij}>G/\delta_{\rm v}\quad\Longrightarrow\quad
P_{ij}=0\quad\text{for every }P\in\mathcal O.
\end{equation}
Consequently, the sets $T_k$ in~\eqref{eq:ot-retained-set}
satisfy $S\subseteq T_k$ at every iteration.
\item (Finite identification.) For the sequence generated
by Algorithm~\ref{alg:ot} under the assumptions of
Theorem~\ref{thm:ot-global}, $T_k=S$ for all sufficiently large $k$.
\end{enumerate}
\end{proposition}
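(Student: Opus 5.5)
For (i), I will start from weak duality in the form of a slack identity. For every $P\in\Pi$,
\begin{equation*}
\langle C,P\rangle-a^\top\alpha-b^\top\beta=\langle\Gamma,P\rangle,
\end{equation*}
since $Pe_n=a$ and $P^\top e_m=b$.

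Take $P\in\mathcal O$. Then $\langle C,P\rangle\leq\langle C,V\rangle$, so $\langle\Gamma,P\rangle\leq G$.

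To bring in $\delta_{\rm v}$, I need a single entry of $P$ to be large whenever it is nonzero. Suppose $P_{ij}>0$ for some $P\in\mathcal O$. The set $\mathcal O$ is a face of $\Pi$, so $P$ is a convex combination of vertices of $\mathcal O$, and these are also vertices of $\Pi$. Some vertex $\bar P$ in the combination has $\bar P_{ij}>0$, and therefore $\bar P_{ij}\geq\delta_{\rm v}$.

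The vertex $\bar P$ is optimal, and $\Gamma\geq0$. Hence
\begin{equation*}
\delta_{\rm v}\Gamma_{ij}\leq\bar P_{ij}\Gamma_{ij}\leq\langle\Gamma,\bar P\rangle\leq G,
\end{equation*}
which contradicts $\Gamma_{ij}>G/\delta_{\rm v}$. This proves the exclusion. Applied to $V^k$ and $(\widehat\alpha^k,\widehat\beta^k)$, which are feasible by construction, it shows that every $(i,j)\in S$ satisfies $\Gamma^k_{ij}\leq\widehat G_k/\delta_{\rm v}$. Thus $S\subseteq T_k$.

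For (ii), I will use Theorem~\ref{thm:ot-global}: $Y^k\to P_*$ and $Z^k\to Z_\infty=-e_m\beta_\infty^\top$, with the OT pair $(\alpha_\infty,\beta_\infty)$ strictly complementary. The limits go in three steps.
\begin{enumerate}[label=(\alph*),leftmargin=*]
\item Since $Y^ke_n\to a$, the repair satisfies $\|V^k-Y^k\|_1\leq\|Y^ke_n-a\|_1\to0$. Hence $V^k\to P_*$.
\item By definition $\widehat\beta^k\to\beta_\infty$. By continuity of the row minimum, $\widehat\alpha^k\to\widehat\alpha_i\coleq\min_j(C_{ij}-(\beta_\infty)_j)$.
\item Each row $i$ contains some $(i,j)\in S$, where $C_{ij}=(\alpha_\infty)_i+(\beta_\infty)_j$ holds with equality. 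Together with $\Gamma_\infty\geq0$, this gives $\widehat\alpha=\alpha_\infty$. Consequently $\Gamma^k\to\Gamma_\infty$, and $\widehat G_k\to\langle C,P_*\rangle-a^\top\alpha_\infty-b^\top\beta_\infty=0$, where the last equality is strong duality via complementarity.
\end{enumerate}

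Now Lemma~\ref{lem:ot-selected-strict} gives $\min_{S^c}(\Gamma_\infty)_{ij}=:\gamma>0$ when $S^c\neq\varnothing$. For large $k$ we then have $\Gamma^k_{ij}>\gamma/2>\widehat G_k/\delta_{\rm v}$ on $S^c$, so $T_k\subseteq S$. Combined with (i), this yields $T_k=S$.

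The main obstacle is the vertex step in (i). The bound $\delta_{\rm v}$ is stated only for vertices, but I need it to control an arbitrary optimal $P$. The face argument above handles this: it is enough that some optimal vertex $\bar P$ has $\bar P_{ij}>0$, so $\delta_{\rm v}$ never has to bound an entry of a non-vertex optimal solution.
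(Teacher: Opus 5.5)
Your proposal is correct and follows the paper's proof essentially step for step. For safe exclusion you both use the same argument: pass to an optimal vertex with a positive $(i,j)$ entry, then apply $\delta_{\rm v}\Gamma_{ij}\leq\langle\Gamma,U\rangle\leq G$. For finite identification you both use $V^k\to P_*$, $\widehat\beta^k\to\beta_\infty$, $\widehat\alpha^k\to\alpha_\infty$ via the row minima and complementarity, and $\widehat G_k\to0$, combined with strict complementarity of $\Gamma_\infty$. The only extra material in the paper is the forest argument showing that the proposed choices of $\delta_{\rm v}$ are valid; the statement assumes such a bound is given, so your proof does not need it.
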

\begin{proof}
\indent\textup{(Positive vertex entries.)} We first verify
 the lower bounds used in~\eqref{eq:ot-safe-exclusion}.
For a vertex $U$ of $\Pi$, its bipartite support graph in
Definition~\ref{def:ot-support-graph} is a forest
\cite[Lemma~2.9]{deloera2014transportation}.
Fix an edge $(i,j)$ with $U_{ij}>0$ and delete it from its tree.
Let $I,J$ be the row and column index sets of either resulting part.
The deleted edge is the only positive entry connecting this part
to its complement. Subtracting the sums of the marginal equations
on $I,J$ cancels every internal entry and leaves either $U_{ij}$
or $-U_{ij}$. Hence
\begin{equation*}
U_{ij}=\left|\sum_{p\in I}a_p-\sum_{q\in J}b_q\right|>0.
\end{equation*}
The minimum in~\eqref{eq:ot-vertex-mass} is over a nonempty finite
set of positive differences, so it is positive and bounds every
positive vertex entry from below.
If the marginals are integer multiples of $1/d$, each such
difference is a positive integer multiple of $1/d$, which proves
the stated rational bound.

\textup{(Safe exclusion.)} We prove (i) by contradiction.
Suppose $\Gamma_{ij}>G/\delta_{\rm v}$ but an optimal solution
is positive at $(i,j)$.
The optimal set is a face of the compact polytope $\Pi$ and is the
convex hull of its vertices. Therefore it has an optimal vertex
$U$ with $U_{ij}>0$ and hence $U_{ij}\geq\delta_{\rm v}$.
Dual feasibility gives $\Gamma\geq0$, while optimality of $U$ and
feasibility of $V$ give $\langle C,U\rangle\leq\langle C,V\rangle$.
Using the marginal constraints of $U$ in the middle equality,
we obtain
\begin{equation*}
\delta_{\rm v}\Gamma_{ij}\leq\langle\Gamma,U\rangle
=\langle C,U\rangle-a^\top\alpha-b^\top\beta
\leq\langle C,V\rangle-a^\top\alpha-b^\top\beta=G.
\end{equation*}
This contradicts $\Gamma_{ij}>G/\delta_{\rm v}$ and
proves~\eqref{eq:ot-safe-exclusion}.
Applying~\eqref{eq:ot-safe-exclusion} to $V^k$ from~\eqref{eq:ot-repair}
and $(\widehat\alpha^k,\widehat\beta^k)$ from~\eqref{eq:ot-support-labels} gives $S\subseteq T_k$ for every $k$.

\textup{(Finite identification.)} To prove (ii), consider
 the sequence generated by Algorithm~\ref{alg:ot} under the assumptions
of Theorem~\ref{thm:ot-global}. The associated primal--dual variables
converge to the strictly complementary solution
$(P_*,\alpha_\infty,\beta_\infty)$.
The repair bound following~\eqref{eq:ot-repair} gives
\begin{equation*}
\|V^k-P_*\|_1\leq\|V^k-Y^k\|_1+\|Y^k-P_*\|_1
\leq\|Y^ke_n-a\|_1+\|Y^k-P_*\|_1\longrightarrow0.
\end{equation*}
Since $Z^k\to-e_m\beta_\infty^\top$, taking column means in
\eqref{eq:ot-support-labels} gives $\widehat\beta^k\to\beta_\infty$.
Dual feasibility implies $C_{ij}-(\beta_\infty)_j\geq(\alpha_\infty)_i$.
Every row of $P_*$ contains a positive entry, where complementarity
makes this inequality an equality. Thus
$\min_j(C_{ij}-(\beta_\infty)_j)=(\alpha_\infty)_i$.
Continuity of the minimum of finitely many functions now gives
$\widehat\alpha^k\to\alpha_\infty$, and the remaining definitions
in~\eqref{eq:ot-support-labels} give
$\Gamma^k\to\Gamma_\infty$ and $\widehat G_k\to0$.
If $S^c\ne\varnothing$, strict complementarity gives
$\gamma\coleq\min_{S^c}(\Gamma_\infty)_{ij}>0$.
For all sufficiently large $k$, we have
$\Gamma^k_{ij}>\gamma/2$ on $S^c$ and
$\widehat G_k/\delta_{\rm v}<\gamma/2$.
Definition~\eqref{eq:ot-retained-set} then excludes every entry
outside $S$. Combining this with $S\subseteq T_k$ proves $T_k=S$.
If $S^c=\varnothing$, the inclusion already gives the conclusion.
\end{proof}

\begin{remark}[Variable fixing and sparse computation]
Fixing variables outside $T_k$ at zero preserves all optimal
solutions and reduces the number of primal variables from $mn$ to
$|T_k|$.
\end{remark}

\section{Detailed computational results}\label{app:ot-computational-details}
Tables~\ref{tab:ot-detail-cpu-general}--\ref{tab:ot-detail-gpu-structured}
supplement the results in Section~\ref{sec:ot-numerical} with iteration
counts and the three residuals. For the DOTmark summary rows, each
quantity is averaged over the same instances as the corresponding
main-table row, excluding TL and ML. The all-method comparisons and
DOMDEC rows report individual instances. ``Iter'' denotes the iteration
count reported by each solver or its mean. Iteration counts from
different methods are not directly comparable. The dagger, TL and ML
have the meanings specified in Section~\ref{sec:ot-numerical}.
Unavailable entries are marked by a dash.

\providecommand{\otnum}[3]{\mbox{#1e\makebox[0.65em][c]{\ensuremath{#2}}#3}}
\begingroup
\setlength{\LTpre}{6pt}
\setlength{\LTpost}{6pt}
\begin{footnotesize}
\begin{longtable}[c]{llrrrrr}
\caption{Detailed CPU results in the generic-cost-matrix setting at $L=64$.\label{tab:ot-detail-cpu-general}} \\
\toprule
$L$ & Solver & Time (s) & Iter & $\eta_{\rm p}$ & $\eta_{\rm d}$ & $\eta_{\rm g}$ \\
\midrule
\endfirsthead
\multicolumn{7}{c}{Table \thetable\ continued}\\
\toprule
$L$ & Solver & Time (s) & Iter & $\eta_{\rm p}$ & $\eta_{\rm d}$ & $\eta_{\rm g}$ \\
\midrule
\endhead
\midrule
\multicolumn{7}{r}{Continued on next page}\\
\endfoot
\bottomrule
\endlastfoot
64 & BADMM & 25.023 & 95 & \otnum{2.47}{-}{06} & \otnum{9.68}{-}{06} & \otnum{2.09}{-}{06} \\
64 & POT-EMD & 7.520 & \textendash & \otnum{1.30}{-}{17} & \otnum{2.32}{-}{14} & \otnum{1.90}{-}{13} \\
64 & CPLEX-Net & 55.890 & 990466 & \otnum{1.11}{-}{18} & \otnum{4.45}{-}{20} & \otnum{1.08}{-}{19} \\
64 & SqSN & 62.766 & 44 & \otnum{4.86}{-}{06} & \otnum{1.04}{-}{06} & \otnum{7.74}{-}{06} \\
64 & Gurobi-Bar & 336.696 & 13 & \otnum{6.00}{-}{14} & \otnum{0.00}{+}{00} & \otnum{4.43}{-}{06} \\
64 & CPLEX-Bar & 655.793\rlap{$^{\dagger}$} & 23 & \otnum{6.86}{-}{07} & \otnum{0.00}{+}{00} & \otnum{2.28}{-}{05} \\
64 & HPR & TL & \textendash & \textendash & \textendash & \textendash \\
64 & PDLP & TL & \textendash & \textendash & \textendash & \textendash \\
64 & Gurobi-PDHG & TL & \textendash & \textendash & \textendash & \textendash \\
64 & HiGHS-Bar & TL & \textendash & \textendash & \textendash & \textendash \\
\end{longtable}
\end{footnotesize}

\begin{footnotesize}
\begin{longtable}[c]{llrrrrr}
\caption{Detailed CPU results for selected methods in the generic-cost-matrix setting.\label{tab:ot-detail-cpu-selected}} \\
\toprule
$L$ & Solver & Time (s) & Iter & $\eta_{\rm p}$ & $\eta_{\rm d}$ & $\eta_{\rm g}$ \\
\midrule
\endfirsthead
\multicolumn{7}{c}{Table \thetable\ continued}\\
\toprule
$L$ & Solver & Time (s) & Iter & $\eta_{\rm p}$ & $\eta_{\rm d}$ & $\eta_{\rm g}$ \\
\midrule
\endhead
\midrule
\multicolumn{7}{r}{Continued on next page}\\
\endfoot
\bottomrule
\endlastfoot
64 & BADMM & 30.669 & 136.0 & \otnum{5.18}{-}{06} & \otnum{8.50}{-}{06} & \otnum{4.31}{-}{06} \\
64 & POT-EMD & 16.120 & \textendash & \otnum{5.42}{-}{17} & \otnum{2.84}{-}{13} & \otnum{8.34}{-}{13} \\
64 & SqSN & 50.792 & 39.7 & \otnum{4.88}{-}{06} & \otnum{9.22}{-}{07} & \otnum{6.47}{-}{06} \\
64 & CPLEX-Net & 47.399 & 829844.1 & \otnum{1.92}{-}{17} & \otnum{7.34}{-}{19} & \otnum{1.34}{-}{17} \\[5pt]
128 & BADMM & 528.449 & 128.0 & \otnum{2.22}{-}{06} & \otnum{7.24}{-}{06} & \otnum{4.25}{-}{06} \\
128 & POT-EMD & 1099.711 & \textendash & \otnum{5.01}{-}{17} & \otnum{2.71}{-}{12} & \otnum{1.30}{-}{11} \\
128 & SqSN & 1554.204 & 70.8 & \otnum{1.11}{-}{06} & \otnum{6.07}{-}{07} & \otnum{8.33}{-}{06} \\
128 & CPLEX-Net & 2136.888 & 10168026.1 & \otnum{2.83}{-}{18} & \otnum{3.68}{-}{19} & \otnum{2.66}{-}{17} \\
\end{longtable}
\end{footnotesize}

\begin{footnotesize}
\begin{longtable}[c]{llrrrrr}
\caption{Detailed CPU results in the two-dimensional-kernel setting.\label{tab:ot-detail-cpu-structured}} \\
\toprule
$L$ & Solver & Time (s) & Iter & $\eta_{\rm p}$ & $\eta_{\rm d}$ & $\eta_{\rm g}$ \\
\midrule
\endfirsthead
\multicolumn{7}{c}{Table \thetable\ continued}\\
\toprule
$L$ & Solver & Time (s) & Iter & $\eta_{\rm p}$ & $\eta_{\rm d}$ & $\eta_{\rm g}$ \\
\midrule
\endhead
\midrule
\multicolumn{7}{r}{Continued on next page}\\
\endfoot
\bottomrule
\endlastfoot
64 & BADMM & 1.328 & 136.0 & \otnum{5.18}{-}{06} & \otnum{8.50}{-}{06} & \otnum{4.31}{-}{06} \\
64 & HOT & 36.803 & 2440.0 & \otnum{3.60}{-}{06} & \otnum{5.51}{-}{08} & \otnum{5.63}{-}{06} \\[5pt]
128 & BADMM & 9.769 & 128.0 & \otnum{2.22}{-}{06} & \otnum{7.24}{-}{06} & \otnum{4.25}{-}{06} \\
128 & HOT & 631.115 & 3988.9 & \otnum{3.06}{-}{06} & \otnum{1.71}{-}{08} & \otnum{8.61}{-}{06} \\[5pt]
256 & BADMM & 89.237 & 106.5 & \otnum{1.20}{-}{06} & \otnum{7.54}{-}{06} & \otnum{4.99}{-}{06} \\
256 & HOT & TL & \textendash & \textendash & \textendash & \textendash \\[5pt]
512 & BADMM & 708.348 & 98.0 & \otnum{1.04}{-}{06} & \otnum{8.88}{-}{06} & \otnum{3.85}{-}{06} \\
512 & HOT & TL & \textendash & \textendash & \textendash & \textendash \\
\end{longtable}
\end{footnotesize}

\begin{footnotesize}
\begin{longtable}[c]{llrrrrr}
\caption{Detailed GPU results in the generic-cost-matrix setting at $L=64$.\label{tab:ot-detail-gpu-general}} \\
\toprule
$L$ & Solver & Time (s) & Iter & $\eta_{\rm p}$ & $\eta_{\rm d}$ & $\eta_{\rm g}$ \\
\midrule
\endfirsthead
\multicolumn{7}{c}{Table \thetable\ continued}\\
\toprule
$L$ & Solver & Time (s) & Iter & $\eta_{\rm p}$ & $\eta_{\rm d}$ & $\eta_{\rm g}$ \\
\midrule
\endhead
\midrule
\multicolumn{7}{r}{Continued on next page}\\
\endfoot
\bottomrule
\endlastfoot
64 & BADMM & 0.127 & 95 & \otnum{2.47}{-}{06} & \otnum{9.68}{-}{06} & \otnum{2.09}{-}{06} \\
64 & PDOT & 6.241 & 1024 & \otnum{6.39}{-}{06} & \otnum{3.80}{-}{07} & \otnum{1.02}{-}{06} \\
64 & HPR-LP-C & 27.553\rlap{$^{\dagger}$} & 13650 & \otnum{2.09}{-}{05} & \otnum{6.51}{-}{02} & \otnum{7.29}{-}{06} \\
64 & cuOpt-Bar & 35.472\rlap{$^{\dagger}$} & 46 & \otnum{3.01}{-}{06} & \otnum{4.36}{-}{20} & \otnum{2.38}{-}{03} \\
64 & cuPDLPx & 76.231\rlap{$^{\dagger}$} & 19400 & \otnum{1.66}{-}{05} & \otnum{4.12}{-}{08} & \otnum{1.16}{-}{05} \\
64 & Gurobi-PDHG & 106.283\rlap{$^{\dagger}$} & 19914 & \otnum{8.57}{-}{04} & \otnum{2.90}{-}{07} & \otnum{7.94}{-}{07} \\
64 & cuOpt-PDLP & 188.049\rlap{$^{\dagger}$} & 49600 & \otnum{1.54}{-}{05} & \otnum{3.19}{-}{06} & \otnum{5.36}{-}{06} \\
64 & W-BADMM & TL & \textendash & \textendash & \textendash & \textendash \\
\end{longtable}
\end{footnotesize}

\begin{footnotesize}
\begin{longtable}[c]{llrrrrr}
\caption{Detailed GPU results for selected methods in the generic-cost-matrix setting.\label{tab:ot-detail-gpu-selected}} \\
\toprule
$L$ & Solver & Time (s) & Iter & $\eta_{\rm p}$ & $\eta_{\rm d}$ & $\eta_{\rm g}$ \\
\midrule
\endfirsthead
\multicolumn{7}{c}{Table \thetable\ continued}\\
\toprule
$L$ & Solver & Time (s) & Iter & $\eta_{\rm p}$ & $\eta_{\rm d}$ & $\eta_{\rm g}$ \\
\midrule
\endhead
\midrule
\multicolumn{7}{r}{Continued on next page}\\
\endfoot
\bottomrule
\endlastfoot
64 & BADMM & 0.131 & 136.0 & \otnum{5.18}{-}{06} & \otnum{8.50}{-}{06} & \otnum{4.31}{-}{06} \\
64 & PDOT & 219.652 & 44595.2 & \otnum{2.70}{-}{06} & \otnum{2.28}{-}{06} & \otnum{5.96}{-}{06} \\[5pt]
128 & BADMM & 2.018 & 128.0 & \otnum{2.22}{-}{06} & \otnum{7.24}{-}{06} & \otnum{4.25}{-}{06} \\
128 & PDOT & 642.218 & 10093.7 & \otnum{4.73}{-}{06} & \otnum{1.99}{-}{06} & \otnum{7.54}{-}{06} \\[5pt]
256 & BADMM & 27.006 & 106.5 & \otnum{1.20}{-}{06} & \otnum{7.54}{-}{06} & \otnum{4.99}{-}{06} \\
256 & PDOT & ML & \textendash & \textendash & \textendash & \textendash \\
\end{longtable}
\end{footnotesize}

\begin{footnotesize}
\begin{longtable}[c]{llrrrrr}
\caption{Detailed GPU results in the two-dimensional-kernel setting.\label{tab:ot-detail-gpu-structured}} \\
\toprule
$L$ & Solver & Time (s) & Iter & $\eta_{\rm p}$ & $\eta_{\rm d}$ & $\eta_{\rm g}$ \\
\midrule
\endfirsthead
\multicolumn{7}{c}{Table \thetable\ continued}\\
\toprule
$L$ & Solver & Time (s) & Iter & $\eta_{\rm p}$ & $\eta_{\rm d}$ & $\eta_{\rm g}$ \\
\midrule
\endhead
\midrule
\multicolumn{7}{r}{Continued on next page}\\
\endfoot
\bottomrule
\endlastfoot
256 & BADMM & 0.379 & 106.5 & \otnum{1.20}{-}{06} & \otnum{7.54}{-}{06} & \otnum{4.99}{-}{06} \\
256 & HOT & 7.209\rlap{$^{\dagger}$} & 510.0 & \otnum{1.69}{-}{04} & \otnum{7.21}{-}{07} & \otnum{1.53}{-}{03} \\
256 & HALO & 2.164\rlap{$^{\dagger}$} & \textendash & \otnum{1.91}{-}{05} & \otnum{3.67}{-}{07} & \otnum{3.30}{-}{06} \\
256 & HELLO & 8.454 & \textendash & \otnum{9.14}{-}{06} & \otnum{1.87}{-}{06} & \otnum{2.43}{-}{06} \\[5pt]
512 & BADMM & 1.872 & 98.0 & \otnum{1.04}{-}{06} & \otnum{8.88}{-}{06} & \otnum{3.85}{-}{06} \\
512 & HOT & 57.768\rlap{$^{\dagger}$} & 490.0 & \otnum{8.05}{-}{05} & \otnum{4.13}{-}{07} & \otnum{1.30}{-}{03} \\
512 & HALO & 11.682\rlap{$^{\dagger}$} & \textendash & \otnum{1.31}{-}{05} & \otnum{4.74}{-}{07} & \otnum{3.26}{-}{06} \\
512 & HELLO & 158.148 & \textendash & \otnum{3.77}{-}{06} & \otnum{2.08}{-}{06} & \otnum{5.03}{-}{06} \\
\addlinespace[5pt]
1024 & BADMM & 11.253 & 95 & \otnum{3.55}{-}{07} & \otnum{9.57}{-}{06} & \otnum{4.77}{-}{06} \\
1024 & HOT & ML & \textendash & \textendash & \textendash & \textendash \\
1024 & HALO & 145.648 & \textendash & \otnum{9.64}{-}{07} & \otnum{2.30}{-}{09} & \otnum{1.68}{-}{06} \\
1024 & HELLO & TL & \textendash & \textendash & \textendash & \textendash \\[5pt]
2048 & BADMM & 98.683 & 95 & \otnum{1.77}{-}{07} & \otnum{9.59}{-}{06} & \otnum{4.75}{-}{06} \\
2048 & HOT & ML & \textendash & \textendash & \textendash & \textendash \\
2048 & HALO & 2346.401 & \textendash & \otnum{1.47}{-}{07} & \otnum{1.24}{-}{09} & \otnum{1.30}{-}{07} \\
2048 & HELLO & TL & \textendash & \textendash & \textendash & \textendash \\
\end{longtable}
\end{footnotesize}

\endgroup

\end{document}